\numberwithin{equation}{section}
\newcommand{\R}{{\mathbf{R}}}
\newcommand{\E}{{\mathbf{E}}}
\newcommand{\N}{{\mathbf{N}}}
\newcommand{\D}{{\mathcal{D}}}
\newcommand{\F}{{\mathcal{F}}} 
\renewcommand{\P}{{\mathbf{P}}} 
\renewcommand{\L}{{\mathcal{L}}}
\newcommand{\X}{{\mathcal{X}}}
\newcommand{\G}{{\mathcal{G}}}
\newcommand{\diff}[1]{\,\mathrm{d}#1}
\newcommand{\inner}[3][]{\langle#2,#3\rangle_{#1}}
\newcommand{\id}{\mathrm{id}}
\newcommand{\triple}{{\vert\kern-0.25ex\vert\kern-0.25ex\vert}}
\newcommand{\Eta}{\mathcal{H}}
\theoremstyle{plain}
\newtheorem{definition}{Definition}[section]
\newtheorem{theorem}[definition]{Theorem}
\newtheorem{lemma}[definition]{Lemma}
\newtheorem{assumption}[definition]{Assumption}
\theoremstyle{definition}
\newtheorem{remark}[definition]{Remark}
\begin{document}

\title[Backward Euler method for multi-valued SDE
]
{
Error estimates of\\
the backward Euler--Maruyama method for\\
multi-valued stochastic differential equations\\
}

\author[M.~Eisenmann]{Monika Eisenmann}
\address{Monika Eisenmann\\
Technische Universit\"at Berlin\\
Institut f\"ur Mathematik, Secr. MA 5--3\\
Stra\ss e des 17.~Juni 136\\
DE--10623 Berlin\\
Germany}
\email{meisenma@math.tu-berlin.de}

\author[M.~Kov\'acs]{Mih\'aly Kov\'acs}
\address{Mih\'aly Kov\'acs\\
Faculty of Information Technology and Bionics\\
P\'azm\'any P\'eter Catholic University\\
P.O. Box 278, Budapest\\
Hungary}
\email{kovacs.mihaly@itk.ppke.hu}

\author[R.~Kruse]{Raphael Kruse}
\address{Raphael Kruse\\
Technische Universit\"at Berlin\\
Institut f\"ur Mathematik, Secr. MA 5--3\\
Stra\ss e des 17.~Juni 136\\
DE--10623 Berlin\\
Germany}
\email{kruse@math.tu-berlin.de}

\author[S.~Larsson]{Stig Larsson}
\address{Stig Larsson\\
Department of Mathematical Sciences\\
Chalmers University of Technology and University of Gothenburg\\
SE--412 96 Gothenburg\\
Sweden}
\email{stig@chalmers.se}

\keywords{multi-valued stochastic differential equation, 
backward Euler--Maruyama method, strong convergence, stochastic gradient flow,
discontinuous drift, H\"older continuous drift} 
\subjclass[2010]{65C30, 60H10, 34A60}

\begin{abstract}
  In this paper, we derive error estimates of the backward Euler--Maruyama
  method applied to multi-valued stochastic differential equations. An important
  example of such an equation is a stochastic gradient flow whose associated
  potential is not continuously differentiable, but assumed to be convex.
  We show that the backward Euler--Maruyama method is well-defined and
  convergent of order at least $1/4$ with respect to the root-mean-square norm.
  Our error analysis relies on techniques for deterministic problems 
  developed in [Nochetto, Savar\'e, and Verdi, Comm.\ Pure Appl.\ Math., 2000]. 
  We verify that our setting applies to an overdamped Langevin equation
  with a discontinuous gradient and to a spatially semi-discrete approximation
  of the stochastic $p$-Laplace equation.
\end{abstract}

\maketitle

\section{Introduction}
\label{sec:intro}

In this paper, we investigate the numerical approximation of 
multi-valued stochastic differential equations (MSDE). An important example
of such equations is provided by stochastic gradient flows with a convex
potential. More precisely, let $T \in (0,\infty)$ and $(\Omega,\F, (\F_t)_{t \in [0,T]},
\P)$ be a filtered probability space 
satisfying the usual conditions. By $W \colon [0,T] \times \Omega \to \R^m$, $m
\in \N$, we denote a standard $(\F_t)_{t \in 
[0,T]}$-adapted Wiener process. For instance, let us consider the numerical
treatment of nonlinear, overdamped Langevin-type equations of the form
\begin{align}
  \label{eq:langevin}
  \begin{cases}
    \diff{X(t)} = - \nabla \Phi(X(t)) \diff{t}  + g_0 \diff{W(t)},
    \quad t \in (0,T],\\
    X(0) = X_0,
  \end{cases}
\end{align}
where $X_0 \in L^p(\Omega,\F_0,\P;\R^d)$, $p \in [2,\infty)$, $g_0 \in
\R^{d,m}$, and $\Phi \colon \R^d \to \R$ are given.
These equations have many important applications, for example, in Bayesian
statistics and molecular dynamics. We refer to \cite{durmus2016,
leimkuhler2016, lelievre2010, roberts1996, sabanis2018} and the references
therein. 

We recall that, if the gradient $\nabla \Phi$ is of superlinear
growth, then the classical forward Euler--Maruyama method is known to be 
divergent in
the strong and weak sense, see \cite{hutzenthaler2011}. This problem
can be circumvented by using modified versions of the explicit
Euler--Maruyama method based on techniques such as taming,
truncating, stopping, projecting, or adaptive strategies, cf.~\cite{beyn2016,
brosse2018, hutzenthaler2015, kelly2018, mao2016, sabanis2016}. 

In this paper, we take an alternative approach by considering
the backward Euler--Maruyama method. Our main motivation for considering 
this method lies in its good stability properties, which
allow its application to stiff problems arising, for instance, from the spatial
semi-discretization of stochastic partial differential equations.
Implicit methods have also been studied extensively in the context of
stochastic differential equations with superlinearly growing coefficients. For
example, see \cite{andersson2017, higham2002, hu1996, mao2013, mao2013a}.

The error analysis in the above mentioned papers on explicit and implicit
methods typically requires a certain
degree of smoothness of $\nabla \Phi$ such as local Lipschitz continuity. 
The purpose of this paper is to derive error estimates of the backward
Euler--Maruyama method for equations of the form \eqref{eq:langevin},
where the associated potential $\Phi \colon \R^d \to \R$ is not necessarily
continuously differentiable, but assumed to be convex. 

For the formulation of the numerical scheme, let $N \in \N$ be the number of  
temporal steps, let $k = \frac{T}{N}$ be the step size, and let
\begin{align}\label{eq:partition}
  \pi = \{0 = t_0 < \ldots <t_n< \ldots < t_N = T\}
\end{align}
be an equidistant partition of the interval $[0,T]$, where $t_n = n k$ for $n
\in \{0,\dots,N\}$. The backward Euler--Maruyama method for the Langevin
equation \eqref{eq:langevin} is then given by the recursion
\begin{align}
  \label{eq1:Euler}
  \begin{cases}
    X^{n} = X^{n-1} - k \nabla \Phi(X^{n}) + g_0 \Delta W^n, \quad 
    n \in \{1,\ldots,N\},\\
    X^0 = X_0,
  \end{cases}
\end{align}
where $\Delta W^n = W(t_n) - W(t_{n-1})$.

An example of a non-smooth potential is found by setting $d = m = 1$ and
$\Phi(x) = |x|^{p}$, $x \in \R$, for $p \in [1,2)$. Evidently, the gradient of
$\Phi$ is not locally Lipschitz continuous at $0 \in \R$ for $p \in (1,2)$.
Moreover, if $p = 1$, then the gradient $\nabla \Phi$ has a jump discontinuity
of the form 
\begin{align}
  \label{eq1:gradient}
  \nabla \Phi(x)
  =
  \begin{cases}
    -1,& \text{ if } x < 0,\\
    c,& \text{ if } x = 0,\\
    1,& \text{ if } x > 0.
  \end{cases}
\end{align}
Here, the value $c \in \R$ at $x =0$ is not canonically determined.
We have to solve a nonlinear equation of the form 
$x + k \nabla \Phi(x) = y$ in each step of the backward Euler method
\eqref{eq1:Euler}. However, if $y \in (-k,k)$, then the sole candidate for a
solution is $x = 0$, since otherwise $|x + k \nabla \Phi(x)| \ge k$. But $x=0$
is only a solution if $kc = y$. Therefore, the 
mapping $\R \ni x \mapsto x + k \nabla \Phi(x) \in \R$ is not surjective for
any single-valued choice of $c$.

This problem can be bypassed by considering the \emph{multi-valued
subdifferential} $\partial \Phi \colon \R^d \to 2^{\R^d}$ of a convex
potential $\Phi \colon \R^d \to \R$, which is given by 
\begin{align*}
  \partial \Phi (x) = \big\{ v \in \R^d\, : \, \Phi(x) + \langle v, y - x \rangle
  \le \Phi(y) \, \text{for all } y \in \R^d \big\}.
\end{align*}
Recall that $\partial \Phi(x) = \{\nabla \Phi(x)\}$ if the gradient exists at
$x \in \R^d$ in the classical sense. See
\cite[Section~23]{rockafellar1997} for further details.

In the above example, one easily verifies that 
\begin{align*}
  \partial \Phi(x) =
  \begin{cases}
    \{-1\},& \text{ if } x < 0,\\
    [-1,1],& \text{ if } x = 0,\\
    \{ 1\},& \text{ if } x > 0.
  \end{cases}
\end{align*}
This allows us to solve the nonlinear \emph{inclusion} where we want to 
find $x \in \R$ with $x + k \partial \Phi(x) \ni y$ for any $y \in \R$.

For this reason, we study the more general problem of the numerical
approximation of multi-valued stochastic differential equations (MSDE)
of the form
\begin{align}
  \label{eq1:SDE}
  \begin{cases}
    \diff{X(t)} + f(X(t)) \diff{t} \ni b(X(t)) \diff{t} 
    + g(X(t)) \diff{W(t)}, \quad t \in (0,T],\\
    X(0) = X_0.
  \end{cases}
\end{align}
Here, we assume that the mappings $b \colon \R^d \to \R^d$ and 
$g \colon \R^d \to \R^{d,m}$ are globally Lipschitz continuous. Moreover, the 
multi-valued drift coefficient function $f \colon \R^d \to 2^{\R^d}$ is assumed
to be a maximal monotone operator, cf.~Definition~\ref{def:mono} below.
See also Section~\ref{sec:exact} for a complete list of all imposed assumptions
on the MSDE \eqref{eq1:SDE}. Let us emphasize that the subdifferential of a
proper, lower semi-continuous and convex potential is an important example of a
possibly multi-valued and maximal monotone mapping $f$,
cf.~\cite[Corollary~31.5.2]{rockafellar1997}.

The backward Euler--Maruyama method for the approximation of 
the MSDE \eqref{eq1:SDE} on the partition $\pi$ is then given 
by the recursion
\begin{align}
  \label{eq1:multiEuler}
  \begin{cases}
    X^{n} \in X^{n-1} - k f(X^{n}) + k b(X^{n}) + g(X^{n-1}) \Delta W^n, \quad
    n \in \{1,\ldots,N\},\\
    X^0 = X_0.
  \end{cases}
\end{align}
We discuss the well-posedness of this method \eqref{eq1:multiEuler}
under our assumptions on $f$, $b$, and $g$ in Section~\ref{sec:wellposed}. In
particular, it will turn out that both problems, \eqref{eq1:SDE} and
\eqref{eq1:multiEuler}, admit single-valued solutions $(X(t))_{t \in [0,T]}$ and
$(X^n)_{n = 0}^N$, respectively.

The main result of this paper, Theorem~\ref{thm:conv}, then states that
the backward Euler--Maruyama method is convergent of order at least
$1/4$ with respect to the norm in $L^2(\Omega;\R^d)$. For the error
analysis we rely on techniques for deterministic problems developed in
\cite{nochetto2000}.  An important ingredient is the additional condition
on $f$ that there exists $\gamma \in (0,\infty)$ with
\begin{align*}
  \langle f_v - f_z, z - w \rangle \le \gamma \langle f_v - f_w, v - w\rangle 
\end{align*}
for all $v, w, z \in D(f) \subset \R^d$ and $f_v \in f(v)$, $f_w \in f(w)$,
$f_z \in f(z)$. This assumption is easily verified for a subdifferential of
a convex potential, cf.~Lemma~\ref{lem:NochettoIneq}. As already noted in
\cite{nochetto2000} for deterministic problems, this inequality allows us to
avoid Gronwall-type arguments in the error analysis for terms involving the
multi-valued mapping $f$.  

Before we give a more detailed outline of the content of this paper 
let us mention that multi-valued stochastic differential equations have been
studied in the literature before. The existence of a uniquely determined
solution to the MSDE \eqref{eq1:SDE} has been investigated, e.g., in 
\cite{Cepa1995, kree1982, pettersson1995}. We also refer to the more recent
monograph \cite{pardoux2014} and the references therein. In 
\cite{Gess2014, stephanPhd} related results have been derived for multi-valued
stochastic evolution equations in infinite dimension.
The numerical analysis for MSDEs has also been considered in
\cite{bernadin2003, lepingle2004, pettersson2000, wu2013, zhang2018}. 
However, these papers differ from the present paper in terms of
the considered numerical methods,
the imposed conditions, or the obtained order of convergence.

Further, we also mention that several authors have developed explicit numerical
methods for SDEs with discontinuous drifts in recent years. For instance, we
refer to \cite{dareiotis2018, Leobacher2017, Leobacher2018, mueller2018,
neuenkirch2019, NgoTaguchi2017, ngo2018}. While these results often apply to
more irregular drift coefficients, which are beyond the framework of maximal monotone
operators, the authors have to employ more restrictive conditions 
such as the global boundedness of the drift, which is not required in our
framework. 

This paper is organized as follows: In Section~\ref{sec:prelim} we fix some
notation and recall important terminology for multi-valued mappings. In
Section~\ref{sec:reg} we demonstrate how to apply
the techniques from \cite{nochetto2000} to the simplified setting
of the Langevin equation \eqref{eq:langevin}. In addition, we also show that
if the gradient $\nabla \Phi$ is more regular, say H\"older continuous with
exponent $\alpha \in (0,1]$, then the order of convergence increases to
$\frac{1 + \alpha}{4}$. Moreover, it turns out that 
the error constant does not grow exponentially with the final time $T$.
This is an important insight if the backward Euler method is used within an
unadjusted Langevin algorithm \cite{roberts1996}, which typically requires
large time intervals. See Theorem~\ref{thm:conv_Hoelder} and 
Remark~\ref{rem:errconst} below.

In Section~\ref{sec:exact} we turn to the more general multi-valued
stochastic differential equation \eqref{eq1:SDE}. We state all assumptions
imposed on the appearing drift and diffusion coefficients and collect some
properties of the exact solution. In Section~\ref{sec:wellposed} we show
that the backward Euler--Maruyama method \eqref{eq1:multiEuler} is well-posed
under the assumptions of Section~\ref{sec:exact}. In
Section~\ref{sec:aposteriori} we prove the already mentioned convergence
result with respect to the root-mean-square norm. Finally, in
Section~\ref{sec:examples} we verify that the setting of Section~\ref{sec:exact} 
applies to a Langevin equation with the discontinuous gradient
\eqref{eq1:gradient}. 
Further, we also show how to apply our results to the spatial discretization 
of the stochastic $p$-Laplace equation which indicates their usability for 
the numerical analysis of
stochastic partial differential equations.
However, a complete analysis of the latter problem will be deferred to a future
work. 

%
%

\section{Preliminaries}
\label{sec:prelim}

In this section, we collect some notation and introduce some background 
material.
First we recall some terminology for set valued mappings and (maximal)
monotone operators. For a more detailed introduction we refer, for 
instance, to \cite[Abschn.~3.3]{ruzicka2004} or 
\cite[Chapter~6]{Winkert2018}.

By $\R^d$, $d \in \N$, we denote the Euclidean space with the standard norm
$|\cdot|$ and inner product $\langle \cdot, \cdot \rangle$. 
Let $M \subset \R^d$ be a set. A set-valued mapping $f \colon M \to 
2^{\R^d}$ maps each $x \in M$ to an element of the power set $2^{\R^d}$, 
that is, $f(x) \subseteq \R^d$. The \emph{domain} $D(f)$ of $f$ is given by
\begin{align*}
  D(f) = \{ x \in M \, : \, f(x) \neq \emptyset \}.
\end{align*}

\begin{definition}
  \label{def:mono}
  Let $M \subset \R^d$ be a non-empty set. A set-valued mapping $f \colon M \to
  2^{\R^d}$ is called \emph{monotone} if 
  \begin{align*}
    \inner{f_u - f_v}{u - v} \ge 0
  \end{align*}
  for all $u,v \in D(f)$,  $f_u \in f(u)$, and $f_v \in f(v)$.

  Moreover, a set-valued mapping $f \colon M \to
  2^{\R^d}$ is called \emph{maximal monotone} if
  $f$ is monotone and for all $x \in M$ and $y \in \R^d$
  satisfying
  \begin{align*}
    \inner{y - f_v}{x - v} \ge 0 \quad \text{ for all } v \in D(f), f_v \in f(v),
  \end{align*}
it follows  that $x \in D(f)$ and $y \in f(x)$.
\end{definition}

Next, we recall a Burkholder--Davis--Gundy-type inequality.
For a proof we refer to \cite[Chapter 1, Theorem~7.1]{mao2008}.
For its formulation we take note that the Frobenius or Hilbert--Schmidt
norm of a matrix $g \in \R^{d,m}$ is also denoted by $|g|$. 

\begin{lemma}\label{lem:hoelder}
  Let $p \in [2,\infty)$ and $g \in L^p(\Omega; L^p(0,T;\R^{d,m}))$ be
  stochastically integrable. Then, for every $s,t \in [0,T]$ with $s<t$, the
  inequality 
  \begin{align*}
    \E \Big[ \Big| \int_s^t g(\tau) \diff{W(\tau)} \Big|^p \Big]
    &\leq \Big(\frac{p(p-1)}{2} \Big)^{\frac{p}{2}} (t-s)^{\frac{p-2}{2}}
    \E \Big[ \int_s^t |g(\tau)|^p \diff{\tau}  \Big]
  \end{align*}
  holds.
\end{lemma}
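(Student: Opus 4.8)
The plan is to reduce the statement to a moment estimate for the $\R^d$-valued martingale $M(t) = \int_s^t g(\tau)\diff{W(\tau)}$, $t \in [s,T]$, with $M(s)=0$, and to extract the sharp constant by combining It\^o's formula with H\"older's inequality, which is the classical route behind the Burkholder--Davis--Gundy inequality. First I would observe that, since $p \ge 2$, the function $\phi(x) = |x|^p$ belongs to $C^2(\R^d)$ with $\nabla\phi(x) = p|x|^{p-2}x$ and Hessian $D^2\phi(x) = p|x|^{p-2}\,\one + p(p-2)|x|^{p-4}\,x x^\top$. Applying It\^o's formula to $\phi(M(t))$ and taking expectations, the stochastic integral term drops out, leaving
\begin{align*}
  \E\big[|M(t)|^p\big] = \half\, \E\Big[\int_s^t \Tr\big(D^2\phi(M(\tau))\, g(\tau)g(\tau)^\top\big)\diff{\tau}\Big].
\end{align*}
Using $\Tr(gg^\top) = |g|^2$ and $x^\top g g^\top x = |g^\top x|^2 \le |x|^2 |g|^2$, together with $p(p-2)\ge 0$, I would derive the pointwise bound $\Tr(D^2\phi(x)\,gg^\top) \le p(p-1)|x|^{p-2}|g|^2$, whence
\begin{align*}
  \E\big[|M(t)|^p\big] \le \frac{p(p-1)}{2}\, \E\Big[\int_s^t |M(\tau)|^{p-2}\,|g(\tau)|^2 \diff{\tau}\Big].
\end{align*}

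Next I would apply H\"older's inequality on the product space $\Omega \times (s,t)$ with conjugate exponents $\frac{p}{p-2}$ and $\frac{p}{2}$ to the integrand $|M|^{p-2}\cdot|g|^2$, producing the factor $\big(\E[\int_s^t |M|^p\diff{\tau}]\big)^{(p-2)/p}\big(\E[\int_s^t |g|^p\diff{\tau}]\big)^{2/p}$. Since $|\cdot|^p$ is convex and $M$ is a martingale, the map $\tau \mapsto \E[|M(\tau)|^p]$ is nondecreasing, so $\E[\int_s^t |M|^p\diff{\tau}] \le (t-s)\,\E[|M(t)|^p]$. Writing $F(t) = \E[|M(t)|^p]$, these steps combine to
\begin{align*}
  F(t) \le \frac{p(p-1)}{2}\,(t-s)^{(p-2)/p}\,F(t)^{(p-2)/p}\,\Big(\E\Big[\int_s^t |g|^p\diff{\tau}\Big]\Big)^{2/p}.
\end{align*}
Dividing by $F(t)^{(p-2)/p}$ and raising both sides to the power $\frac{p}{2}$ then yields exactly the claimed inequality, with the exponent $(t-s)^{(p-2)/2}$ and the constant $\big(\tfrac{p(p-1)}{2}\big)^{p/2}$ appearing automatically from the bookkeeping.

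The main obstacle is not the algebra but the justification of the two analytic steps hidden above: that the It\^o correction integral is a genuine martingale, so that its expectation vanishes, and that $F(t)=\E[|M(t)|^p]$ is finite, which is needed before one may divide by $F(t)^{(p-2)/p}$. I would handle both by localization: introduce the stopping times $\tau_n = \inf\{\tau \ge s : |M(\tau)| \ge n\}$, run the computation for the stopped martingale $M(\cdot \wedge \tau_n)$, where all integrands are bounded and the martingale term has zero expectation, and obtain the inequality with $M(t \wedge \tau_n)$ on the left. Because the right-hand side is dominated by $\big(\tfrac{p(p-1)}{2}\big)^{p/2}(t-s)^{(p-2)/2}\,\E[\int_s^t |g|^p\diff{\tau}] < \infty$ uniformly in $n$ by the integrability hypothesis on $g$, this bounds $\E[|M(t\wedge\tau_n)|^p]$ uniformly; Fatou's lemma as $n\to\infty$ then delivers both finiteness of $F(t)$ and the desired estimate. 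The degenerate case $F(t)=0$ is trivial.
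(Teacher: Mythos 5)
Your proof is correct and is essentially the classical argument behind this inequality: It\^o's formula for $|x|^p$, the trace bound $\Tr(D^2\phi(x)\,gg^\top)\le p(p-1)|x|^{p-2}|g|^2$, H\"older on $\Omega\times(s,t)$, the submartingale monotonicity of $\tau\mapsto\E[|M(\tau)|^p]$, and a localization/Fatou step to justify finiteness and the vanishing of the martingale term. The paper does not prove this lemma itself but cites Mao (Chapter~1, Theorem~7.1), and your argument reproduces exactly that proof, including the sharp constant $\bigl(\tfrac{p(p-1)}{2}\bigr)^{p/2}$ and the factor $(t-s)^{(p-2)/2}$.
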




Let us also recall a stochastic variant of the Gronwall inequality.
A proof that can be modified to this setting can be found in \cite{xie2017}.
Compare also with \cite{scheutzow2013}.

\begin{lemma}\label{lem:StochGronwall}
  Let $Z,M,\xi \colon [0,T] \times \Omega \to \R$ be 
  $(\F_t)_{t\in[0,T]}$-adapted and almost surely continuous stochastic 
  processes such that $M$ is a local $(\F_t)_{t\in[0,T]}$-martingale with 
  $M(0) = 0$. Moreover, suppose that $Z$ and $\xi$ are nonnegative. In 
  addition, let $\varphi \colon [0, T ] \to \R$ be integrable and 
  nonnegative. If, for all $t \in [0, T ]$, we have
  \begin{align*}
    Z(t) \leq \xi(t) + \int_{0}^{t} \varphi(s) Z(s) \diff{s} + M(t), 
    \quad \P\text{-almost surely,}
  \end{align*}
  then, for every $t \in [0,T]$, the inequality 
  \begin{align*}
    \E \big[Z(t) \big] \leq \exp\Big(\int_{0}^{t} \varphi(s) \diff{s} \Big) 
    \E\big[ \sup_{s\in[0,t]} \xi(s) \big]
  \end{align*}
  holds.
\end{lemma}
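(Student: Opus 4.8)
The plan is to reduce the statement to the classical (deterministic) Gronwall inequality applied to the function $u_n(t) = \E[Z(t \wedge \tau_n)]$, where $(\tau_n)$ is a suitable sequence of stopping times that simultaneously turns the local martingale $M$ into a genuine martingale and renders $u_n$ finite. If $\E[\sup_{s \in [0,t]}\xi(s)] = \infty$, the asserted bound is trivial, so I would assume throughout that this quantity is finite.

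For the localization, I would first pick a localizing sequence $(\sigma_n)$ for $M$, i.e.\ $\sigma_n \uparrow \infty$ almost surely and each stopped process $M(\cdot \wedge \sigma_n)$ is a martingale with $\E[M(t \wedge \sigma_n)] = M(0) = 0$. To control the size of $Z$, I would additionally use the hitting times $\rho_n = \inf\{t \in [0,T] : Z(t) \ge n\} \wedge T$, which are stopping times since $Z$ is adapted and almost surely continuous, and set $\tau_n = \sigma_n \wedge \rho_n$. By construction $M(\cdot \wedge \tau_n)$ is still a martingale, and $Z(t \wedge \tau_n) \le n$ by continuity of $Z$, so that $u_n(t) \le n < \infty$. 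Moreover, since $Z$ is almost surely continuous, hence almost surely bounded on $[0,T]$, one has $\rho_n \uparrow T$ and therefore $\tau_n \uparrow T$ almost surely.

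Next I would evaluate the hypothesis at the stopped time $t \wedge \tau_n$, giving
\[
  Z(t \wedge \tau_n) \le \xi(t \wedge \tau_n) + \int_0^{t \wedge \tau_n} \varphi(s) Z(s) \diff{s} + M(t \wedge \tau_n)
\]
almost surely. Taking expectations, the martingale term drops out. Using $\one_{\{s \le \tau_n\}}\varphi(s)Z(s) \le \varphi(s) Z(s \wedge \tau_n)$ together with Tonelli's theorem (permissible since $\varphi$ and $Z$ are nonnegative) and the bound $\xi(t \wedge \tau_n) \le \sup_{s \in [0,t]}\xi(s)$, I would arrive at
\[
  u_n(t) \le \E\big[\textstyle\sup_{s \in [0,t]}\xi(s)\big] + \int_0^t \varphi(s) u_n(s) \diff{s}.
\]
Since $u_n$ is bounded by $n$ and $\varphi$ is integrable, the classical Gronwall inequality yields $u_n(t) \le \exp\big(\int_0^t \varphi(s)\diff{s}\big)\, \E[\sup_{s \in [0,t]}\xi(s)]$.

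Finally, I would let $n \to \infty$. Because $\tau_n \uparrow T \ge t$ and $Z$ is almost surely continuous, $Z(t \wedge \tau_n) \to Z(t)$ almost surely, and since $Z \ge 0$, Fatou's lemma gives $\E[Z(t)] \le \liminf_{n} u_n(t)$, which is bounded by the right-hand side above, as claimed. The main obstacle is precisely the local (rather than true) martingale character of $M$: the whole argument hinges on choosing $\tau_n$ so that the martingale term has vanishing expectation while keeping $u_n$ finite, the latter being what legitimizes the application of Gronwall; the passage to the limit must then be carried out with Fatou rather than dominated convergence, since no a priori uniform integrability of $Z(t \wedge \tau_n)$ is available.
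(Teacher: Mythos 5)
Your argument is correct and is the standard localization proof of this stochastic Gronwall lemma; the paper itself gives no proof but defers to \cite{xie2017} and \cite{scheutzow2013}, where essentially the same stopping-time/Fatou argument (stop to make $M$ a true martingale and $Z$ bounded, take expectations, apply the deterministic Gronwall inequality, pass to the limit with Fatou) is carried out. The only cosmetic point is that the claim $Z(t\wedge\tau_n)\le n$ can fail on the event $\{Z(0)>n\}$, where $\rho_n=0$; but there $Z(t\wedge\tau_n)=Z(0)\le\xi(0)$, so $u_n(t)\le n+\E[\xi(0)]<\infty$ under your standing assumption $\E\big[\sup_{s\in[0,t]}\xi(s)\big]<\infty$, and the application of Gronwall's inequality remains justified.
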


Moreover, we often make use of generic constants. More precisely, by $C$ we
denote a finite and positive quantity that may vary from occurrence to
occurrence but is always independent of numerical parameters such as the step
size $k = \frac{T}{N}$ and the number of steps $N \in \N$.

%
%

\section{Application to the Langevin equation with a convex potential}
\label{sec:reg}

In order to illustrate our approach, we first consider a more regular
stochastic differential equation with single-valued (H\"older)
continuous drift term.  More precisely, we consider the overdamped
Langevin equation \cite[Section~2.2]{lelievre2010}
\begin{align}
  \label{eq2:SDE}
  \begin{cases}
    \diff{X(t)} = - \nabla \Phi(X(t))\diff{t} + g_0 \diff{W(t)}, \quad t \in
    [0,T],\\
    X(0) = X_0,
  \end{cases}
\end{align}
where $X_0 \in L^2(\Omega, \F_0, \P;\R^d)$, $g_0 \in \R^{d,m}$, and 
$W \colon [0,T] \times \Omega \to \R^m$ is a standard $\R^m$-valued 
Wiener process. In addition, we impose the following assumption on the
potential $\Phi \colon \R^d \to \R$.

\begin{assumption}
  \label{ass:phi}
  Let $\Phi \colon \R^d \to \R$ be a convex, nonnegative, and
  continuously differentiable function.
\end{assumption}

In the following, we denote by $f \colon \R^d \to \R^d$ the gradient of
$\Phi$, that is $f(x) = \nabla \Phi(x)$. It is well-known that 
the convexity of $\Phi$ implies the variational inequality 
\begin{align}
  \label{eq1:varineq}
  \langle f(v), w - v \rangle \le \Phi(w) - \Phi(v),\quad v, w \in \R^d,
\end{align}
see, for example, 
\cite[\S~23]{rockafellar1997}.

In the following lemma, we collect some properties of $f$, which are
direct consequences of Assumption~\ref{ass:phi}. Both inequalities are
well-known.  The proof of \eqref{eq4:NochettoIneq} is taken from
\cite{nochetto2000}.

\begin{lemma} 
  \label{lem:NochettoIneq}
  Under Assumption~\ref{ass:phi} and with $f = \nabla \Phi$,  the 
  inequalities
  \begin{align}
    \label{eq4:monotonicity}
    \inner{f(v) - f(w)}{v-w} \ge 0
  \end{align}
  and 
  \begin{align}
    \label{eq4:NochettoIneq}
    \inner{f(v) - f(z)}{z-w}
    \leq \inner{f(v) - f(w)}{v-w}
  \end{align}
  are fulfilled for all $v,w,z \in \R^d$. 
\end{lemma}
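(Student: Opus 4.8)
The plan is to derive both inequalities directly from the first-order convexity characterization \eqref{eq1:varineq}, which is the only structural information about $f = \nabla\Phi$ that the argument requires. No regularity beyond Assumption~\ref{ass:phi} enters, and in particular no Gronwall-type reasoning is needed.

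For the monotonicity estimate \eqref{eq4:monotonicity}, I would apply \eqref{eq1:varineq} twice with the roles of the two points interchanged. Writing it once for the pair $(v,w)$ and once for the pair $(w,v)$ yields
\begin{align*}
  \inner{f(v)}{w-v} \le \Phi(w) - \Phi(v)
  \quad\text{and}\quad
  \inner{f(w)}{v-w} \le \Phi(v) - \Phi(w).
\end{align*}
Adding these two inequalities cancels the potential values on the right-hand side and leaves $\inner{f(v)-f(w)}{w-v} \le 0$, which is precisely \eqref{eq4:monotonicity} after changing signs.

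The estimate \eqref{eq4:NochettoIneq} is the genuinely informative ingredient, and the idea is to reduce it to a telescoping sum of potential differences. First I would rewrite the claim in the equivalent form
\begin{align*}
  \inner{f(v)}{z-v} \le \inner{f(z)}{z-w} - \inner{f(w)}{v-w},
\end{align*}
which follows by expanding both sides of \eqref{eq4:NochettoIneq} and using bilinearity to combine the two terms carrying $f(v)$ into $\inner{f(v)}{(z-w)-(v-w)} = \inner{f(v)}{z-v}$. Next I would estimate the left-hand side from above by applying \eqref{eq1:varineq} to the pair $(v,z)$, giving $\inner{f(v)}{z-v} \le \Phi(z) - \Phi(v)$. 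For the right-hand side I would apply \eqref{eq1:varineq} to the pairs $(z,w)$ and $(w,v)$, which yield $\inner{f(z)}{z-w} \ge \Phi(z) - \Phi(w)$ and $-\inner{f(w)}{v-w} \ge \Phi(w) - \Phi(v)$, respectively. Summing these two bounds, the intermediate value $\Phi(w)$ cancels and the right-hand side is seen to dominate $\Phi(z) - \Phi(v)$, which in turn dominates the left-hand side; chaining the three estimates closes the argument.

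The only point worth verifying carefully is the bookkeeping that selects the correct three pairs of points and confirms the cancellation of $\Phi(w)$ in the telescoping sum. Once the equivalent form above is written down, however, the choice of pairs $(v,z)$, $(z,w)$, and $(w,v)$ is essentially forced by the requirement that the potential values telescope, and the remaining computation is routine.
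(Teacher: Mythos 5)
Your proposal is correct and follows essentially the same route as the paper: both parts rest solely on the first-order convexity inequality \eqref{eq1:varineq}, the monotonicity follows by symmetrizing over the pair $(v,w)$, and \eqref{eq4:NochettoIneq} is obtained from the three Bregman-type gaps at the pairs $(v,z)$, $(z,w)$, $(w,v)$ with the potential values telescoping. The only cosmetic difference is that the paper records an exact identity with nonnegative remainders $\sigma(\cdot,\cdot)$ and then drops them, whereas you chain the three inequalities directly; the content is identical.
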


\begin{proof}
  The first inequality follows directly from \eqref{eq1:varineq} since
  \begin{align*}
    \inner{f(v) - f(w)}{v-w} &=  -\inner{f(v)}{w-v} -  \inner{f(w)}{v-w} \\
    &\geq - \big( \Phi(w) - \Phi(v) \big) - \big( \Phi(v) - \Phi(w) \big) = 0
  \end{align*}
  for all $v,w \in \R^d$.
  For the proof of the second inequality we start by rewriting its left-hand
  side. For arbitrary $v,w,z \in \R^d$ we
  rearrange the terms to obtain
  \begin{align*}
    &\inner{f(v) - f(z)}{z-w}\\
    &\quad = \inner{f(v)}{z} - \inner{f(v)}{w} + \inner{f(z)}{w-z}
      \pm \inner{f(v)}{v}\\
    &\quad = \inner{f(v)}{z-v} + \inner{f(v)}{v-w} + \inner{f(z)}{w-z}
    \pm \inner{f(w)}{v-w}\\
    &\quad = \inner{f(v)}{z-v} + \inner{f(v)-f(w)}{v-w} + \inner{f(z)}{w-z}
    + \inner{f(w)}{v-w}.
  \end{align*}
  Setting $\sigma(v,w) := \Phi(w) - \Phi(v) - \inner{f(v)}{w-v}$ for all $v,w
  \in \R^d$, we see that
  \begin{align*}
    \inner{f(v) - f(z)}{z-w}
    &= \inner{f(v)-f(w)}{v-w} 
    + \Phi(z) - \Phi(v) - \sigma(v,z) \\
    &\quad + \Phi(w) - \Phi(z) - \sigma(z,w)
    + \Phi(v) - \Phi(w) - \sigma(w,v)\\
    &= \inner{f(v)-f(w)}{v-w} - \sigma(v,z) - \sigma(z,w) - \sigma(w,v).
  \end{align*}
  But \eqref{eq1:varineq} says that $\sigma(v,w) \geq 0$ for all
  $v,w \in \R^d$, which completes the proof. 
\end{proof}

It follows from Assumption~\ref{ass:phi} and
Lemma~\ref{lem:NochettoIneq} that the drift $f = \nabla \Phi$ of the
stochastic differential equation \eqref{eq2:SDE} is continuous and 
monotone.
Therefore, by \cite[Thm.~3.1.1]{roeckner2007} the stochastic differential
equation \eqref{eq2:SDE} has a solution in the strong (probabilistic) sense
satisfying $\P$-a.s.\ for all $t \in [0,\infty)$
\begin{align}
  \label{eq2:SDEsol}
  X(t) = X_0 - \int_0^t f(X(s))\diff{s} + g_0 W(t).
\end{align}
Moreover, the solution is unique up to $\P$-indistinguishability and it is
square-integrable with
\begin{align*}
  \sup_{t \in [0,T]} \E \big[ |X(t)|^2 \big] 
  \le C \big(1 + \E \big[ |X_0|^2 \big] \big).
\end{align*}

Next, we turn to the numerical approximation of the solution of
\eqref{eq2:SDE}. Recall that for a single-valued drift 
the backward Euler--Maruyama method is given by the 
recursion
\begin{align}
  \label{eq2:bEuler}
  \begin{cases}
    X^{n} = X^{n-1} - k f(X^{n}) + g_0 \Delta W^n, \quad n \in
    \{1,\ldots,N\},\\
    X^0 = X_0,
  \end{cases}
\end{align}
where $\Delta W^n = W(t_{n}) - W(t_{n-1})$, $t_n = nk$ and $k = 
\frac{T}{N}$.

The next lemma contains some \emph{a priori} estimates for the backward
Euler--Maruyama method \eqref{eq2:bEuler}.

\begin{lemma}
  \label{lem:apriori2}
  Let $g_0 \in \R^{d,m}$ be given and let Assumption~\ref{ass:phi} be 
  satisfied.
  For an arbitrary step size $k = \frac{T}{N}$, $N \in \N$, 
  let $(X^n)_{n \in \{0,\dots,N\}}$ be a family of
  $(\F_{t_n})_{n \in \{0,\ldots,N\}}$-adapted random
  variables satisfying \eqref{eq2:bEuler}. If $X_0 \in
  L^2(\Omega,\F_0,\P;\R^d)$, then 
  \begin{align}
    \label{eq2:aprioriX1}
    \max_{n \in \{1,\ldots,N\}} \E [|X^n|^2]
    \leq \E \big[ |X_0|^2 \big] + 2 T \big( \Phi(0) + |g_0|^2 \big)
  \end{align}
  and 
  \begin{align}
    \label{eq2:aprioriX}
    \sum_{n = 1}^N \E \big[ | X^n - X^{n-1}|^2 \big]
    + 4 k \sum_{n = 1}^{N} \E \big[ \Phi(X^n) \big]
    \leq 2\E \big[ |X_0|^2 \big] + 4 T \big( \Phi(0) + |g_0|^2 \big).
  \end{align}
\end{lemma}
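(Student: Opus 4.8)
The plan is to derive both bounds simultaneously by testing the recursion \eqref{eq2:bEuler} with $X^n$ in the Euclidean inner product. First I would rewrite \eqref{eq2:bEuler} as $X^n - X^{n-1} + k f(X^n) = g_0 \Delta W^n$ and pair it with $X^n$. For the difference term I would use the elementary identity
\begin{align*}
  \inner{X^n - X^{n-1}}{X^n} = \tfrac{1}{2}\big(|X^n|^2 - |X^{n-1}|^2 + |X^n - X^{n-1}|^2\big),
\end{align*}
which is exactly what produces the telescoping sum in $|X^n|^2$ together with the dissipation term $\sum_n |X^n - X^{n-1}|^2$ appearing in \eqref{eq2:aprioriX}.

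For the drift term I would invoke the variational inequality \eqref{eq1:varineq} with $v = X^n$ and $w = 0$, which yields $\inner{f(X^n)}{X^n} \ge \Phi(X^n) - \Phi(0)$ and hence the factor $k\Phi(X^n)$ on the left and the constant $k\Phi(0)$ on the right. The delicate point is the stochastic term $\inner{g_0\Delta W^n}{X^n}$: since $X^n$ is correlated with the increment $\Delta W^n$ through the implicit scheme, this term does not have vanishing expectation. To circumvent this I would split $X^n = X^{n-1} + (X^n - X^{n-1})$. The contribution $\inner{g_0\Delta W^n}{X^{n-1}}$ has zero expectation because $X^{n-1}$ is $\F_{t_{n-1}}$-measurable while $\Delta W^n$ is independent of $\F_{t_{n-1}}$ with mean zero (tower property). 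The remaining contribution $\inner{g_0\Delta W^n}{X^n - X^{n-1}}$ I would control by Young's inequality in the form $\inner{a}{b} \le \tfrac{1}{4}|b|^2 + |a|^2$, so that $\tfrac{1}{4}|X^n - X^{n-1}|^2$ can be absorbed into the $\tfrac{1}{2}|X^n - X^{n-1}|^2$ coming from the identity above, leaving the clean quadratic residual $|g_0\Delta W^n|^2$.

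Taking expectations, using $\E[|g_0\Delta W^n|^2] = k|g_0|^2$, and collecting terms would give the one-step inequality
\begin{align*}
  \tfrac{1}{2}\big(\E[|X^n|^2] - \E[|X^{n-1}|^2]\big) + \tfrac{1}{4}\E[|X^n - X^{n-1}|^2] + k\E[\Phi(X^n)] \le k\Phi(0) + k|g_0|^2.
\end{align*}
Summing over $n$ telescopes the first term. To obtain \eqref{eq2:aprioriX} I would keep the dissipation and potential sums, discard the nonnegative term $\tfrac{1}{2}\E[|X^N|^2]$, and multiply by $4$; to obtain \eqref{eq2:aprioriX1} I would instead retain $\E[|X^M|^2]$ for the maximal index $M$, discard the nonnegative dissipation and potential sums (recall $\Phi \ge 0$), and use $Mk \le T$. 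I expect the main obstacle to be the careful treatment of the stochastic term, namely recognizing that the zero-mean contribution only emerges after the splitting $X^n = X^{n-1} + (X^n - X^{n-1})$ and choosing the coefficients in Young's inequality so that the $|X^n - X^{n-1}|^2$ term survives on the left with the correct factor. A minor preliminary point is to guarantee $X^n \in L^2(\Omega;\R^d)$ so that all expectations are finite; this follows inductively since the resolvent $(\id + kf)^{-1}$ associated with the monotone map $f$ is nonexpansive, whence $|X^n| \le |X^{n-1} + g_0\Delta W^n| + |(\id+kf)^{-1}(0)|$.
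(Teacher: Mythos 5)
Your argument is correct and follows essentially the same route as the paper: the same polarization identity for $\inner{X^n-X^{n-1}}{X^n}$, the variational inequality \eqref{eq1:varineq} with $w=0$, the splitting $X^n = X^{n-1} + (X^n-X^{n-1})$ to isolate the zero-mean martingale contribution, and Young's inequality to absorb the remainder into the dissipation term, followed by telescoping; the constants you obtain match \eqref{eq2:aprioriX1} and \eqref{eq2:aprioriX} exactly. Your justification of square-integrability via the nonexpansiveness of the resolvent $(\id+kf)^{-1}$ is a welcome concrete substitute for the paper's brief appeal to an inductive argument.
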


\begin{proof}
  First, we recall the identity
  \begin{align*}
    \inner{X^n - X^{n-1}}{X^n}
    = \frac{1}{2} \big( |X^n|^2 - |X^{n-1}|^2 + |X^n - X^{n-1}|^2 \big).
  \end{align*}
  Using also \eqref{eq2:bEuler}, we then get
  \begin{align*}
    |X^n|^2 - |X^{n-1}|^2 + |X^n - X^{n-1}|^2
    &= 2 \langle  X^n - X^{n-1}, X^n \rangle \\
    &= - 2 k \inner{f(X^n)}{X^n}
    + 2 \langle g_0 \Delta W^n, X^n \rangle,
  \end{align*}
  for every $n \in \{1,\ldots,N\}$.
  Hence, an application of \eqref{eq1:varineq} yields
  \begin{align*}
    |X^n|^2 - |X^{n-1}|^2 + |X^n - X^{n-1}|^2
    &\le 2 k \big( \Phi(0) - \Phi(X^n)  \big)
    + 2 \langle g_0 \Delta W^n, X^n \rangle,
  \end{align*}
  for every $n \in \{1,\ldots,N\}$.
  From applications of the Cauchy--Schwarz inequality 
  and the weighted Young inequality
  we then obtain 
  \begin{align*}
    &|X^n|^2 - |X^{n-1}|^2 + |X^n - X^{n-1}|^2 + 2 k \Phi(X^n) \\
    &\quad \le 2 k \Phi(0) + 2 \langle g_0 \Delta W^n, X^n - X^{n-1}
    \rangle + 2 \langle g_0 \Delta W^n, X^{n-1} \rangle\\
    &\quad \le 2 k \Phi(0) + 2 \big| g_0 \Delta W^n \big|^2
    + \frac{1}{2} | X^n - X^{n-1}|^2
    + 2 \langle g_0 \Delta W^n, X^{n-1} \rangle,
  \end{align*}
  for every $n \in \{1,\ldots,N\}$.
  
  The third term on the right-hand side is absorbed in the third term
  on the left-hand side. Summation then yields 
  \begin{align*}
    &|X^n|^2 + \frac{1}{2} \sum_{ j = 1}^n | X^{j} - X^{j-1} |^2
    + 2 k \sum_{j = 1}^n \Phi(X^j)\\
    &\quad \le |X^0|^2 + 2 t_n \Phi(0) 
    + 2 \sum_{j = 1}^n \big| g_0 \Delta W^j \big|^2
    + 2 \sum_{j = 1}^n \big\langle g_0 \Delta W^j, X^{j-1} \big\rangle.
  \end{align*}
  An inductive argument over $n \in \{1,\ldots,N\}$ then yields that $X^n$ is
  square-integrable due to the assumption $X_0 \in L^2(\Omega,\F_0,\P;\R^d)$.
  Therefore, after taking expectation the last sum vanishes. Moreover, an
  application of the It\=o isometry then gives
  \begin{align*}
    &\E\big[ |X^n|^2 \big] + \frac{1}{2} \sum_{ j = 1}^n \E \big[ | X^{j} -
    X^{j-1} |^2 \big]
    + 2 k \sum_{j = 1}^n \E \big[ \Phi(X^j) \big]\\
    &\quad \le \E \big[ |X^0|^2 \big] + 2 t_n \Phi(0)
    + 2 \sum_{j = 1}^n \E \big[ \big| g_0 \Delta W^j \big|^2 \big]\\
    &\quad = \E \big[ |X_0|^2 \big] + 2 t_n \big( \Phi(0) +
    | g_0 |^2\big).
  \end{align*}
  Since this is true for any $n \in \{1,\ldots,N\}$ the assertion follows.
\end{proof}

As the next theorem shows, Assumption~\ref{ass:phi} is also sufficient
to ensure the well-posedness of the backward Euler--Maruyama method.
The result follows directly from the fact that $f$ is continuous and 
monotone
due to \eqref{eq4:monotonicity}. For a proof we refer, for instance,
to \cite[Sect.~4]{beyn2016}, \cite[Chap.~6.4]{ortega2000}, and
\cite[Theorem~C.2]{stuart1996}. The assertion also follows from the more
general result in Theorem~\ref{thm:existence_multi} below.

\begin{theorem}
  \label{thm:existence}
  Let $X_0 \in L^2(\Omega,\F_0,\P;\R^d)$ as well as  $g_0 \in \R^{d,m}$ be 
  given and let Assumption~\ref{ass:phi} be satisfied.
  Then, for every equidistant step size $k = \frac{T}{N}$, $N \in \N$,
  there exists a uniquely determined family of square-integrable and
  $(\F_{t_n})_{n \in \{0,\ldots,N\}}$-adapted random variables $(X^n)_{n\in
  \{0,\dots,N\}}$ satisfying \eqref{eq2:bEuler}.
\end{theorem}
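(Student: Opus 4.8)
The plan is to observe that the recursion \eqref{eq2:bEuler} decouples into $N$ successive deterministic problems, one for each time level, and to solve each of them by exploiting the strong monotonicity of the perturbed operator $\id + kf$. Concretely, given the value $X^{n-1}$ and the increment $\Delta W^n$, the $n$-th line of \eqref{eq2:bEuler} is equivalent to finding $X^n$ with
\[
  F_k(X^n) = X^{n-1} + g_0 \Delta W^n, \qquad F_k(x) := x + k f(x).
\]
First I would record that $F_k$ is continuous, since $f = \nabla \Phi$ is continuous by Assumption~\ref{ass:phi}, and that it is strongly monotone: using the monotonicity inequality \eqref{eq4:monotonicity} one gets
\[
  \inner{F_k(x) - F_k(y)}{x-y} = |x-y|^2 + k \inner{f(x)-f(y)}{x-y} \ge |x-y|^2
\]
for all $x,y \in \R^d$.

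The core step is then to show that $F_k \colon \R^d \to \R^d$ is a bijection. Injectivity is immediate from the displayed strong monotonicity. For surjectivity I would use the gradient structure: solving $F_k(x) = y$ is exactly the first-order optimality condition for minimizing the functional $\psi(x) = \half |x|^2 + k \Phi(x) - \inner{y}{x}$, which is continuous, strictly convex (indeed $1$-strongly convex, as $\Phi$ is convex), and coercive (using $\Phi \ge 0$); hence it attains its minimum at a unique point, the sought solution. Alternatively, one may invoke the Browder--Minty/Zarantonello theorem for continuous, coercive, strongly monotone operators, as in the cited references. Combining the two displays by the Cauchy--Schwarz inequality yields $|F_k(x)-F_k(y)| \ge |x-y|$, so the resolvent $J_k := F_k^{-1}$ is a well-defined, single-valued, $1$-Lipschitz, and in particular continuous (hence Borel measurable) map on $\R^d$.

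With $J_k$ in hand I would construct the family $(X^n)$ by induction over $n$, setting $X^0 = X_0$ and $X^n = J_k(X^{n-1} + g_0 \Delta W^n)$. Adaptedness follows because $J_k$ is measurable, $X^{n-1}$ is $\F_{t_{n-1}}$-measurable by the inductive hypothesis, and $\Delta W^n$ is $\F_{t_n}$-measurable; hence $X^n$ is $\F_{t_n}$-measurable. Square-integrability follows from the $1$-Lipschitz bound $|J_k(z)| \le |J_k(0)| + |z|$, which gives $|X^n| \le |J_k(0)| + |X^{n-1}| + |g_0| \, |\Delta W^n|$, so $X^n \in L^2(\Omega;\R^d)$ whenever $X^{n-1}$ is, using $X_0 \in L^2(\Omega,\F_0,\P;\R^d)$ and $\Delta W^n \in L^2$; the a priori bound of Lemma~\ref{lem:apriori2} also applies once existence is known. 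Uniqueness is inherited level by level from the injectivity of $F_k$: any two adapted solutions coincide at $n=0$ and, if equal at level $n-1$, solve the same equation at level $n$, whose solution is unique.

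The main obstacle is the surjectivity of $\id + kf$, that is, the global solvability of each nonlinear step; everything else is bookkeeping (measurability, integrability, and the trivial induction for uniqueness). The gradient/variational route makes surjectivity painless here, but in the purely monotone, non-gradient setting of the later Theorem~\ref{thm:existence_multi} one genuinely needs the maximal monotone machinery, which is presumably why that more general result is proved separately.
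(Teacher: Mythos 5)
Your proof is correct. Note, however, that the paper does not actually write out a proof of Theorem~\ref{thm:existence}: it simply remarks that the claim follows from the continuity and monotonicity of $f$, cites \cite{beyn2016}, \cite{ortega2000}, \cite{stuart1996} for the solvability of the nonlinear equation, and observes that the statement is a special case of Theorem~\ref{thm:existence_multi}. The proof of that more general result follows the same skeleton as yours --- solve one implicit step at a time, show the solution operator $S_k$ is Lipschitz (Lemma~\ref{lem:stab}, the analogue of your $1$-Lipschitz resolvent $J_k$), and run an induction to get adaptedness and square-integrability --- but it obtains surjectivity of $\id + kf - kb$ from the maximal monotone machinery (\cite[Theorem~2.1]{barbu.2010}), since in that setting $f$ is multi-valued and has no potential. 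Your variational argument, minimizing the $1$-strongly convex, coercive functional $\psi(x) = \tfrac12|x|^2 + k\Phi(x) - \inner{y}{x}$, is a genuinely more elementary and self-contained route to the same surjectivity, made possible precisely by the gradient structure and the sign condition $\Phi \ge 0$ in Assumption~\ref{ass:phi}; what it buys is independence from the Browder--Minty/maximal-monotone theorems, at the cost of not generalizing to the non-gradient, multi-valued setting of Section~\ref{sec:wellposed} --- a limitation you correctly identify yourself. The remaining bookkeeping (measurability of $X^n$ via continuity of the resolvent, square-integrability via the Lipschitz bound, uniqueness from strong monotonicity) matches the paper's treatment in Theorem~\ref{thm:existence_multi} essentially verbatim.
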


We now turn to an error estimate with respect to the 
$L^2(\Omega;\R^d)$-norm. 
Since we do not impose any (local) Lipschitz condition on the 
drift $f$, 
classical approaches based on discrete Gronwall-type inequalities
are not applicable.
Instead we rely on an error representation formula, which was introduced  
for deterministic problems in \cite{nochetto2000}.

For the formulation of this, we introduce the following notation:
For a given equidistant partition $\pi = \{0=t_0 < t_1 < \ldots < t_N = T\}
\subset [0,T]$ with step size $k = \frac{T}{N}$, we denote by $\X \colon [0,T]
\times \Omega \to \R^d$ the piecewise linear interpolant of the
sequence $(X^n)_{n \in \{0,\ldots,N\}}$ generated by the backward Euler method
\eqref{eq2:bEuler}.   It is defined by $\X(0) = X^0$ and 
\begin{align}
  \label{eq2:Xlin}
  \X(t) = \frac{t - t_{n-1}}{k} X^n + \frac{t_n - t}{k} X^{n-1}, \quad 
  \text{for all } t \in (t_{n-1}, t_n], \, n \in \{1,\ldots,N\}.
\end{align}
In addition, we introduce the processes 
$\overline{\X}, \underline{\X} \colon [0,T] \times \Omega \to \R^d$,
which are piecewise constant interpolants of $(X^n)_{n \in \{0,\ldots,N\}}$
and defined by $\overline{\X}(0)= \underline{\X}(0) = X^0$ and
\begin{align}
  \label{eq2:Xconst}
  \overline{\X}(t) = X^n \quad \text{and} \quad \underline{\X}(t) =
  X^{n-1}, \quad \text{ for all } t \in (t_{n-1},t_n], \, n \in \{1,\ldots,N\}.
\end{align}
Analogously, we define the piecewise linear interpolated process
$\mathcal{W} \colon [0,T] \times \Omega \to \R^m$ by $\mathcal{W}(0) = 0$ 
and
\begin{align}
  \label{eq2:Wlin}
  \mathcal{W}(t) = \frac{t - t_{n-1}}{k} W(t_{n}) + \frac{t_n - t}{k}
  W(t_{n-1}) = W(t_{n-1}) + \frac{t - t_{n-1}}{k} \Delta W^n, 
\end{align}
for all $t \in (t_{n-1},t_n]$, $n \in \{1,\ldots,N\}$.

We are now prepared to state Lemma~\ref{lem:err_rep2}.
The underlying idea of this lemma was introduced in 
\cite{nochetto2000}, where it is used to derive a posteriori error estimates 
for the backward Euler method. In fact, in the absence of noise,
only the first term on the right-hand side of \eqref{eq2:errorRep} is
non-zero. In \cite{nochetto2000} this term is used as an \emph{a posteriori
error estimator}, since it is explicitly computable by quantities 
generated by the numerical method. 

\begin{lemma}
  \label{lem:err_rep2}
  Let $X_0 \in L^2(\Omega,\F_0,\P;\R^d)$ as well as  $g_0 \in \R^{d,m}$ be 
  given and let Assumption~\ref{ass:phi} be satisfied. Let $k = \frac{T}{N}$, 
  $N \in \N$, 
  be an arbitrary equidistant step size and let $t_n = nk$, $n \in \{0,\dots, 
  N\}$. Then, for every $n \in \{1,\ldots,N\}$
   the estimate
  \begin{align}\label{eq2:errorRep}
    \begin{split}
      \E \big[ | X(t_n) - X^n|^2 \big] 
      &\le  k \sum_{i = 1}^n \E \big[ \inner{f(X^i) - f(X^{i-1})}{X^i -
      X^{i-1}}\big]\\
      &\quad + 2 \int_0^{t_n} \E \big[ \big\langle f( \overline{\X}(t)) -
      f(X(t)), g_0 \big( \mathcal{W}(t) - W(t)\big) \big\rangle \big] \diff{t}
    \end{split}
  \end{align}
  holds, where $(X(t))_{t \in [0,T]}$ and $(X^n)_{n \in \{0,\ldots,N\}}$ are the 
  solutions of 
  \eqref{eq2:SDE} and \eqref{eq2:bEuler}, respectively.
\end{lemma}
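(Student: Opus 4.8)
The plan is to sidestep stochastic calculus entirely by exploiting a cancellation at the grid points. First I would put both solutions in integral form. From \eqref{eq2:bEuler} together with \eqref{eq2:Xlin}--\eqref{eq2:Wlin} one checks that on each subinterval $\dot{\X}(t) = -f(\overline{\X}(t)) + g_0\dot{\mathcal{W}}(t)$, so that $\X(t) = X_0 - \int_0^t f(\overline{\X}(s))\diff{s} + g_0\mathcal{W}(t)$. Subtracting this from \eqref{eq2:SDEsol} and writing $e(t) := X(t) - \X(t)$ gives
\begin{align*}
  e(t) = r(t) + g_0\big(W(t) - \mathcal{W}(t)\big), \qquad
  r(t) := \int_0^t \big(f(\overline{\X}(s)) - f(X(s))\big)\diff{s},
\end{align*}
where $r$ is absolutely continuous with $r(0)=0$.

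The key observation is that $\mathcal{W}(t_n) = W(t_n)$ for every grid point, which is immediate from \eqref{eq2:Wlin}. Hence the noise contribution vanishes at $t_n$ and $e(t_n) = r(t_n)$; equivalently, the martingale part of the error cancels because the scheme \eqref{eq2:bEuler} and the exact equation \eqref{eq2:SDEsol} share the same increment $\Delta W^n$ on each step. Consequently $|X(t_n) - X^n|^2 = |r(t_n)|^2$ is a purely finite-variation quantity, which both removes the need for It\^o's formula and, crucially, circumvents the fact that $\mathcal{W}$ (and hence $e$) is not adapted. The fundamental theorem of calculus then yields $|X(t_n)-X^n|^2 = 2\int_0^{t_n} \inner{r(s)}{f(\overline{\X}(s)) - f(X(s))}\diff{s}$, and re-inserting $r(s) = e(s) + g_0(\mathcal{W}(s)-W(s))$ splits this into the term $2\int_0^{t_n}\inner{e(s)}{f(\overline{\X}(s)) - f(X(s))}\diff{s}$ plus exactly the second term on the right-hand side of \eqref{eq2:errorRep}. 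Taking expectations (justified by the a priori bounds of Lemma~\ref{lem:apriori2} and the moment bound on $X$, together with Fubini's theorem) disposes of the latter.

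The heart of the argument, and where I expect the main difficulty, is bounding the first term by $k\sum_{i=1}^n \inner{f(X^i)-f(X^{i-1})}{X^i - X^{i-1}}$. On a single subinterval $(t_{i-1},t_i]$, where $\overline{\X} = X^i$ and $\overline{\X}(s) - \X(s) = \frac{t_i - s}{k}(X^i - X^{i-1})$, I would write $e(s) = (X(s) - X^i) + (X^i - \X(s))$. The first part pairs with $f(X^i) - f(X(s))$ into $-D_i(s)$, where $D_i(s) := \inner{f(X^i)-f(X(s))}{X^i - X(s)} \ge 0$ by monotonicity \eqref{eq4:monotonicity}. For the second part I would further split $X^i - X^{i-1} = (X^i - X(s)) + (X(s) - X^{i-1})$: the $X(s) - X^{i-1}$ contribution is controlled by the Nochetto inequality \eqref{eq4:NochettoIneq} with $v = X^i$, $z = X(s)$, $w = X^{i-1}$, producing $\inner{f(X^i)-f(X^{i-1})}{X^i - X^{i-1}}$, while the $X^i - X(s)$ contribution is again $D_i(s)$. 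The subtle point is that this leftover $D_i(s)$ is absorbed with a favourable sign: collecting coefficients, its net multiple is $\frac{t_i - s}{k} - 1 = -\frac{s - t_{i-1}}{k} \le 0$, so it drops out. Integrating the surviving term via $\int_{t_{i-1}}^{t_i}\frac{t_i - s}{k}\diff{s} = \frac{k}{2}$ and summing over $i$ produces the factor $k$ and, after taking expectations, the desired first term, completing the proof.
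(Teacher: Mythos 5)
Your proposal is correct and follows essentially the same route as the paper: the same splitting $e = r + g_0(W-\mathcal{W})$ with cancellation of the noise part at grid points, the same chain-rule identity for $|r(t_n)|^2$ with re-substitution to produce the cross term, and the same combination of monotonicity \eqref{eq4:monotonicity} with the Nochetto inequality \eqref{eq4:NochettoIneq} applied to $(v,z,w)=(X^i,X(s),X^{i-1})$. Your regrouping via $e(s)=(X(s)-X^i)+\frac{t_i-s}{k}(X^i-X^{i-1})$ produces, after collecting the $D_i(s)$ coefficients, exactly the paper's convex-combination decomposition of $X(s)-\X(s)$, so the two arguments coincide term by term.
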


\begin{proof}
  From \eqref{eq2:bEuler} we directly deduce that for every $n \in
  \{1,\ldots,N\}$ 
  \begin{align*}
    X^n = X_0 - k \sum_{i = 1}^n f(X^{i}) + g_0 W(t_n).
  \end{align*}
  Then, one easily verifies for all $t \in (t_{n-1},t_n]$, $n \in
  \{1,\ldots,N\}$, that
  \begin{align*}
    \X(t) = X_0 - \int_0^t f(\overline{\X}(s)) \diff{s} + g_0 \mathcal{W}(t).
  \end{align*}
  Hence, due to \eqref{eq2:SDEsol} the error process $E := X - \X$ fulfills  
  \begin{align}
    \label{eq2:errorX}
    E(t) = \int_0^t f(\overline{\X}(s)) - f(X(s)) \diff{s} 
    + g_0 \big(W(t) - \mathcal{W}(t) \big) =: E_1(t) + E_2(t)
  \end{align}
  for all $t \in [0,T]$.
  Here, we have $E_2(t_n)= 0$, since
  $\mathcal{W}$ is an interpolant of $W$. Hence, 
  for all $n \in \{0,\ldots,N\}$, 
  \begin{align}
    \label{eq2:normError}
    |E(t_n)|^2 =  | E_1(t_n)|^2.
  \end{align}
  To estimate the norm of
  $E_1(t_n)$, we first note that $E_1$ has absolutely continuous
  sample paths with $E_1(0)=0$. Hence, 
  \begin{align*}
    \frac{1}{2} \frac{\mathrm{d}}{\diff{t}} |E_1(t)|^2
    =  \inner{\dot{E}_1(t)}{E_1(t)}
  \end{align*}
  is fulfilled for almost all $t \in [0,T]$.
  Therefore, by integration with respect to $t$, we get
  \begin{align}\label{eq:E1}
    \begin{split}
    \frac{1}{2} |E_1(t_n)|^2 &= \int_0^{t_n}
    \inner{\dot{E}_1(t)}{E_1(t)}\diff{t}
    \\ & = \int_0^{t_n}
    \inner{\dot{E}_1(t)}{E(t)}\diff{t}
    - \int_0^{t_n}
    \inner{\dot{E}_1(t)}{E_2(t)}\diff{t}.
    \end{split}
  \end{align}
  Next, we write
  \begin{align*}
    \X(t) 
    =  \frac{t - t_{n-1}}{k} \overline{\X}(t) + \frac{t_n - t}{k}
    \underline{\X}(t),
    \quad t \in (t_{n-1}, t_n], 
  \end{align*}
  and 
  use \eqref{eq4:monotonicity} and \eqref{eq4:NochettoIneq}
  to obtain, for almost every 
  $t \in (t_{n-1}, t_n]$, that
  \begin{align*}
    \inner{\dot{E}_1(t)}{E(t)} 
    &= \inner{f(\overline{\X}(t)) - f(X(t))}{X(t) - \X(t)}\\
    &=
      \frac{t - t_{n-1}}{k} 
    \inner{f(\overline{\X}(t)) - f(X(t))}{X(t) - \overline{\X}(t)}\\
    &\quad
      +
      \frac{t_n -t}{k}
      \inner{f(\overline{\X}(t)) - f(X(t))}{X(t) - \underline{\X}(t)}\\
    &\le \frac{t_n -t}{k} \inner{f(\overline{\X}(t)) -
    f(\underline{\X}(t))}{\overline{\X}(t) - \underline{\X}(t)}\\
    &= \frac{t_n -t}{k} \inner{f(X^n) -
    f(X^{n-1})}{X^n - X^{n-1}}.
  \end{align*}
  Furthermore, the expectation of the second integral on the right-hand side 
  of \eqref{eq:E1} is equal to
  \begin{align*}
    \E \Big[ \int_0^{t_n} \inner{\dot{E}_1(t)}{E_2(t)}\diff{t} \Big]
    &= \int_0^{t_n} \E \big[ \inner{f(\overline{\X}(t)) - f(X(t))}{g_0(W(t) -
    \mathcal{W}(t))} \big] \diff{t}.
  \end{align*}
  Therefore,
    \begin{align*}
    \E \big[ |E_1(t_n)|^2 \big] 
    &= 2 \int_0^{t_n} \E [ \inner{\dot{E}_1(t)}{E(t)} ] \diff{t}
    - 2 \int_0^{t_n} \E[ \inner{\dot{E}_1(t)}{E_2(t)} ] \diff{t}\\
    &\le 2 \sum_{i = 1}^n \int_{t_{i-1}}^{t_i} \frac{t_i -t}{k} \diff{t}
    \, \E\big[ \inner{f(X^i) - f(X^{i-1})}{X^i - X^{i-1}} \big]\\
    &\quad + 2 \int_0^{t_n} \E \big[ \inner{f(\overline{\X}(t)) -
    f(X(t))}{g_0(\mathcal{W}(t) - W(t))} \big] \diff{t}.
  \end{align*}
  Since $\int_{t_{i-1}}^{t_i} (t_i - t) \diff{t} = \frac{1}{2} k^2$ the assertion 
  follows. 
\end{proof}

The next lemma contains an estimate of the difference between the Wiener
process $W$ and its piecewise linear interpolant $\mathcal{W}$.

\begin{lemma}
  \label{lem:Werror}
  For every $g_0 \in \R^{d,m}$ and every step size $k = \frac{T}{N}$, $N \in
  \N$, the equality
  \begin{align}
    \label{eq5:errGL2const}
    \Big( \int_{0}^{T} \E [ |g_0 ( W(t) - \mathcal{W}(t)) |^2] \diff{t}
    \Big)^{\frac{1}{2}}
    = \frac{1}{\sqrt{6}} T^{\frac{1}{2}} | g_0 | k^{\frac{1}{2}} 
  \end{align}
  holds.
\end{lemma}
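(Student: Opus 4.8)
The plan is to compute the integral on the left-hand side explicitly, interval by interval, reducing everything to the variance of a Brownian bridge. First I would fix $n \in \{1,\ldots,N\}$ and $t \in (t_{n-1},t_n]$ and rewrite the interpolation error, using the definition \eqref{eq2:Wlin} of $\mathcal{W}$, as a weighted difference of the two adjacent increments,
\[
  W(t) - \mathcal{W}(t) = \frac{t_n - t}{k}\big( W(t) - W(t_{n-1}) \big) - \frac{t - t_{n-1}}{k}\big( W(t_n) - W(t) \big).
\]
The crucial structural observation is that $W(t) - W(t_{n-1})$ and $W(t_n) - W(t)$ are independent centered Gaussian vectors with covariance matrices $(t - t_{n-1}) I_m$ and $(t_n - t) I_m$, respectively.

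Next I would apply $g_0$ and take expectations. Since both increments are centered and independent, the cross term vanishes and
\[
  \E\big[ |g_0(W(t) - \mathcal{W}(t))|^2 \big] = \Big( \frac{t_n - t}{k} \Big)^2 \E\big[ |g_0(W(t) - W(t_{n-1}))|^2 \big] + \Big( \frac{t - t_{n-1}}{k} \Big)^2 \E\big[ |g_0(W(t_n) - W(t))|^2 \big].
\]
The one computation that rewards a moment of care is the identity $\E[|g_0 U|^2] = \sigma^2 |g_0|^2$ for a centered Gaussian $U$ with covariance $\sigma^2 I_m$; this follows by writing $|g_0 U|^2 = \Tr(g_0^\top g_0\, U U^\top)$ and using $\E[U U^\top] = \sigma^2 I_m$ together with the definition of the Frobenius norm $|g_0|^2 = \Tr(g_0^\top g_0)$. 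Inserting $\sigma^2 = t - t_{n-1}$ and $\sigma^2 = t_n - t$ for the two terms, then factoring out $(t - t_{n-1})(t_n - t)$ and using $(t_n - t) + (t - t_{n-1}) = k$, yields the clean pointwise formula
\[
  \E\big[ |g_0(W(t) - \mathcal{W}(t))|^2 \big] = \frac{|g_0|^2}{k}(t - t_{n-1})(t_n - t), \quad t \in (t_{n-1},t_n].
\]

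Finally I would integrate. On each subinterval the substitution $s = t - t_{n-1}$ gives $\int_{t_{n-1}}^{t_n} (t - t_{n-1})(t_n - t)\diff{t} = \int_0^k s(k-s)\diff{s} = k^3/6$, so each subinterval contributes $|g_0|^2 k^2/6$. Summing over the $N$ intervals and using $Nk = T$ produces
\[
  \int_0^T \E\big[ |g_0(W(t) - \mathcal{W}(t))|^2 \big]\diff{t} = |g_0|^2 \frac{N k^2}{6} = |g_0|^2 \frac{T k}{6},
\]
and taking square roots gives exactly $\tfrac{1}{\sqrt 6}\,T^{1/2} |g_0|\, k^{1/2}$. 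There is no genuine obstacle here: the argument is a direct second-moment computation, and the only point deserving attention is the correct bookkeeping of the matrix $g_0$ acting on the Gaussian increment through the trace/Frobenius identity, which is precisely what makes $|g_0|^2$ (rather than, say, the operator norm of $g_0$) the quantity that appears in the final constant.
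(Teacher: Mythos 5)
Your proposal is correct and follows essentially the same route as the paper: both decompose $W(t)-\mathcal{W}(t)$ into the two independent adjacent increments, kill the cross term by independence, and integrate the resulting polynomial over each subinterval (the paper invokes It\^o's isometry where you do the Gaussian trace computation directly, and it integrates the two symmetric terms separately instead of first combining them into $(t-t_{n-1})(t_n-t)/k$, but these are cosmetic differences). No gaps.
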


\begin{proof}
  From the definition \eqref{eq2:Wlin} of $\mathcal{W}$ it follows that
  \begin{align*}
    &\int_{0}^{T} \E [ |g_0 ( W(t) - \mathcal{W}(t)) |^2] \diff{t}\\
    &\quad = \sum_{n = 1}^N \int_{t_{n-1}}^{t_n} \E \Big[ \Big|g_0 \Big( W(t)
    - W(t_{n-1}) - \frac{t-t_{n-1}}{k} \Delta W^n \Big) \Big|^2 \Big] \diff{t}
    \\ 
    &\quad =  \sum_{n =1}^N \int_{t_{n-1}}^{t_n}
    \E \Big[ \Big| \frac{t_n - t}{k}  g_0 ( W(t) -
    W(t_{n-1})) - \frac{t-t_{n-1}}{k} g_0 (W(t_n) - W(t)) \Big|^2 \Big]
    \diff{t}\\
    &\quad = \sum_{n =1}^N \int_{t_{n-1}}^{t_n} \E \Big[ \Big| \frac{t_n -
    t}{k}  g_0 ( W(t) - W(t_{n-1})) \Big|^2 +
     \Big| \frac{t-t_{n-1}}{k} g_0 (W(t_n) -
    W(t))\Big|^2 \Big] \diff{t}\\
    &\quad = \frac{1}{k^2}\sum_{n =1}^N \Big(\int_{t_{n-1}}^{t_n} |g_0|^2 (t_n -
    t)^2 (t- t_{n-1}) \diff{t} +
    \int_{t_{n-1}}^{t_n} |g_0|^2 (t - t_{n-1})^2 (t_n- t) \diff{t}\Big),
  \end{align*}
  where we used that the two increments of the Wiener process are 
  independent for every $t \in (t_{n-1},t_n]$, $n \in \{1,\ldots, N\}$, and  we 
  also
  applied It\=o's isometry.
  By symmetry of the two terms it then follows that
  \begin{align*}
    \int_{0}^{T} \E [ |g_0 ( W(t) - \mathcal{W}(t)) |^2] \diff{t}
    &= \frac{1}{6} T |g_0|^2 k,
  \end{align*}
  and the proof is complete.
\end{proof}

The error estimates in Lemma~\ref{lem:err_rep2} and Lemma~\ref{lem:Werror}
allow us to determine the order of convergence of the backward Euler--Maruyama
method without relying on discrete Gronwall-type inequalities.
The following theorem imposes the additional assumption that the drift $f$ 
is H\"older continuous. We include the parameter value $\alpha = 0$, which simply
means that $f$ is continuous and globally bounded. The case of less regular $f$
is treated in Section~\ref{sec:aposteriori}.

Observe that we recover the standard rate $\frac{1}{2}$ if $\alpha = 1$, that
is, if the drift $f$ is assumed to be globally Lipschitz continuous. 
Compare also with the standard literature, for example,
\cite[Chap.~12]{kloeden1992} or \cite[Sect.~1.3]{milstein2004}. 

For processes $X \colon [0,T] \times \Omega \to \R^d$ and exponents
$\alpha\in [0,1]$, we define the family of H\"older semi-norms by
\begin{align*}
  C^{\alpha}([0,T];L^2(\Omega;\R^d))
  =\sup_{\stackrel{t,s\in [0,T]}{t\neq s}}
  \frac{\|X(t)-X(s)\|_{L^2(\Omega;\R^d)}}{|t-s|^{\alpha}}.  
\end{align*}

\begin{theorem}
  \label{thm:conv_Hoelder}
  Let $X_0 \in L^2(\Omega,\F_0,\P;\R^d)$ as well as  $g_0 \in \R^{d,m}$ be 
  given, let Assumption~\ref{ass:phi} be fulfilled and let $f = \nabla \Phi$
  be H\"older continuous with exponent $\alpha \in [0,1]$, 
  i.e., there exists $L_f \in ( 0,\infty)$ such that
  \begin{align*}
    | f(x) - f(y) | \leq L_f |x-y|^\alpha, \quad \text{ for all } x,y \in \R^d.
  \end{align*}
  Then there exists $C \in (0,\infty)$ such that for every step size $k =
  \frac{T}{N}$, $N \in \N$, the estimate 
  \begin{align*}
    \max_{n \in \{0,\ldots,N\}} \| X(t_n) - X^n \|_{L^2(\Omega;\R^d)}
    \le C k^{\frac{1 + \alpha}{4}}
  \end{align*}
holds,  where $(X(t))_{t \in [0,T]}$ and $(X^n)_{n \in \{0,\ldots,N\}}$ are the
  solutions to \eqref{eq2:SDE} and \eqref{eq2:bEuler}, respectively.
\end{theorem}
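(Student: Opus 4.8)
The plan is to start from the error representation in Lemma~\ref{lem:err_rep2} and to show that each of the two terms on the right-hand side of \eqref{eq2:errorRep} is bounded by $Ck^{(1+\alpha)/2}$, uniformly in $n$. Taking the maximum over $n$ and then a square root then yields the claimed rate $k^{(1+\alpha)/4}$. Throughout I would rely on two a priori facts: the summed increment bound $\sum_{i=1}^N\E[|X^i-X^{i-1}|^2]\le C$ coming from \eqref{eq2:aprioriX} (where $\Phi\ge 0$ is used to drop the second term on the left), and the temporal regularity $\|X(t)-X(s)\|_{L^2(\Omega;\R^d)}\le C|t-s|^{1/2}$ of the exact solution. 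The latter follows from \eqref{eq2:SDEsol} together with the sublinear growth $|f(x)|\le|f(0)|+L_f|x|^{\alpha}$ of the H\"older continuous drift and the moment bound $\sup_{t}\E[|X(t)|^2]\le C$.

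For the first term I would use the Cauchy--Schwarz inequality and the H\"older continuity of $f$ to get the pointwise bound $\langle f(X^i)-f(X^{i-1}),X^i-X^{i-1}\rangle\le L_f|X^i-X^{i-1}|^{1+\alpha}$. After taking expectations, Jensen's inequality (admissible since $(1+\alpha)/2\le 1$) and the discrete H\"older inequality with exponents $\frac{2}{1+\alpha}$ and $\frac{2}{1-\alpha}$ reduce the sum to $\sum_{i=1}^{n}(\E[|X^i-X^{i-1}|^2])^{(1+\alpha)/2}\le N^{(1-\alpha)/2}(\sum_{i=1}^{N}\E[|X^i-X^{i-1}|^2])^{(1+\alpha)/2}\le C\,N^{(1-\alpha)/2}$. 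Since this partial sum is monotone in $n$, multiplying by $k$ and inserting $N=T/k$ bounds the first term by $Ck^{(1+\alpha)/2}$ uniformly in $n$.

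The second term is the main obstacle. A direct application of Cauchy--Schwarz and Lemma~\ref{lem:Werror} forces one to estimate $\int_0^{t_n}\E[|f(\overline{\X}(t))-f(X(t))|^2]\diff t$, and because $\overline{\X}(t)-X(t)$ contains the error $X^i-X(t_i)$ itself, this produces a self-referential inequality of the form $\mathcal{E}\le Ck^{(1+\alpha)/2}+Ck^{1/2}\mathcal{E}^{\alpha/2}$ for $\mathcal{E}=\max_n\E[|X(t_n)-X^n|^2]$, whose solution only gives the suboptimal rate $k^{1/(2-\alpha)}$ when $\alpha\in(0,1)$. The decisive idea that removes this feedback is that, for $t\in(t_{i-1},t_i]$, the increment $g_0(\mathcal{W}(t)-W(t))=g_0(W(t_{i-1})-W(t))+\frac{t-t_{i-1}}{k}g_0\Delta W^i$ has vanishing conditional expectation given $\F_{t_{i-1}}$. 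Since $f(X^{i-1})-f(X(t_{i-1}))$ is $\F_{t_{i-1}}$-measurable and square integrable, I may subtract it inside the inner product without changing the expectation, thereby replacing the integrand by $\langle (f(X^i)-f(X^{i-1}))-(f(X(t))-f(X(t_{i-1}))),\,g_0(\mathcal{W}(t)-W(t))\rangle$. The first factor is now a second difference that no longer involves the global error: by H\"older continuity it is bounded in $L^2(\Omega;\R^d)$ by $L_f(\|X^i-X^{i-1}\|_{L^2(\Omega;\R^d)}^{\alpha}+\|X(t)-X(t_{i-1})\|_{L^2(\Omega;\R^d)}^{\alpha})\le L_f(\|X^i-X^{i-1}\|_{L^2(\Omega;\R^d)}^{\alpha}+Ck^{\alpha/2})$.

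Finally I would apply the Cauchy--Schwarz inequality, first pointwise in $\Omega$ and then in $t$, using Lemma~\ref{lem:Werror} for the factor $(\int_0^{t_n}\E[|g_0(\mathcal{W}(t)-W(t))|^2]\diff t)^{1/2}=\frac{1}{\sqrt 6}T^{1/2}|g_0|k^{1/2}$. The remaining integral $\int_0^{t_n}(\|X^i-X^{i-1}\|_{L^2(\Omega;\R^d)}^{\alpha}+Ck^{\alpha/2})^2\diff t$ is treated exactly as the first term: the numerical increments are controlled by the discrete H\"older inequality and \eqref{eq2:aprioriX}, giving $k\sum_i(\E[|X^i-X^{i-1}|^2])^{\alpha}\le Ck^{\alpha}$, while the consistency contribution is bounded by $\int_0^{t_n}Ck^{\alpha}\diff t\le Ck^{\alpha}$ using the $C^{1/2}$-regularity of $X$. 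Hence this factor is of order $k^{\alpha/2}$, and the second term is bounded by $C\,k^{\alpha/2}\cdot k^{1/2}=Ck^{(1+\alpha)/2}$. Combining the two estimates yields $\mathcal{E}\le Ck^{(1+\alpha)/2}$, from which the assertion follows after taking a square root. I expect the conditional-expectation subtraction to be the least routine and most essential step, precisely because without it the global error genuinely feeds back and the method appears to forfeit the optimal rate for $\alpha\in(0,1)$.
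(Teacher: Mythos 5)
Your proposal is correct and follows essentially the same route as the paper: the error representation of Lemma~\ref{lem:err_rep2}, the discrete H\"older/Jensen treatment of the increment sum via \eqref{eq2:aprioriX}, and---crucially---the same conditional-centering trick of subtracting the $\F_{t_{j-1}}$-measurable quantity $f(X^{j-1})-f(X(t_{j-1}))$ inside the inner product, which is exactly the paper's splitting of the second error term into $T_1$ and $T_2$. The remaining estimates via Lemma~\ref{lem:Werror} and the $C^{1/2}$-regularity of the exact solution coincide with the paper's.
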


\begin{proof}
  Since $f$ is assumed to be $\alpha$-H\"older continuous it follows that
  \begin{align*}
    |f(x)| \le \max(L_f,|f(0)|)( 1 + |x|^\alpha), \quad \text{for all $x \in \R^d$.}
  \end{align*}
  In particular, $f$ grows at most linearly. Therefore,
  as stated in \cite[Chap.~2, Thm~4.3]{mao2008}, the solution 
  $(X(t))_{t\in [0,T]}$ of \eqref{eq2:SDE} satisfies
  $X \in C^{\frac{1}{2}}([0,T];L^2(\Omega;\R^d))$.

  We will use Lemma~\ref{lem:err_rep2} to prove the error bound. To this 
  end, we first show that
  \begin{align}
  \label{eq5:H1}
    \begin{split}
      &k \sum_{i=1}^{N} \E \big[ \inner{f(X^i) - f(X^{i-1})}{X^i -
        X^{i-1}}\big]\\
      &\quad \leq L_f T^{\frac{1-\alpha}{2}}
      \Big( 2 \E \big[ |X_0|^2 \big] + 4 T \big( \Phi(0) + |g_0|^2 \big) 
      \Big)^{\frac{1+\alpha}{2}} k^{\frac{1+\alpha}{2}}.
    \end{split}
  \end{align}
  Indeed, we make use of the H\"older continuity of $f$ directly and 
  obtain
  \begin{align*}
    &k \sum_{i=1}^{N} \E \big[ \inner{f(X^i) - f(X^{i-1})}{X^i -
      X^{i-1}}\big]\\ 
    &\quad \leq \sum_{i=1}^{N} k \E \big[ |f(X^i) - f(X^{i-1})| |X^i -
    X^{i-1}|\big]\\ 
    &\quad \leq  L_f \sum_{i=1}^{N} k^{\frac{1}{q}} k^{\frac{1}{p}}
    \E \big[ |X^i - X^{i-1}|^{1 + \alpha}
    \big]\\
    &\quad \leq L_f \Big( \sum_{i = 1}^N k \Big)^{\frac{1}{q}} 
    \Big( k \sum_{i = 1}^N \E \big[ |X^i - X^{i-1}|^2 \big]
    \Big)^{\frac{1}{p}}, 
  \end{align*}
  where we also used H\"older's inequality with $p = \frac{2}{1 + \alpha} \in [1,2]$ and
  $\frac{1}{p} + \frac{1}{q} = 1$ as well as Jensen's inequality. Due
  to the \emph{a priori}
  estimate \eqref{eq2:aprioriX} the sum $\sum_{i=1}^{N} \E \big[ |X^i -
  X^{i-1}|^2 \big]$ is bounded independently of the step size $k$. Hence, we
  arrive at \eqref{eq5:H1}. 
  
  Therefore, it remains to estimate the second error term in Lemma~\ref{lem:err_rep2}:
    \begin{align}
    \label{eq2:term2}
    \begin{split}
      &\int_0^{t_n} \E \big[
      \big\langle f(\overline{\X}(t)) - f(X(t)), g_0 
    (\mathcal{W}(t)
    - W(t) ) \big\rangle \big] \diff{t}\\
    &\quad = \sum_{j = 1}^n \int_{t_{j-1}}^{t_j}
    \E \big[ \big\langle f(X^j) - f(X(t)), g_0 (\mathcal{W}(t)
    - W(t) ) \big\rangle \big] \diff{t},
    \end{split}
  \end{align}
  where we inserted the definition of $\overline{\X}$ from
  \eqref{eq2:Xconst}. 
  Moreover, from \eqref{eq2:Wlin} we get
  \begin{align*}
    g_0 (\mathcal{W}(t) - W(t) ) = \frac{t - t_{j-1}}{k} g_0 \Delta
    W^j - g_0 ( W(t) - W(t_{j-1}) ) 
  \end{align*}
  for $t \in (t_{j-1},t_j]$.  Hence, the random variable in the second
  slot of the inner product on the right-hand side \eqref{eq2:term2}
  is centered and is independent of any $\F_{t_{j-1}}$-measurable 
  random variable. Thus, we may write
  \begin{align*}
    &\sum_{j = 1}^n \int_{t_{j-1}}^{t_j}
    \E \big[ \big\langle f(X^j) - f(X(t)), g_0 (\mathcal{W}(t) - W(t) )
    \big\rangle \big] \diff{t}\\
    &\quad = \sum_{j = 1}^n \int_{t_{j-1}}^{t_j}
    \E \big[ \big\langle f(X^j) - f(X^{j-1}),g_0 (\mathcal{W}(t) - W(t) )
    \big\rangle \big] \diff{t}\\
    &\qquad + \sum_{j = 1}^n \int_{t_{j-1}}^{t_j}
    \E \big[ \big\langle f(X(t_{j-1})) - f(X(t)), 
    g_0 (\mathcal{W}(t) - W(t) ) \big\rangle \big] \diff{t} =: T_1 + T_2.
  \end{align*}
  To estimate $T_1$ we first recall the definitions of $\underline{\X}$ and
  $\overline{\X}$ from \eqref{eq2:Xconst}. Then we
  apply the Cauchy--Schwarz inequality and obtain
  \begin{align*}
    T_1 &= \int_0^{t_n} \E \big[ \langle f(\overline{\X}(t)) -
    f(\underline{\X}(t)), g_0 (\mathcal{W}(t)
    - W(t) ) \rangle \big] \diff{t}\\
    &\le \Big( \int_0^{t_n} \E \big[  | f(\overline{\X}(t)) -
    f(\underline{\X}(t)) |^2 \big] \diff{t} \Big)^{\frac{1}{2}}
    \Big( \int_0^{t_n} \E \big[  | g_0 (\mathcal{W}(t)
    - W(t) ) |^2 \big] \diff{t} \Big)^{\frac{1}{2}}.
  \end{align*}
  From the H\"older continuity of $f$ we then deduce that 
  \begin{align*}
    \int_0^{t_n} \E \big[  | f(\overline{\X}(t)) -
    f(\underline{\X}(t)) |^2 \big] \diff{t}
    &\le L_f^2 k \sum_{i = 1}^N \E \big[  | X^{i} - X^{i-1}|^{2 \alpha} \big]\\
    &\le L_f^2 T^{\frac{1}{q}}
    \Big(k \sum_{i =1}^N \E \big[  | X^{i} - X^{i-1}|^{2} \big]
    \Big)^{\alpha},
  \end{align*}
  where the last inequality is in fact an equality if $\alpha = 1$,
  $\frac{1}{q} = 0$ or if $\alpha = 0$, $\frac{1}{q}=1$. Otherwise
  the inequality follows from  H\"older's
  inequality with $p = \frac{1}{\alpha} \in (1,\infty)$ and
  $\frac{1}{p} + \frac{1}{q} = 1$, followed by an application of Jensen's
  inequality. Furthermore, Lemma~\ref{lem:Werror} states that
  \begin{align}
    \label{eq2:Wint}
    \Big( \int_0^T \E \big[ | g_0 (\mathcal{W}(t)
    - W(t) ) |^2 \big] \diff{t} \Big)^{\frac{1}{2}}
    = \frac{1}{\sqrt{6}} T^{\frac{1}{2}} |g_0| k^{\frac{1}{2}}.
  \end{align}
  Therefore, together with \eqref{eq2:aprioriX} we arrive at the estimate 
  \begin{align*}
    T_1 \le \frac{1}{\sqrt{6}} L_f T^{\frac{2 - \alpha}{2}} |g_0| \Big( 
    2 \E \big[ |X_0|^2 \big] + 4 T \big( \Phi(0) + |g_0|^2 \big)
    \Big)^{\frac{\alpha}{2}} k^{\frac{1 + \alpha}{2}}
  \end{align*}
  for all $n \in \{1,\ldots,N\}$.
  
  The estimate of $T_2$ works similarly by additionally making use of the
  H\"older continuity of the exact solution. To be more precise, we have 
  that 
  \begin{align*}
    \sum_{i =1}^n \int_{t_{i-1}}^{t_i} \E \big[  | f(X(t_{i-1})) -
    f(X(t)) |^2 \big] \diff{t}
    &\le L_f^2 \sum_{i = 1}^N \int_{t_{i-1}}^{t_i} 
    \E \big[  | X(t_{i-1}) - X(t) |^{2 \alpha} \big] \diff{t}\\
    &\le L_f^2 \sum_{i = 1}^N \int_{t_{i-1}}^{t_i} 
    \big( \E \big[  | X(t_{i-1}) - X(t) |^{2} \big] \big)^{\alpha} \diff{t}\\
    &\le L_f^2 T \|X \|_{C^{\frac{1}{2}}([0,T];L^2(\Omega;\R^{d}))}^{2\alpha}
    k^{\alpha}.    
  \end{align*}
  Together with the Cauchy--Schwarz inequality and 
  \eqref{eq2:Wint}, we therefore obtain
  \begin{align*}
    T_2 \le \frac{1}{\sqrt{6}} L_f T |g_0|
    \|X \|_{C^{\frac{1}{2}}([0,T];L^2(\Omega;\R^{d}))}^\alpha k^{\frac{1 + 
    \alpha}{2}}.
  \end{align*}
  Inserting the estimates for $T_1$, $T_2$ and \eqref{eq5:H1}
  into Lemma~\ref{lem:err_rep2} completes the proof.
\end{proof}

\begin{remark}
  \label{rem:errconst}
  The precise form of the constant $C$ appearing in
  Theorem~\ref{thm:conv_Hoelder} is, after taking squares,  
  \begin{align*}
    C^2 &= L_f T^{\frac{1-\alpha}{2}} C_0^{\frac{1+\alpha}{2}} 
    + \frac{1}{\sqrt{6}} L_f T^{\frac{2 - \alpha}{2}}
    |g_0| \big( C_0^{\frac{\alpha}{2}} 
    + T^{\frac{\alpha}{2}} \|X
    \|_{C^{\frac{1}{2}}([0,T];L^2(\Omega;\R^{d}))}^\alpha \big)
  \end{align*}
  with $C_0 = 2 \E [ |X_0|^2 ] + 4 T ( \Phi(0) + |g_0|^2 )$.
 
  Observe that, since we avoid the use of Gronwall-type
  inequalities, the error constant does not grow exponentially with
  time $T$. This indicates that the backward Euler--Maruyama method is
  particularly suited for long-time simulations as is often required
  in Markov-chain Monte Carlo methods, for example, in the unadjusted
  Langevin algorithm \cite{roberts1996}.
\end{remark}

%
%

\section{Properties of the exact solution}
\label{sec:exact}

In this section, we turn our attention to the multi-valued stochastic 
differential equation (MSDE)  in \eqref{eq1:SDE}. We give a complete 
account of the assumptions
imposed on the coefficient functions. In addition, we collect some results on
the existence and uniqueness of a strong solution to the MSDE. We also include
useful results on higher moment bounds of the exact solution. 


\begin{assumption}
  \label{ass:f}
  The set valued mapping $f \colon \R^d \to 2^{\R^d}$ is maximal
  monotone with $\operatorname{int} D(f) \neq \emptyset$.  Moreover,
  there exist constants $\beta, \lambda \in [0,\infty)$,
  $\mu \in (0,\infty)$, and $p \in [1,\infty)$ such that
  \begin{align*}
    \inner{f_v}{v} \geq \mu |v|^p - \lambda
    \quad\text{and}\quad
    |f_v| \leq \beta(1 + |v|^{p-1})
  \end{align*}
  for every $v \in D(f)$ and $f_v
  \in f(v)$.
\end{assumption}

\begin{assumption}
  \label{ass:b}
  The function $b \colon \R^d \to \R^d$ is Lipschitz continuous; i.e., 
  there exists a constant $L_b \in [0,\infty)$ such that
  \begin{align*}
    |b(v) - b(w)| \leq L_b |v-w|
  \end{align*} 
  for all $v,w \in \R^d$.
\end{assumption}

\begin{assumption}
  \label{ass:g}
  The function $g \colon \R^d \to \R^{d,m}$ is Lipschitz continuous; i.e., 
  there exists a constant $L_g \in [0,\infty)$ such that
  \begin{align*}
    |g(v) - g(w)| \leq L_g |v-w|
  \end{align*} 
  for all $v,w \in \R^d$.
\end{assumption}

\begin{assumption}
  \label{ass:X0}
  The initial value $X_0$ is an $\F_0$-measurable and
  $D(f)$-valued random variable. Furthermore,
  \begin{align*}
    \E[ |X_0|^{\max(2p - 2,2)} ] < \infty,
  \end{align*}
  where the value of $p$ is the same as 
  in Assumption~\ref{ass:f}.
\end{assumption}

Observe that Assumptions~\ref{ass:b} and \ref{ass:g} directly imply
that $b$ and $g$ grow at most linearly. More precisely, after possibly
increasing the values of $L_b$ and $L_g$, we obtain the bounds
\begin{align}
  \label{eq4:lingrowthbg}
  |b(v)| \le L_b (1 + |v|),\quad
  |g(v)| \le L_g (1 + |v|),
\end{align}
for all $v \in \R^d$. 

\begin{remark}  \label{rem:zeroindf}
  Without loss of generality we will assume that $0 \in D(f)$. 
  Otherwise, since the graph of $f$ is not empty, we take $v_0 \in D(f)$ and
  $f_{v_0} \in f(v_0)$ and replace $f$, $b$, and $g$ by suitably
  shifted mappings, for instance, $\tilde{f}(v) := f(v + v_0)$. Then
  $0 \in D(\tilde{f})$ holds. Compare further with \cite[Abschn.~3.3.3]{ruzicka2004}.
\end{remark}

Next, we introduce the notion of a solution of \eqref{eq1:SDE}, 
which we use for the remainder of this paper.

\begin{definition}
  \label{def:solution}
  A tuple $(X,\eta)$ is called a solution of the multi-valued stochastic
  differential equation \eqref{eq1:SDE}, if the following conditions
  hold. 
  \begin{itemize}
    \item[(i)] The mapping $X \colon [0,T] \times \Omega \to \R^d$ is
      an $(\F_t)_{t \in[0,T]}$-adapted, almost surely continuous
      stochastic process such that $X(t) \in
      \overline{D(f)}$ for all $t \in (0,T]$ with probability one.
    \item[(ii)] The mapping $\eta \colon [0,T] \times \Omega \to \R^d$ 
      is an $(\F_t)_{t \in[0,T]}$-adapted stochastic process such that
      \begin{align*}
        \int_0^T |\eta(t)| \diff{t} < \infty, \quad \P\text{-almost surely.}
      \end{align*}
    \item[(iii)] The equality 
      \begin{align}        
        X(t) + \int_{0}^{t} \eta(s) \diff{s} = X_0 + \int_{0}^{t} b(X(s))
        \diff{s} + \int_{0}^{t} g(X(s)) \diff{W(s)} 
        \label{eq:sol}
      \end{align}
      holds for all $t \in [0,T]$ and $\P$-almost surely.
    \item[(iv)] For almost all $\omega \in \Omega$ and $t \in [0,T]$,
      it follows that $\eta(t,\omega) \in f(X(t,\omega))$; in other words, for 
      every $y \in D(f)$ and $f_y \in f(y)$ the inequality
    \begin{align*}
      \inner{\eta(t) - f_{y}}{X(t) - y} \geq 0
    \end{align*}
  is satisfied  for almost every $t \in [0,T]$ and $\P$-almost
  surely, cf.~Definition~\ref{def:mono}.
  \end{itemize}
\end{definition}

This notion of a solution has been considered in, for example, 
\cite{Cepa1995}, \cite{kree1982}, \cite{pettersson1995}, and 
\cite{stephanPhd}, where also the existence of a unique solution 
is shown. Due to their importance for the error analysis, we next
prove certain moment estimates.

\begin{theorem}
  \label{thm:exact}
  Let Assumptions~\ref{ass:f} and \ref{ass:X0} be satisfied with $p \in
  [1,\infty)$.
  Then there exists a unique solution $(X,\eta)$ of \eqref{eq1:SDE} 
  in the sense of Definition~\ref{def:solution}.   There is a
  constant $C\in(0,\infty)$ such that 
  \begin{align*}
    \sup_{t \in [0,T]} \E \big[ |X(t)|^2 \big] +
    \E \Big[ \int_0^T |X(s)|^p \diff{s} \Big] \le C. 
  \end{align*}
  Furthermore, if $p \in (1,\infty)$ and $\frac{1}{p}+\frac{1}{q} = 1$, then 
    \begin{align*}
    \E \Big[ \int_0^T |\eta(s)|^q \diff{s} \Big] \le C.  
  \end{align*}
\end{theorem}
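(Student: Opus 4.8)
The plan is to take existence and uniqueness of the solution $(X,\eta)$ from the literature (the references \cite{Cepa1995, kree1982, pettersson1995, stephanPhd} cited above, which use precisely the notion of Definition~\ref{def:solution} and additionally rely on the Lipschitz Assumptions~\ref{ass:b} and \ref{ass:g}), so that the real work is to establish the moment bounds for this given solution. Since $X$ satisfies \eqref{eq:sol}, it is a semimartingale with $\diff{X(t)} = \big(b(X(t)) - \eta(t)\big)\diff{t} + g(X(t))\diff{W(t)}$, and I would apply It\^o's formula to $|X(t)|^2$ to obtain
\begin{align*}
  |X(t)|^2 &= |X_0|^2 + \int_0^t \big( -2\inner{\eta(s)}{X(s)} + 2\inner{b(X(s))}{X(s)} + |g(X(s))|^2 \big)\diff{s}\\
  &\quad + 2\int_0^t \inner{X(s)}{g(X(s))\diff{W(s)}}.
\end{align*}
Into this I insert the coercivity bound $\inner{\eta(s)}{X(s)} \ge \mu|X(s)|^p - \lambda$ from Assumption~\ref{ass:f} (valid for a.e.\ $s$, since $\eta(s) \in f(X(s))$ forces $X(s) \in D(f)$) and the linear growth \eqref{eq4:lingrowthbg} of $b$ and $g$, estimating $2\inner{b(X)}{X}$ and $|g(X)|^2$ by $C(1+|X|^2)$ via Cauchy--Schwarz and Young. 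The crucial point is to \emph{keep} the coercivity term on the left-hand side, leading to
\begin{align*}
  |X(t)|^2 + 2\mu\int_0^t|X(s)|^p\diff{s} \le |X_0|^2 + C\int_0^t\big(1 + |X(s)|^2\big)\diff{s} + M(t),
\end{align*}
where $M(t) = 2\int_0^t\inner{X(s)}{g(X(s))\diff{W(s)}}$ is a local martingale.

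Because $\eta$ is only known to be integrable and the second moment of $X$ is not yet under control, I would not take expectations directly but first localize with the stopping times $\tau_R = \inf\{t\in[0,T] : |X(t)| \ge R\}$. On $[0,t\wedge\tau_R]$ the integrand of $M$ is bounded, so $\E[M(t\wedge\tau_R)] = 0$, and after using $\one_{[0,\tau_R]}(s)|X(s)|^2 \le |X(s\wedge\tau_R)|^2$ the stopped inequality yields, with $u_R(t) := \E[|X(t\wedge\tau_R)|^2]$,
\begin{align*}
  u_R(t) \le \E[|X_0|^2] + CT + C\int_0^t u_R(s)\diff{s},
\end{align*}
where $\E[|X_0|^2] < \infty$ by Assumption~\ref{ass:X0} since $\max(2p-2,2) \ge 2$. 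A deterministic Gronwall argument bounds $u_R(t)$ uniformly in $R$ and $t$ (alternatively one may invoke Lemma~\ref{lem:StochGronwall} for this step), and letting $R\to\infty$, with $\tau_R \to \infty$ almost surely by path continuity, together with Fatou's lemma gives $\sup_{t\in[0,T]}\E[|X(t)|^2] \le C$. Feeding this bound back into the retained coercivity term and applying monotone convergence to $\int_0^{t\wedge\tau_R}|X(s)|^p\diff{s}$ produces $\E[\int_0^T|X(s)|^p\diff{s}] \le C$; keeping the coercivity term on the left is exactly what makes this work uniformly in $p \in [1,\infty)$, with no separate treatment of the cases $p \le 2$ and $p > 2$.

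For the bound on $\eta$ when $p \in (1,\infty)$, I would use that $\eta(s) \in f(X(s))$ together with the growth condition $|f_v| \le \beta(1+|v|^{p-1})$. Raising this to the power $q$ and using $(p-1)q = p$ gives $|\eta(s)|^q \le C(1 + |X(s)|^p)$ almost everywhere, whence
\begin{align*}
  \E\Big[\int_0^T|\eta(s)|^q\diff{s}\Big] \le C\Big(T + \E\Big[\int_0^T|X(s)|^p\diff{s}\Big]\Big) \le C,
\end{align*}
by the $L^p$-bound just established. I expect the main obstacle to be the rigorous justification of the localization: one must verify that It\^o's formula applies despite $\eta$ being merely integrable (which is fine because that part of the drift is of finite variation and $\int_0^T|g(X(s))|^2\diff{s}<\infty$ almost surely by path continuity) and that the stopped stochastic integral is a genuine martingale so that its expectation vanishes before the passage to the limit $R\to\infty$.
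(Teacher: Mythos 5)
Your proposal is correct and follows essentially the same route as the paper: It\^o's formula applied to $|X(t)|^2$, the coercivity of $f$ retained on the left-hand side to control both the second moment and the $L^p$-in-time bound simultaneously, the linear growth of $b$ and $g$, a Gronwall argument, and finally the polynomial growth bound $|f_v|\le\beta(1+|v|^{p-1})$ with $(p-1)q=p$ for the estimate on $\eta$. The only difference is that you carry out the localization by stopping times and deterministic Gronwall plus Fatou by hand, whereas the paper packages exactly this step into the stochastic Gronwall inequality of Lemma~\ref{lem:StochGronwall} (an alternative you yourself note), so the two arguments are interchangeable.
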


\begin{proof}
  Existence and uniqueness is shown, for instance, in \cite{kree1982}.
  For 
  \begin{align*}
    X(t) = X_0 + \int_{0}^{t} (b(X(s)) - \eta(s)) \diff{s}
    + \int_{0}^{t} g(X(s)) \diff{W(s)}
  \end{align*}
  the equality
  \begin{align*}
    |X(t)|^2 
    &= |X_0|^2 + \int_{0}^{t} \big(2 \inner{b(X(s))}{X(s)}
    - 2 \inner{\eta(s)}{X(s)} 
    + |g(X(s))|^2\big) \diff{s}\\
    &\quad + \int_{0}^{t} 2\inner{X(s)}{g(X(s))\diff{W(s)}},
  \end{align*}
  holds by an application of It\=o's formula (see \cite[Chap.~4.7, 
  Theorem~7.1]{friedman1975}).
  From the coercivity assumption on $f$ we obtain that
  \begin{align*}
    \inner{f_{X(s)}}{X(s)} \geq \mu |X(s)|^p - \lambda
  \end{align*}
  for every $f_{X(s)} \in f(X(s))$ and almost every $s \in [0,T]$. The fact  
  that $\eta(s) \in f(X(s))$ for almost every $s \in [0,T]$ then implies that
  \begin{align*}
    \int_{0}^{t} \inner{\eta(s)}{X(s)} \diff{s}
    \geq  \mu \int_{0}^{t} |X(s)|^p \diff{s} - \lambda t.
  \end{align*}
  Since $b$ and $g$ satisfies the linear growth bound 
  \eqref{eq4:lingrowthbg}, we have
    \begin{align*}
    \int_{0}^{t} \inner{b(X(s))}{X(s)}\diff{s}
    \leq 2 L_b \int_{0}^{t} \big(1 +|X(s)|^2\big) \diff{s}
  \end{align*}
  as well as
  \begin{align*}
    \int_{0}^{t} |g(X(s))|^2 \diff{s}
    \le 2 L_g^2 \int_{0}^{t} \big( 1+  |X(s)|^2\big) \diff{s}.
  \end{align*}
  Thus, we get
  \begin{align*}
    &|X(t)|^2 + 2 \mu \int_{0}^{t} |X(s)|^p \diff{s} \\
    &\quad \leq |X_0|^2 + \big(4 L_b + 2 L_g^2 \big) 
    \int_{0}^{t} \big(1 +  |X(s)|^2 \big) \diff{s}
    + 2 \lambda t 
    + \int_{0}^{t} 2\inner{X(s)}{g(X(s)) \diff{W(s)}}.
  \end{align*}
  We introduce
  \begin{align*}
    Z(t) &:= |X(t)|^2 + 2 \mu \int_{0}^{t} |X(s)|^p \diff{s}, \quad
    M(t) := \int_{0}^{t} 2\inner{X(s)}{g(X(s)) \diff{W(s)}}, \\
    \xi(t) &:= |X_0|^2 + 2 ( \lambda + 2 L_b + L_g^2 ) t,
    \quad \varphi(t): = 4 L_b + 2 L_g^2.
  \end{align*}
  Then $Z$, $M$, and $\xi$ are $(\F_t)_{t\in[0,T]}$-adapted and almost surely 
  continuous stochastic processes. Furthermore,  $M$ is a local 
  $(\F_t)_{t\in[0,T]}$-martingale with $M(0) = 0$. Thus, an application of 
  Lemma~\ref{lem:StochGronwall} yields, for every $t \in [0,T]$, that
  \begin{align*}
    \E \big[Z(t) \big] 
    &\leq \exp\Big(\int_{0}^{t} \varphi(s) \diff{s} \Big) 
    \E\big[ \sup_{s\in[0,t]} \xi(s) \big]\\
    &= \exp\big( (4 L_b + 2 L_g^2) t \big) \big( 
    \E\big[|X_0|^2\big]
    + 2 ( \lambda + 2 L_b + L_g^2 ) t \big).
  \end{align*}
  Inserting the definition of $Z$ then proves the first 
  estimate.
  
  Furthermore, if Assumption~\ref{ass:f} holds with $p \in (1,\infty)$, then
  we have, for every $f_{x} \in f(x)$, $x \in \R^d$, that
  \begin{align*}
    |f_{x} | \leq \beta(1 + |x|^{p-1}),
  \end{align*}
  with $q = \frac{p}{p-1}$. Therefore, it follows that 
  \begin{align*}
    \Big(\int_{0}^{T} \E\big[ |\eta(s)|^q \big] \diff{s}\Big)^{\frac{1}{q}}
    \leq T^{\frac{1}{q}} \beta
    + \beta \Big(\int_{0}^{T} \E\big[ |X(s)|^p \big] \diff{s}\Big)^{\frac{1}{q}}
    \leq C
  \end{align*}
  since $\eta(s) \in f(X(s))$ for almost every $s \in [0,T]$.
\end{proof}

\begin{remark}  \label{rem:differentsolution}
  Let us mention that, for instance, in \cite[Chapter~4]{pardoux2014} and the
  references therein, a weaker notion of a solution to \eqref{eq1:SDE} is
  found. More precisely, if $(X,\eta)$ is a solution in the
  sense of Definition~\ref{def:solution}, then $(X,H)$ is a solution in the sense
  of \cite[Chapter~4]{pardoux2014} with the definition
  \begin{align*}
    H(t) := \int_0^t \eta(s) \diff{s}, \quad t \in [0,T].
  \end{align*}  
  In particular, the process $H$ is a continuous, progressively
  measurable process with bounded total variation and $H(0) = 0$ almost surely.
  The stronger condition of absolute continuity of the process $H$,
  which is required in Definition~\ref{def:solution},  
  is essential in the proof of Theorem~\ref{thm:conv} below.
  This explains why we work with the stronger notion of a solution in
  Definition~\ref{def:solution}.
\end{remark}

%
%

\section{Well-posedness of the backward Euler method}
\label{sec:wellposed}

In this section, we show that the backward Euler--Maruyama method  
\eqref{eq1:multiEuler} for the MSDE \eqref{eq1:SDE} is well-posed
under the same assumptions as in the previous section.

\begin{lemma}
  \label{lem:solvableF}
  Let Assumptions~\ref{ass:f} and \ref{ass:b} be satisfied.
  Furthermore, let $w \in \R^d$ and $k \in (0,T]$ be given with
  $L_b k \in [0,1)$. Then there exist uniquely determined
  $x_0 \in D(f)$ and $\eta_{x_0} \in f(x_0)$, which satisfy the
  nonlinear equation
  \begin{align} 
    \label{eq2:nonlinearF}
    x_0 + k \eta_{x_0} - k b(x_0) = w.
  \end{align}
\end{lemma}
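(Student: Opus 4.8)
The statement asks us to solve the nonlinear inclusion $x_0 + k\eta_{x_0} - kb(x_0) = w$ with $\eta_{x_0} \in f(x_0)$, given $L_b k \in [0,1)$. Let me think about how to reduce this to a standard result about maximal monotone operators.

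The key observation is that the map $v \mapsto v - kb(v)$ is strongly monotone when $L_b k < 1$. Indeed, by Lipschitz continuity of $b$,
$$\langle (v - kb(v)) - (w - kb(w)), v - w\rangle = |v-w|^2 - k\langle b(v)-b(w), v-w\rangle \ge (1 - L_b k)|v-w|^2.$$

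The nonlinear equation can be reformulated as finding $x_0 \in D(f)$ such that $w \in x_0 - kb(x_0) + k f(x_0)$, i.e. $w \in F(x_0)$ where $F := I - kb + kf$.

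The plan is to proceed as follows. First I would define the single-valued, Lipschitz map $\beta(v) := v - kb(v)$ and verify it is strongly monotone with constant $(1-L_b k) > 0$ and Lipschitz, hence continuous and coercive. Then I consider the operator $F := \beta + kf$. Since $f$ is maximal monotone and $\beta$ is a (single-valued) continuous, monotone, coercive operator defined on all of $\R^d$, the sum $F$ is again maximal monotone. This is the standard perturbation result for maximal monotone operators: adding a single-valued monotone continuous operator that is everywhere defined preserves maximal monotonicity (see, e.g., the references on maximal monotone operators cited earlier, such as [Winkert2018] or [ruzicka2004]). Moreover, $F$ is coercive in the sense that $\langle F_v, v\rangle / |v| \to \infty$ as $|v| \to \infty$, which follows by combining the strong monotonicity of $\beta$ with the coercivity bound $\langle f_v, v\rangle \ge \mu|v|^p - \lambda$ from Assumption~\ref{ass:f} (or at the very least, monotonicity of $f$ together with strong monotonicity of $\beta$ already yields coercivity of $F$). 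A coercive maximal monotone operator is surjective (Minty--Browder), so there exists $x_0 \in D(f)$ and $\eta_{x_0} \in f(x_0)$ with $x_0 - kb(x_0) + k\eta_{x_0} = w$.

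For uniqueness, suppose $x_0, x_1$ are both solutions with respective selections $\eta_{x_0}, \eta_{x_1}$. Subtracting the two equations and pairing with $x_0 - x_1$ gives
$$0 = \langle (x_0 - x_1) - k(b(x_0)-b(x_1)), x_0 - x_1\rangle + k\langle \eta_{x_0} - \eta_{x_1}, x_0 - x_1\rangle.$$
The first term is bounded below by $(1-L_b k)|x_0 - x_1|^2$ by strong monotonicity of $\beta$, and the second is nonnegative by monotonicity of $f$. Hence $(1-L_b k)|x_0-x_1|^2 \le 0$, forcing $x_0 = x_1$ since $1 - L_b k > 0$. Uniqueness of $\eta_{x_0}$ is then immediate from the equation \eqref{eq2:nonlinearF}, as $\eta_{x_0} = \tfrac{1}{k}(w - x_0 + kb(x_0))$ is determined.

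The main obstacle is the surjectivity (existence) step. I expect uniqueness to be routine via the strong monotonicity estimate above, but existence requires invoking the correct surjectivity theorem for maximal monotone operators and carefully checking that the hypotheses (maximal monotonicity of the sum $F$, and its coercivity) are genuinely met under Assumptions~\ref{ass:f} and \ref{ass:b}. The delicate points are: verifying that $\operatorname{int} D(f) \neq \emptyset$ (assumed in Assumption~\ref{ass:f}) suffices to make the perturbation sum maximal monotone, and confirming coercivity of $F$ holds even in the edge case $p = 1$, where the coercivity bound on $f$ is merely linear; here the strong monotonicity of $\beta$ should be what actually drives the coercivity of $F$, so I would lean on that rather than on the growth of $f$.
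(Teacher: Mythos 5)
Your proposal is correct and follows essentially the same route as the paper: the authors also write the problem as $(\id + kf - kb)(x_0)\ni w$, verify that $\id - kb$ is single-valued, Lipschitz, monotone and coercive under the step-size restriction $L_bk<1$, invoke a surjectivity/perturbation theorem for maximal monotone operators (they cite Theorem~2.1 of Barbu, which packages the Minty--Browder argument you describe) to get existence, and prove uniqueness by testing the difference of two solutions with $x_1-x_2$ exactly as you do. The only cosmetic difference is that you spell out the maximal monotonicity of the sum and the coercivity discussion explicitly rather than delegating them to the cited theorem.
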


\begin{proof}
  We first show that there exists a unique $x_0\in D(f)$ such that
  \begin{align}\label{eq2:inclusionF}
    x_0 + k f(x_0) - k b(x_0) = (\id + k f - k b)(x_0)\ni w.
  \end{align}
  To this end, notice that for all $x,y \in \R^d$, the inequalities
  \begin{align*}
    \inner{(\id -k b)x - (\id -k b)y}{x-y}
    \geq |x-y|^2 - k L_b |x-y|^2 \geq 0
  \end{align*}
  hold due to the step-size bound. In addition, it follows from
  \eqref{eq4:lingrowthbg} that
  \begin{align*}
    \frac{\inner{(\id -k b)x}{x}}{|x|} = \frac{|x|^2 - k \inner{b(x)}{x}}{|x|}
    \ge (1 - k L_b) |x| - k L_b 
  \end{align*}
  for all $x \in \R^d$. Hence, $(\id + k f - k b)$ is the sum 
  of the maximal monotone operator $kf$ and the mapping $(\id - k b)$, which is
  single-valued, Lipschitz continuous, monotone and coercive. 
  
  Thus, we can apply \cite[Theorem~2.1]{barbu.2010} and obtain the existence of
  $x_0 \in D(f)$ such that \eqref{eq2:inclusionF} holds. Furthermore, there 
  necessarily exists a corresponding unique element $\eta_{x_0} \in f(x_0)$ with
  \begin{align*}
    \eta_{x_0} = \frac{1}{k} (w - x_0) + b(x_0).
  \end{align*}
  It remains to prove the uniqueness of $x_0$, which directly implies the 
  uniqueness of $\eta_{x_0}$. Assume that there exist $x_1 \in D(f)$ and 
  $\eta_{x_1} \in f(x_1)$ as well as $x_2 \in D(f)$ and $\eta_{x_2} \in f(x_2)$
  such that 
  \begin{align*}
    x_1 + k \eta_{x_1} - k b(x_1) = w, \quad 
    x_2 + k \eta_{x_2} - k b(x_2) = w.
  \end{align*}
  By considering the difference of these equations tested with $x_1- 
  x_2$, we obtain
  \begin{align*}
    0 &= \inner{x_1 - x_2}{x_1 - x_2} + k \inner{\eta_{x_1} - \eta_{x_2}}{x_1 - 
    x_2} - k \inner{b(x_1) - b(x_2) }{x_1 - x_2} \\
    &\geq |x_1 - x_2|^2 - k L_b |x_1 - x_2|^2 \geq 0.
  \end{align*}
  Since $1 - k L_b > 0$ we must have $x_1 = x_2$ and the proof is complete.
\end{proof}

For later use, we note that the solution operator for
\eqref{eq2:nonlinearF} is Lipschitz continuous.

\begin{lemma}
  \label{lem:stab}
  Let Assumptions~\ref{ass:f} and \ref{ass:b} be satisfied. 
  For $k \in (0,T]$ with $L_b k \in [0,1)$ let $S_k \colon \R^d \to 
  D(f)$ be the solution operator that maps $w \in \R^d$ to the unique 
  solution $x_0 \in  D(f)$ of \eqref{eq2:nonlinearF}. Then $S_k$ is 
  globally Lipschitz continuous with
  \begin{align*}
    | S_k(w_1) - S_k(w_2)| \le \frac{1}{1 - k L_b} |w_1 - w_2|
    \quad \text{for all } w_1, w_2 \in \R^d. 
  \end{align*}
\end{lemma}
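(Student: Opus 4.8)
The plan is to reuse the coercivity computation from the uniqueness part of the proof of Lemma~\ref{lem:solvableF}, now applied to two distinct right-hand sides. First I would fix $w_1, w_2 \in \R^d$ and denote by $x_i := S_k(w_i) \in D(f)$ the corresponding unique solutions of \eqref{eq2:nonlinearF}, with associated elements $\eta_{x_i} \in f(x_i)$ determined through Lemma~\ref{lem:solvableF}, so that $x_i + k \eta_{x_i} - k b(x_i) = w_i$ for $i \in \{1,2\}$.

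The central step is to subtract these two identities and form the inner product of the result with $x_1 - x_2$, which produces
\begin{align*}
  \inner{w_1 - w_2}{x_1 - x_2}
  &= |x_1 - x_2|^2 + k \inner{\eta_{x_1} - \eta_{x_2}}{x_1 - x_2} \\
  &\quad - k \inner{b(x_1) - b(x_2)}{x_1 - x_2}.
\end{align*}
Here the middle term is nonnegative by the monotonicity of $f$ (Definition~\ref{def:mono}), while the last term is controlled by $k \inner{b(x_1) - b(x_2)}{x_1 - x_2} \le k L_b |x_1 - x_2|^2$ owing to Assumption~\ref{ass:b} combined with the Cauchy--Schwarz inequality. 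Discarding the nonnegative subgradient term and inserting the Lipschitz bound then yields the lower bound $\inner{w_1 - w_2}{x_1 - x_2} \ge (1 - k L_b)|x_1 - x_2|^2$.

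To conclude, I would bound the left-hand side from above by $|w_1 - w_2|\,|x_1 - x_2|$ via Cauchy--Schwarz and divide by $(1 - k L_b)|x_1 - x_2|$, which is permissible because the step-size restriction $L_b k \in [0,1)$ forces $1 - k L_b > 0$ and the case $x_1 = x_2$ is trivial. This gives exactly the claimed estimate. I do not anticipate any genuine obstacle: the argument is essentially the uniqueness estimate already carried out in Lemma~\ref{lem:solvableF}, the only new feature being the nonzero right-hand side $w_1 - w_2$, so that the same step-size bound which previously forced equality of the solutions now precisely supplies the Lipschitz constant $\tfrac{1}{1 - k L_b}$.
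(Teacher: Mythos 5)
Your argument is correct and coincides with the paper's own proof: both subtract the two defining equations, test with $x_1 - x_2$, discard the nonnegative monotonicity term, bound the $b$-term by $k L_b |x_1 - x_2|^2$ via Lipschitz continuity, and finish with Cauchy--Schwarz and the step-size restriction $1 - k L_b > 0$. No gaps.
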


\begin{proof}
  Let $w_1, w_2 \in \R^d$ and $k \in (0,T]$ with $L_b k \in [0,1)$  be 
  given. 
  Let $x_i = S_k(w_i) \in D(f)$ and $\eta_{x_i} \in f(x_i)$, $i\in \{1,2\}$,
  denote the unique solutions of the equations
  \begin{align*}
    x_1 + k \eta_{x_1} -k b(x_1) = w_1,\quad 
    x_2 + k \eta_{x_2} -k b(x_2) = w_2.
  \end{align*}
  By considering the difference of these equations, tested with $x_1 - 
  x_2$, we obtain 
  \begin{align*}
    &|x_1 - x_2|^2 + k \inner{\eta_{x_1} - \eta_{x_2}}{x_1 - x_2} 
    - k \inner{b(x_1) - b(x_2 )}{x_1 - x_2} \\
    &\quad = \inner{w_1 - w_2}{x_1 - x_2}.
  \end{align*}
  By using the Cauchy--Schwarz inequality for the right-hand side as well as 
  the monotonicity and the Lipschitz continuity for the left-hand
  side, we get
  \begin{align*}
    (1 - k L_b) |x_1 - x_2|^2 
    \leq |w_1 - w_2| |x_1 - x_2|.
  \end{align*}
  Reinserting $x_i = S_k(w_i)$ then shows that
  \begin{align*}
     |S_k(w_1) - S_k(w_2)| = |x_1 - x_2|
    \leq \frac{1}{1 - k L_b} |w_1 - w_2|
  \end{align*}
  as claimed.
\end{proof}

\begin{theorem}
  \label{thm:existence_multi} 
  Let Assumptions~\ref{ass:f} to \ref{ass:X0} be satisfied.
  Then for every step size $k = \frac{T}{N}$, $N \in \N$, with $L_b k \in 
  [0,1)$ there exist uniquely determined families of  
  square-integrable, $\R^d$-valued and $(\F_{t_n})_{n \in 
  \{0,\ldots,N\}}$-adapted random variables $(X^n)_{n\in \{0,\dots,N\}}$ and 
  $(\eta^n)_{n\in \{0,\dots,N\}}$ such that  $X^n \in D(f)$, $\eta^n \in f(X^n)$ 
  for every $n \in \{0,\dots,N\}$ and
  \begin{align}
    \label{eq6:multiEuler}
    X^n + k \eta^n = X^{n-1} + k b(X^n) + g(X^{n-1})\Delta W^n 
  \end{align}
for every $n \in \{1,\dots,N\}$,  $\P$-almost surely. 
\end{theorem}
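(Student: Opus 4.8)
The plan is to prove existence and uniqueness by induction on $n$, reducing each step of the recursion \eqref{eq6:multiEuler} to an application of the scalar solvability result in Lemma~\ref{lem:solvableF}. First I would treat the base case $n=0$ trivially: set $X^0 = X_0$, which is $\F_0$-measurable, $D(f)$-valued by Assumption~\ref{ass:X0}, and square-integrable since $\E[|X_0|^{\max(2p-2,2)}] < \infty$ implies $\E[|X_0|^2] < \infty$. There is no associated $\eta^0$ forced by the recursion, but if one wishes to define $\eta^0 \in f(X^0)$ one may simply select any measurable element of $f(X^0)$; the recursion \eqref{eq6:multiEuler} only constrains $\eta^n$ for $n \in \{1,\ldots,N\}$.

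For the inductive step, I would assume that $X^{n-1}$ is a square-integrable, $\F_{t_{n-1}}$-measurable, $D(f)$-valued random variable. Setting $w := X^{n-1} + g(X^{n-1})\Delta W^n$, the equation \eqref{eq6:multiEuler} reads $X^n + k\eta^n - k b(X^n) = w$, which is exactly \eqref{eq2:nonlinearF} with this particular (random) right-hand side. Since $k = \frac{T}{N}$ satisfies $L_b k \in [0,1)$ by hypothesis, Lemma~\ref{lem:solvableF} applies pointwise in $\omega$ and yields, for $\P$-almost every $\omega$, a uniquely determined pair $x_0(\omega) \in D(f)$ and $\eta_{x_0}(\omega) \in f(x_0(\omega))$ solving the equation. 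I would then define $X^n(\omega) := x_0(\omega)$ and $\eta^n(\omega) := \eta_{x_0}(\omega)$. By construction $X^n \in D(f)$ and $\eta^n \in f(X^n)$, and the relation $\eta^n = \frac{1}{k}(w - X^n) + b(X^n)$ from the proof of Lemma~\ref{lem:solvableF} shows $\eta^n$ is determined by $X^n$ and $w$.

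The remaining points are measurability, adaptedness, and square-integrability. For measurability and adaptedness I would use the fact that $X^n = S_k(w)$, where $S_k$ is the solution operator from Lemma~\ref{lem:stab}, which is (globally Lipschitz, hence) continuous; since $w = X^{n-1} + g(X^{n-1})\Delta W^n$ is $\F_{t_n}$-measurable (as $X^{n-1}$ and $g(X^{n-1})$ are $\F_{t_{n-1}}$-measurable and $\Delta W^n$ is $\F_{t_n}$-measurable), the composition $X^n = S_k(w)$ is $\F_{t_n}$-measurable, and then $\eta^n = \frac{1}{k}(w - X^n) + b(X^n)$ is $\F_{t_n}$-measurable as well. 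For square-integrability, I would invoke the Lipschitz bound from Lemma~\ref{lem:stab}: writing $X^n = S_k(w)$ and comparing with a fixed reference point, say using $S_k(0) \in D(f)$, we get $|X^n| \le |S_k(0)| + \frac{1}{1-kL_b}|w|$, so $\|X^n\|_{L^2} \le C(1 + \|w\|_{L^2})$. Since $\|w\|_{L^2} \le \|X^{n-1}\|_{L^2} + \|g(X^{n-1})\Delta W^n\|_{L^2}$ and the latter is controlled by the linear growth bound \eqref{eq4:lingrowthbg} on $g$ together with the independence of $\Delta W^n$ from $\F_{t_{n-1}}$ (giving $\E[|g(X^{n-1})\Delta W^n|^2] = k\,\E[|g(X^{n-1})|^2] \le C k (1 + \E[|X^{n-1}|^2])$), the inductive hypothesis $X^{n-1} \in L^2$ yields $X^n \in L^2$. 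Square-integrability of $\eta^n$ then follows from its explicit formula. Uniqueness is inherited directly from the pointwise uniqueness in Lemma~\ref{lem:solvableF}.

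The main obstacle I anticipate is the measurable selection issue: Lemma~\ref{lem:solvableF} is stated for a deterministic $w \in \R^d$, so one must verify that the pointwise-in-$\omega$ solution $\omega \mapsto X^n(\omega)$ is genuinely a measurable random variable rather than merely a family of solutions. The clean way around this is precisely the observation that $X^n = S_k(w)$ is the image of the $\F_{t_n}$-measurable random variable $w$ under the \emph{deterministic continuous} map $S_k$ from Lemma~\ref{lem:stab}; continuity of $S_k$ upgrades the pointwise solvability to a bona fide measurable (indeed adapted) random variable automatically, sidestepping any delicate measurable-selection theorem. I expect the rest of the argument to be routine bookkeeping with the growth and Lipschitz bounds.
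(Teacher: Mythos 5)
Your proposal is correct and follows essentially the same route as the paper: induction over $n$, pointwise solvability from Lemma~\ref{lem:solvableF}, and measurability, adaptedness, and square-integrability via the Lipschitz solution operator $S_k$ of Lemma~\ref{lem:stab}, with $\eta^n$ recovered from its explicit formula. Your identification of the measurable-selection issue and its resolution through the continuity of $S_k$ is precisely the mechanism the paper uses.
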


\begin{proof}
  We prove the existence of $(X^n)_{n\in \{0,\dots,N\}}$ and $(\eta^n)_{n\in 
  \{0,\dots,N\}}$ by induction over $n \in \{0,\ldots,N\}$. From the 
  assumptions on $X_0$ and $f$ it is clear that $X^0=X_0$ and $\eta^0 \in
  f(X_0)$ are $\F_{t_0}$-adapted and square-integrable. In particular,
  it follows from Assumptions~\ref{ass:f} and \ref{ass:X0} that
  \begin{align*}
    \E \big[ |\eta^0 |^2 \big] \le  \beta^2 \E \big[ (1 + |X_0|^{p-1})^2
    \big] \le 2 \beta^2 \big( 1 + \E [ |X_0|^{2p-2} ] \big).
  \end{align*}
  
  Next, we assume that $(X^j)_{j \in \{0,\ldots,n-1\}}$ and $(\eta^j)_{j \in 
  \{0,\ldots,n-1\}}$ are $(\F_{t_{j}})_{j \in \{0,\ldots,n-1\}}$-adapted, 
  square-integrable and satisfy \eqref{eq6:multiEuler} for all $j \in
  \{1,\ldots,n-1\}$. By Lemma~\ref{lem:solvableF} there exist uniquely
  determined $X^n(\omega) \in D(f)$ and $\eta^n(\omega) \in f(X^n(\omega))$ for
  almost every $\omega \in \Omega$ such that
  \begin{align*}
    X^n(\omega) + k \eta^n(\omega) = X^{n-1}(\omega) + k b(X^n(\omega)) + 
    g(X^{n-1}(\omega))\Delta W^n(\omega).
  \end{align*}
  By Lemma~\ref{lem:stab}, the solution 
  operator $S_{k} \colon \R^d \to D(f)$ that maps $X^{n-1}(\omega) + 
  g(X^{n-1}(\omega))\Delta W^n(\omega)$ to $X^n(\omega) \in D(f)$ is Lipschitz continuous. 
  As $S_{k}$ is Lipschitz continuous and, hence, of linear growth it
  follows that  $X^n$ is an $\F_{t_n}$-measurable and
  square-integrable random 
  variable. To be more
  precise, we have the bound
  \begin{align*}
    \big\| X^n \big\|_{L^2(\Omega;\R^d)} 
    &=\big\| S_{k}(X^{n-1} + g(X^{n-1}) \Delta W^n) \|_{L^2(\Omega;\R^d)}\\
    &\le | S_{k}(0) | + \big\| X^{n-1} + g(X^{n-1}) \Delta W^n
    \big\|_{L^2(\Omega;\R^d)}. 
  \end{align*}  
  This implies, in particular, that
  \begin{align*}
     \eta^n = - \frac{1}{k}\big(X^n - X^{n-1}\big) + b(X^n) 
     + g(X^{n-1})\frac{\Delta W^n}{k} \quad \text{a.s. in } \Omega
  \end{align*}
  is also a $\F_{t_n}$-measurable and square-integrable random variable as
  $X^n$, $X^{n-1}$ and $g(X^{n-1})\Delta W^n$ have these properties. This
  finishes the proof of the induction and hence that of the theorem. 
\end{proof}

Next we state an \emph{a priori} estimate for the sequence of random 
variables satisfying recursion \eqref{eq1:multiEuler}. 

\begin{lemma}
  \label{lem:apriori}
  Let Assumptions~\ref{ass:f} to \ref{ass:X0} be satisfied.
  For a step size $k = \frac{T}{N}$, $N \in \N$, 
  with $5 L_b k \in [0,1)$, let $(X^n)_{n \in \{0,\dots,N\}}$ and
  $(\eta^n)_{n\in \{0,\dots,N\}}$ be two families of $(\F_{t_n})_{n \in 
  \{0,\ldots,N\}}$-adapted random variables as stated in 
  Theorem~\ref{thm:existence_multi}.
  Then there exists $K_X \in (0,\infty)$ independent of the step size
  $k = \frac{T}{N}$ such that
  \begin{align}
    \label{eq4:aprioriX}
    \begin{split}
      &\max_{n \in \{1,\ldots,N\}} \E\big[ |X^n|^2 \big] 
      + \frac{1}{2} \sum_{ j = 1}^N \E \big[ | X^{j} - X^{j-1} |^2 \big]
      + 2 \mu k \sum_{ j = 1}^N \E \big[|X^j|^p\big] \leq K_X.
    \end{split}
  \end{align}
  In addition, if $p \in (1,\infty)$, then there exists $K_\eta \in (0,\infty)$ 
  independent of the step size $k = \frac{T}{N}$ such that
  \begin{align}
    \label{eq4:aprioriEta}
    k \sum_{ j = 1}^N \E \big[|\eta^j|^q\big] 
    \leq K_{\eta}, 
  \end{align}
  where $q \in (1,\infty)$ is given by $\frac{1}{p} + \frac{1}{q} = 1$. 
\end{lemma}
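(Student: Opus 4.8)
The plan is to mirror the proof of Lemma~\ref{lem:apriori2}, testing the recursion \eqref{eq6:multiEuler} against $X^n$. First I would rewrite \eqref{eq6:multiEuler} as $X^n - X^{n-1} = -k\eta^n + k b(X^n) + g(X^{n-1})\Delta W^n$ and combine it with the polarization identity to obtain, for each $n$,
\begin{align*}
  |X^n|^2 - |X^{n-1}|^2 + |X^n - X^{n-1}|^2
  = -2k\inner{\eta^n}{X^n} + 2k\inner{b(X^n)}{X^n} + 2\inner{g(X^{n-1})\Delta W^n}{X^n}.
\end{align*}
The three terms on the right are then estimated separately. Since $\eta^n \in f(X^n)$, the coercivity bound in Assumption~\ref{ass:f} gives $-2k\inner{\eta^n}{X^n} \le -2k\mu|X^n|^p + 2k\lambda$, which is the source of the $|X^n|^p$ contribution on the left-hand side of \eqref{eq4:aprioriX}. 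The drift term is controlled by the linear growth \eqref{eq4:lingrowthbg} and a weighted Young inequality, yielding $2k\inner{b(X^n)}{X^n} \le kL_b + 3kL_b|X^n|^2$.

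For the stochastic term I would use the splitting
\begin{align*}
  2\inner{g(X^{n-1})\Delta W^n}{X^n}
  = 2\inner{g(X^{n-1})\Delta W^n}{X^n - X^{n-1}} + 2\inner{g(X^{n-1})\Delta W^n}{X^{n-1}},
\end{align*}
bounding the first summand by $2|g(X^{n-1})\Delta W^n|^2 + \tfrac{1}{2}|X^n - X^{n-1}|^2$, where the second half is absorbed into the term $|X^n - X^{n-1}|^2$ on the left. The second summand has vanishing expectation, because $g(X^{n-1})^{\mathsf T} X^{n-1}$ is $\F_{t_{n-1}}$-measurable while $\Delta W^n$ is centred and independent of $\F_{t_{n-1}}$; here the square-integrability established in Theorem~\ref{thm:existence_multi} is exactly what legitimizes passing to the expectation.

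Next I would sum over $j = 1,\dots,n$ and take expectations, using $\E[|g(X^{j-1})\Delta W^j|^2] = k\,\E[|g(X^{j-1})|^2]$ together with the linear growth of $g$. The martingale sum drops out and one arrives at an inequality of the form
\begin{align*}
  \E[|X^n|^2] + \tfrac{1}{2}\sum_{j=1}^n\E[|X^j - X^{j-1}|^2] + 2\mu k\sum_{j=1}^n\E[|X^j|^p]
  \le C + 3L_b k\,\E[|X^n|^2] + Ck\sum_{j=0}^{n-1}\E[|X^j|^2],
\end{align*}
where $C$ collects the $\lambda$-, $L_b$-, $L_g$-contributions (bounded since $nk \le T$) and $\E[|X_0|^2]$, which is finite by Assumption~\ref{ass:X0}. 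The step I expect to be the main obstacle is precisely the treatment of the quadratic terms: when $p < 2$, the coercive sum $k\sum\E[|X^j|^p]$ does \emph{not} dominate $k\sum\E[|X^j|^2]$, so the quadratic contributions produced by $b$ and $g$ cannot be absorbed by dissipation and must be handled by hand. The single implicit term $3L_b k\,\E[|X^n|^2]$ is moved to the left, which is feasible because the assumption $5L_b k < 1$ keeps the prefactor $1 - 3L_b k > \tfrac{2}{5}$ bounded away from zero; the remaining sum $Ck\sum_{j=0}^{n-1}\E[|X^j|^2]$ is exactly of the form to which a standard discrete Gronwall inequality applies, giving $\max_n \E[|X^n|^2] \le K_X$. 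Re-inserting this uniform bound into the summed inequality for $n = N$ then controls the two remaining sums, completing \eqref{eq4:aprioriX}.

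Finally, for \eqref{eq4:aprioriEta} I would avoid expressing $\eta^n$ through the recursion, since that would introduce negative powers of $k$, and instead exploit the growth bound in Assumption~\ref{ass:f} directly: as $\eta^n \in f(X^n)$, we have $|\eta^n|^q \le \beta^q(1 + |X^n|^{p-1})^q \le C(1 + |X^n|^p)$, using the crucial exponent identity $(p-1)q = p$, which follows from $\tfrac{1}{p} + \tfrac{1}{q} = 1$. Summing over $j$ and invoking the bound on $k\sum_{j=1}^N\E[|X^j|^p]$ already contained in \eqref{eq4:aprioriX} yields $k\sum_{j=1}^N\E[|\eta^j|^q] \le K_\eta$.
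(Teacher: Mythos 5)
Your proposal is correct and follows essentially the same route as the paper: testing the scheme with $X^n$, using coercivity of $f$, splitting the noise term to isolate a martingale increment, absorbing $|X^n-X^{n-1}|^2$ by weighted Young, a kick-back of the implicit quadratic term justified by $5L_bk<1$, discrete Gronwall, and the exponent identity $(p-1)q=p$ for the $\eta$-bound. The only differences are cosmetic constants (your kick-back factor is $1-3L_bk>\tfrac25$ versus the paper's $1-\tfrac52 L_bk>\tfrac12$), and your remark about the $p<2$ case not being absorbable by dissipation matches exactly why the paper also resorts to Gronwall here.
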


\begin{remark} \label{rem:p1}
  If $p = 1$ in Assumption~\ref{ass:f}, then $f$ and, hence, $(\eta^n)_{n \in
  \{1,\ldots,N\}}$ are bounded. In particular, \eqref{eq4:aprioriEta} 
  holds for any $q \in (1,\infty)$ and for any step size $k = \frac{T}{N}$ with
  $L_b k \in [0,1)$. 
\end{remark}

\begin{proof}[Proof of Lemma~\ref{lem:apriori}]
  First, we recall the identity
  \begin{align*}
    \inner{X^n - X^{n-1}}{X^n}
    = \frac{1}{2} \big( |X^n|^2 - |X^{n-1}|^2 + |X^n - X^{n-1}|^2 \big).
  \end{align*}
  As $\eta^n \in f(X^n)$, using Assumptions~\ref{ass:f} and 
  \ref{ass:b}, it follows that
  \begin{align*}
    &\frac{1}{2} \big( |X^n|^2 - |X^{n-1}|^2 + |X^n - X^{n-1}|^2 \big)
    + k \mu |X^n|^p
    \\
    &\quad \leq \langle  X^n - X^{n-1}, X^n \rangle +  k \inner{\eta^n}{X^n} 
     + k \lambda\\
     &\quad = k \inner{b(X^n)}{X^n} + \langle g(X^{n-1}) \Delta W^n, X^n
     \rangle + k \lambda\\
     &\quad \le k L_b (1 + |X^n|)|X^n|  + \langle g(X^{n-1}) \Delta W^n, X^n
     \rangle + k \lambda,
  \end{align*}
  where we also applied \eqref{eq4:lingrowthbg}. 
  Hence,  
  \begin{align*}
    &\frac{1}{2} \big( |X^n|^2 - |X^{n-1}|^2 + |X^n - X^{n-1}|^2 \big) + k \mu
    |X^n|^p \\
    &\quad \leq k (\lambda + L_b) + \frac{5}{4} k L_b |X^n|^2 + \langle
    g(X^{n-1}) \Delta W^n, X^n - X^{n-1} \rangle\\
    &\qquad + \langle g(X^{n-1}) \Delta W^n, X^{n-1} \rangle \\
    &\quad \leq  k (\lambda + L_b) + \frac{5}{4} k L_b |X^n|^2 +
    \big| g(X^{n-1}) \Delta W^n \big|^2
    + \frac{1}{4} | X^n - X^{n-1}|^2\\
    &\qquad +  \langle g(X^{n-1}) \Delta W^n, X^{n-1} \rangle,
  \end{align*}
  for every $n \in \{1,\ldots,N\}$, where we also applied the Cauchy--Schwarz
  and weighted Young inequalities.
  After a kick-back,
  we sum from $1$ to  $n \in \{1,\ldots,N\}$ to obtain
  \begin{align*}
    &|X^n|^2 + \frac{1}{2} \sum_{ j = 1}^n | X^{j} - X^{j-1} |^2
    + 2 k \mu \sum_{j = 1}^n |X^j|^p \\
    &\quad \leq |X^0|^2 + 2 (\lambda + L_b) T 
    + \frac{5}{2} k L_b \sum_{j = 1}^n |X^j|^2
    + 2 \sum_{j = 1}^n \big| g(X^{j-1}) \Delta W^j \big|^2\\
    &\qquad + 2 \sum_{j = 1}^n \big\langle g(X^{j-1}) \Delta W^j, X^{j-1} 
    \big\rangle.
  \end{align*}
  After taking expectations, the last term on the right-hand side vanishes. 
  Then, applications of It\=o's isometry and \eqref{eq4:lingrowthbg} give
  \begin{align*}
    &\E\big[ |X^n|^2 \big] + \frac{1}{2} \sum_{ j = 1}^n \E \big[ | X^{j} -
    X^{j-1} |^2 \big]
    + 2 k \mu \sum_{j = 1}^n \E \big[|X^j|^p\big] \\
    &\quad \le \E \big[ |X^0|^2 \big] + 2 (\lambda + L_b) T
    + \frac{5}{2} k L_b \sum_{j = 1}^n \E\big[ |X^j|^2\big]
    + 2 \sum_{j = 1}^n \E \big[ \big| g(X^{j-1}) \Delta W^j \big|^2 \big] \\
    &\quad \le \E \big[ |X^0|^2 \big] + 2 (\lambda + L_b) T 
    + \frac{5}{2} k L_b \sum_{j = 1}^n \E\big[ |X^j|^2\big]
    + 2 k \sum_{j = 1}^n \E\big[ | g(X^{j-1})|^2 \big]\\
    &\quad \le (1 + 4 k L_g^2 ) \E \big[ |X^0|^2 \big] 
    + 2 (\lambda + L_b + 2 L_g^2) T + k \Big( \frac{5}{2} L_b + 4 L_g^2\Big)
    \sum_{j = 1}^{n-1} \E\big[ |X^j|^2\big]\\
    &\qquad+ \frac{5}{2} k L_b \E\big[ |X^n|^2\big].
  \end{align*}
  Since the step-size bound $5 L_b k \in [0,1)$ ensures that 
  \begin{align*}
    1 - \frac{5}{2} k L_b > \frac{1}{2},
  \end{align*}
  the discrete Gronwall inequality (see, for example, \cite{clark1987}) is 
  applicable and completes 
  the proof of \eqref{eq4:aprioriX}. 
  Finally, it follows from the polynomial growth bound on
  $f$ that
  \begin{align*}
    \Big(k \sum_{j = 1}^N \E \big[|\eta^j|^q\big]\Big)^{\frac{1}{q}}
    \leq \Big(k \sum_{j = 1}^N \E \big[\beta^q (1 + 
    |\eta^j|^{p-1})^q\big]\Big)^{\frac{1}{q}}
    \leq \beta T^{\frac{1}{q}}
    + \beta \Big(k \sum_{j = 1}^N \E \big[ |X^j|^p \big] \Big)^{\frac{1}{q}},
  \end{align*}
  and an application of \eqref{eq4:aprioriX} then yields \eqref{eq4:aprioriEta}.
\end{proof}

%
%

\section{Error estimates in the general case}
\label{sec:aposteriori}

In this section, we derive an error estimate for the backward Euler 
method given by \eqref{eq1:multiEuler} for the MSDE
\eqref{eq1:SDE}. 

To prove the convergence of the scheme \eqref{eq1:multiEuler}, let
us fix some notation. Throughout this section, we assume that the equidistant 
step size $k = \frac{T}{N}$ is small enough so that the \emph{a priori}
estimates in Lemma~\ref{lem:apriori} hold. Further, as in \eqref{eq2:Xlin} and
\eqref{eq2:Xconst}, we denote the piecewise linear interpolants of the discrete
values by $\X(0) = X^0$, $\Eta(0) = \eta^0$ for $\eta^0 \in f(X^0)$ and   
\begin{align*}
  \X(t) := \frac{t-t_{n-1}}{k} X^n + \frac{t_n-t}{k} X^{n-1},
  & &\Eta(t) := \frac{t-t_{n-1}}{k} \eta^n + \frac{t_n-t}{k} \eta^{n-1}
\end{align*}
for all $t \in (t_{n-1},t_n]$ and $n \in \{1,\ldots,N\}$.
Similarly, we define the piecewise constant interpolant by
$\overline{\X}(0)=\underline{\X}(0)=X^0$ and
\begin{align*}
  &\overline{\X}(t) = X^n
  \quad \text{and} \quad
  \underline{\X}(t) = X^{n-1}, \text{ as well as}
  &&\overline{\Eta}(t) = \eta^n
  \quad \text{and} \quad
  \underline{\Eta}(t) = \eta^{n-1},
\end{align*}
for all $t \in (t_{n-1},t_n]$ and $n \in \{1,\ldots,N\}$.
Moreover, we introduce the stochastic processes $G \colon [0,T] \times
\Omega \to \R^d$ and $\G \colon [0,T] \times \Omega \to \R^d$ defined by
\begin{align}
  \label{eq4:G}
  G(t) = \int_{0}^{t} g(X(s)) \diff{W(s)}, \quad \text{ for all } t \in [0,T],
\end{align}
as well as by $\G(0) = 0$ and, for all $n \in \{1,\ldots,N\}$ and $t \in
(t_{n-1},t_n]$, 
\begin{align}
  \label{eq4:defG}
  \begin{split}
    \G(t) &= \frac{t - t_{n-1}}{k}g(X^{n-1}) \Delta W^n
    + \sum_{i=1}^{n-1} g(X^{i-1}) \Delta W^i\\
    &= \frac{t - t_{n-1}}{k} g(X^{n-1}) \Delta W^n
    + \int_{0}^{t_{n-1}} g(\underline{\X}(s)) \diff{W(s)}.
  \end{split}
\end{align}
In view of \eqref{eq1:multiEuler} and the definition of $\G$ for $t \in 
(t_{n-1},t_n]$, $n \in \{1,\dots,N\}$, we obtain the representation 
\begin{align}
  \label{eq6:Xlin}
  \begin{split}
    \X(t)
    &= X^{n-1} + \frac{t - t_{n-1}}{k} \big(X^n - X^{n-1}\big)\\
    &= \Big( X^0 + k \sum_{i=1}^{n-1} \big(b(X^i) - \eta^i\big) 
    + \sum_{i=1}^{n-1} g(X^{i-1}) \Delta W^i \Big) \\
    &\quad + \frac{t - t_{n-1}}{k} \Big(k b(X^n) - k \eta^n + g(X^{n-1}) \Delta
    W^n\Big)\\
    &=  X_0 + \int_{0}^{t} \big(b(\overline{\X}(s)) - \overline{\Eta}(s)\big) 
    \diff{s} 
    + \G(t).
  \end{split}
\end{align}

We begin the derivation of our error estimate by
considering the difference between the stochastic integral $G$ and its
approximation $\G$.

\begin{lemma}
  \label{lem:Gerror}
  Let Assumptions~\ref{ass:f} to \ref{ass:X0} be satisfied.
  Then there exists $K_G \in (0,\infty)$ such that, for every
  equidistant step size $k = \frac{T}{N}$, $N \in \N$ with 
  $5 L_b k \in [0,1)$ and every $t \in [0,T]$, we have 
  \begin{align}
    \label{eq6:term3}
    \begin{split}
      \big\| G(t) - \G(t) \big\|_{L^2(\Omega;\R^d)}^2 \le
      K_G k + 2L_g^2 \int_0^t \E \big[ | X(s) - \X(s) |^2 \big] \diff{s}.
    \end{split}
  \end{align}
  In addition, for every $\rho \in [2,\infty)$, there exists $K_\rho \in
  (0,\infty)$ such that, for every $n \in \{1,\ldots,N\}$ and $t \in
  (t_{n-1},t_n]$, the following estimates hold:
  \begin{align}
    \label{eq6:Greg1}
    \Big( \int_{t_{n-1}}^{t} \E \big[ | G(t) - G(s) |^\rho \big]
    \diff{s} \Big)^{\frac{1}{\rho}}
    &\le K_\rho k^{\frac{1}{2}} \Big( \int_{t_{n-1}}^{t} 
    \big(1 + \E \big[ | X(s) |^\rho \big] \big)
    \diff{s} \Big)^{\frac{1}{\rho}}
  \end{align}
  and
  \begin{align}
    \label{eq6:Greg2}
    \sup_{s \in [t_{n-1},t]} \| \G(t) - \G(s) \|_{L^\rho(\Omega;\R^d)}^{\rho} 
    &\le K_\rho k^{\frac{\rho}{2}} \big( 1 +
    \|X^{n-1}\|_{L^\rho(\Omega;\R^d)}^{\rho}\big).
    \end{align}
\end{lemma}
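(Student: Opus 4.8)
The plan is to prove the three assertions separately, in each case isolating the stochastic integrals over the current subinterval and reducing everything to It\=o's isometry, the Burkholder--Davis--Gundy inequality of Lemma~\ref{lem:hoelder}, the linear growth \eqref{eq4:lingrowthbg} of $g$, and the moment bounds of Theorem~\ref{thm:exact} and the discrete a priori estimate \eqref{eq4:aprioriX}. Throughout I would fix $n \in \{1,\dots,N\}$ and $t \in (t_{n-1},t_n]$.

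For \eqref{eq6:term3}, I would split $G(t) = \int_0^{t_{n-1}} g(X)\diff{W} + \int_{t_{n-1}}^t g(X)\diff{W}$ and insert the definition \eqref{eq4:defG} of $\G$ to obtain
\[
  G(t) - \G(t) = \underbrace{\int_0^{t_{n-1}}\big(g(X(s)) - g(\underline{\X}(s))\big)\diff{W(s)}}_{=:I_1} + \underbrace{\int_{t_{n-1}}^t g(X(s))\diff{W(s)}}_{=:I_2} - \underbrace{\tfrac{t-t_{n-1}}{k}g(X^{n-1})\Delta W^n}_{=:I_3}.
\]
The structural point is that $I_1$ is $\F_{t_{n-1}}$-measurable, while both $I_2$ and $\Delta W^n$ have vanishing conditional expectation given $\F_{t_{n-1}}$; hence $\E[\inner{I_1}{I_2 - I_3}] = 0$ and $\E[|G(t)-\G(t)|^2] = \E[|I_1|^2] + \E[|I_2 - I_3|^2]$. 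For $I_1$, It\=o's isometry and the Lipschitz property of $g$ give $\E[|I_1|^2] \le L_g^2 \int_0^{t_{n-1}}\E[|X(s)-\underline{\X}(s)|^2]\diff{s}$, and the elementary bound $|X-\underline{\X}|^2 \le 2|X-\X|^2 + 2|\X-\underline{\X}|^2$ produces precisely the target coefficient $2L_g^2$ on $\int_0^t \E[|X-\X|^2]\diff{s}$; the leftover $2L_g^2\int_0^{t_{n-1}}\E[|\X-\underline{\X}|^2]\diff{s} = \tfrac{2}{3}L_g^2 k\sum_{i}\E[|X^i-X^{i-1}|^2]$ is of order $k$ by \eqref{eq4:aprioriX}. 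Finally $\E[|I_2-I_3|^2] \le 2\E[|I_2|^2] + 2\E[|I_3|^2]$, where It\=o's isometry, the linear growth of $g$, the moment bounds, and $\tfrac{t-t_{n-1}}{k}\le 1$ show both summands are $O(k)$, to be absorbed into $K_G k$.

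For \eqref{eq6:Greg1}, since $s,t$ lie in the same subinterval we have $G(t)-G(s) = \int_s^t g(X(\tau))\diff{W(\tau)}$ with $t-s \le k$, so Lemma~\ref{lem:hoelder} with exponent $\rho$ and the linear growth of $g$ yield $\E[|G(t)-G(s)|^\rho] \le C k^{(\rho-2)/2}\int_{t_{n-1}}^t(1+\E[|X(\tau)|^\rho])\diff{\tau}$. Integrating over $s\in(t_{n-1},t]$ contributes a further factor $(t-t_{n-1})\le k$, so $\int_{t_{n-1}}^t\E[|G(t)-G(s)|^\rho]\diff{s} \le Ck^{\rho/2}\int_{t_{n-1}}^t(1+\E[|X(\tau)|^\rho])\diff{\tau}$, and taking the $\rho$-th root gives the factor $k^{1/2}$ as claimed. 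For \eqref{eq6:Greg2}, I would read off from \eqref{eq4:defG} that for $t_{n-1}\le s\le t\le t_n$ the integral part cancels and $\G(t)-\G(s) = \tfrac{t-s}{k}g(X^{n-1})\Delta W^n$, so $|\G(t)-\G(s)|\le |g(X^{n-1})\Delta W^n|$; conditioning on $\F_{t_{n-1}}$ and using that $\Delta W^n$ is an independent centered Gaussian with covariance $k\,I_m$ gives $\E[|g(X^{n-1})\Delta W^n|^\rho]\le C_\rho k^{\rho/2}\E[|g(X^{n-1})|^\rho]$, whence \eqref{eq6:Greg2} follows from the linear growth of $g$ and Lemma~\ref{lem:apriori}.

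The main obstacle I anticipate is the first estimate \eqref{eq6:term3}: the approximating term $I_3$ is not $\F_t$-adapted, because $\Delta W^n$ reaches up to $t_n > t$, so $G-\G$ cannot be treated as a single stochastic integral. The care needed is twofold: first, to verify that the cross terms involving the genuinely $\F_{t_{n-1}}$-measurable martingale part $I_1$ vanish, so that no spurious contribution of order $k^{1/2}$ survives into $\E[|G(t)-\G(t)|^2]$; and second, to convert the piecewise constant $\underline{\X}$ appearing in $I_1$ into the piecewise linear $\X$ required by the statement \emph{while retaining the exact constant} $2L_g^2$, since it is this precise constant that allows the subsequent Gronwall-type step in the main convergence theorem to close.
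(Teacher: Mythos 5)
Your proposal is correct and follows essentially the same route as the paper: the same splitting of $G-\G$ into the Lipschitz-comparison integral plus the interpolation remainder on the current subinterval, It\=o's isometry and the a priori bound \eqref{eq4:aprioriX} for \eqref{eq6:term3}, and Lemma~\ref{lem:hoelder} with the linear growth of $g$ for \eqref{eq6:Greg1} and \eqref{eq6:Greg2}. The only cosmetic difference is that you exploit the orthogonality $\E[\inner{I_1}{I_2-I_3}]=0$ to kill the cross term, whereas the paper simply applies the triangle inequality in $L^2(\Omega;\R^d)$ and squares afterwards; both yield the coefficient $2L_g^2$ needed for the Gronwall step.
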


\begin{proof}
  Recall the definitions of $G$ and $\G$ from \eqref{eq4:G} and
  \eqref{eq4:defG}. First, we add and subtract a term and then apply the triangle
  inequality. Then, for every $n \in \{1,\ldots,N\}$ and $t \in
  (t_{n-1},t_n]$ we arrive at 
  \begin{align*}
    &\big\| G(t) - \G(t) \big\|_{L^2(\Omega;\R^d)}
    \le \Big\| \int_0^t \big( g(X(s)) - g(\underline{\X}(s)) \big)
    \diff{W(s)} \Big\|_{L^2(\Omega;\R^d)}\\
    &\qquad + \Big\| \int_{t_{n-1}}^t  g(\underline{\X}(s)) \diff{W(s)}
    - \frac{t - t_{n-1} }{k} g(X^{n-1}) \Delta W^n \Big\|_{L^2(\Omega;\R^d)}
    \\ 
    &\quad = \Big( \int_0^t \E \big[ | g(X(s)) - g(\underline{\X}(s)) |^2 \big]
    \diff{s} \Big)^{\frac{1}{2}}\\
    &\qquad + \Big\| g(X^{n-1}) \Big( \frac{t_n - t}{k} \big( W(t) - W(t_{n-1})
    \big) - \frac{t - t_{n-1} }{k} \big(W(t_n) - W(t) \big)\Big)
    \Big\|_{L^2(\Omega;\R^d)}
  \end{align*}
  by an application of It\=o's isometry. Furthermore, due to the Lipschitz
  continuity of $g$ we obtain
  \begin{align*}
    & \Big( \int_0^t \E \big[ | g(X(s)) - g(\underline{\X}(s)) |^2 \big] \diff{s}
    \Big)^{\frac{1}{2}}\\
    &\quad \le L_g \Big( \int_0^t \E \big[ | X(s) - \X(s) |^2 \big] \diff{s}
    \Big)^{\frac{1}{2}}
    + L_g \Big( \int_0^t \E \big[ | \X(s) - \underline{\X}(s) |^2 \big]
    \diff{s} \Big)^{\frac{1}{2}}\\ 
    &\quad \le L_g \Big( \int_0^t \E \big[ |X(s) - \X(s)|^2 \big]
    \diff{s} \Big)^{\frac{1}{2}}
    + L_g \Big( \frac{1}{3} k \sum_{i = 1}^n \E\big[ |X^i - X^{i-1}|^2 \big]
    \Big)^{\frac{1}{2}},
  \end{align*}
  where the last step follows from the identity
  \begin{align*}
    \X(s) - \underline{\X}(s)
    = \frac{s-t_{i-1}}{k} X^i + \frac{t_i-s}{k} X^{i-1} - X^{i-1}
    = \frac{s-t_{i-1}}{k} \big(X^i - X^{i-1}\big)
  \end{align*}
  which holds for every $s \in (t_{i-1},t_i]$, $i \in \{1,\ldots,N\}$.
  Finally, it follows from the same arguments as in the proof of
  Lemma~\ref{lem:Werror} and by \eqref{eq4:lingrowthbg} for every 
  $t \in (t_{n-1},t_n]$ that
  \begin{align*}
    &\Big\| g(X^{n-1}) \Big( \frac{t_n - t}{k} \big( W(t) - W(t_{n-1})
    \big) - \frac{t - t_{n-1} }{k} \big(W(t_n) - W(t) \big)\Big)
    \Big\|_{L^2(\Omega;\R^d)}^2\\
    &\quad = \frac{1}{k^2} \big(
    (t_n - t)^2 (t - t_{n-1}) + (t - t_{n-1})^2 (t_n - t)\big)
    \big\| g(X^{n-1}) \big\|_{L^2(\Omega;\R^d)}^2\\
    &\quad = \frac{1}{k} (t_n - t) (t - t_{n-1}) 
    \big\| g(X^{n-1}) \big\|_{L^2(\Omega;\R^d)}^2\\
    &\quad \le \frac{1}{4} L_g^2 k 
    \big(1+ \| X^{n-1} \|_{L^2(\Omega;\R^d)} \big)^2. 
  \end{align*}
  Together with the \emph{a priori} bounds from Lemma~\ref{lem:apriori}
  this shows \eqref{eq6:term3}.

  It remains to prove the estimates \eqref{eq6:Greg1} and \eqref{eq6:Greg2}.
  For \eqref{eq6:Greg1} we first apply the Burkholder--Davis--Gundy-type
  inequality from Lemma~\ref{lem:hoelder} with constant $C_\rho$ 
  and obtain for every 
  $n \in \{1,\ldots,N\}$ and $t \in (t_{n-1},t_n]$ that
  \begin{align*}
    \int_{t_{n-1}}^{t} \E \big[ | G(t) - G(s) |^{\rho} \big] \diff{s}
    &\le C_\rho^{\rho} \int_{t_{n-1}}^{t} (t - s)^{\frac{\rho - 2}{2}}
    \int_s^{t} \E \big[ |g(X(\tau))|^\rho \big] \diff{\tau} \diff{s}\\
    &\le \frac{2^{\rho} }{\rho} C_{\rho}^{\rho} L_g^\rho k^{\frac{\rho}{2}}
    \int_{t_{n-1}}^{t} \big( 1 + \E \big[ | X(\tau)|^\rho \big] \big) \diff{\tau},     
  \end{align*}
  where we also made use of the linear growth bound 
  \eqref{eq4:lingrowthbg} 
  in the last step. This proves \eqref{eq6:Greg1}.
  The bound in \eqref{eq6:Greg2} can be shown by analogous
  arguments.
\end{proof}

The next lemma generalizes an important estimate from the proof of
Theorem~\ref{thm:conv_Hoelder} to the multi-valued setting. In particular, we
refer to Lemma~\ref{lem:err_rep2} and \eqref{eq5:H1}.

\begin{lemma}
  \label{lem:coerc}
  Let Assumptions~\ref{ass:f} to \ref{ass:X0} be satisfied.
  For every step size $k = \frac{T}{N}$, $N \in \N$, with
  $5 L_b k \in [0,1)$, let the families $(X^n)_{n\in \{0,\dots,N\}}$ and
  $(\eta^n)_{n\in \{0,\dots,N\}}$ of random variables be as stated in 
  Theorem~\ref{thm:existence_multi}.
  Then there exists $K_{\delta \eta} \in (0,\infty)$ independent of the step
  size $k$ such that
  \begin{align*}  
    0\le&k \sum_{i=1}^{N} \E \big[ \inner{\eta^i - \eta^{i-1}}{X^i - 
    X^{i-1}}\big] \le K_{\delta\eta} k^{\frac{1}{2}}.  
  \end{align*}
\end{lemma}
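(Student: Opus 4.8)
The plan is to prove the two inequalities separately. The lower bound is immediate: since $\eta^i \in f(X^i)$, $\eta^{i-1} \in f(X^{i-1})$ and $f$ is monotone (Assumption~\ref{ass:f} and Definition~\ref{def:mono}), each summand satisfies $\inner{\eta^i - \eta^{i-1}}{X^i - X^{i-1}} \ge 0$, so the whole weighted sum is nonnegative. For the upper bound I would substitute the recursion \eqref{eq6:multiEuler}, written as $X^i - X^{i-1} = g(X^{i-1}) \Delta W^i + k\big(b(X^i) - \eta^i\big)$, into the inner product. This decomposes the quantity of interest as $R_1 + R_2 + R_3$ with
\[
\begin{aligned}
  R_1 &= k\sum_{i=1}^N \E\big[\inner{\eta^i - \eta^{i-1}}{g(X^{i-1})\Delta W^i}\big],\\
  R_2 &= k^2\sum_{i=1}^N \E\big[\inner{\eta^i - \eta^{i-1}}{b(X^i)}\big],\\
  R_3 &= -k^2\sum_{i=1}^N \E\big[\inner{\eta^i - \eta^{i-1}}{\eta^i}\big].
\end{aligned}
\]
I expect $R_1$ to carry the claimed rate $k^{1/2}$, while $R_2$ and $R_3$ turn out to be of higher order in $k$.

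To bound $R_1$ I would apply H\"older's inequality in $\omega$ with the conjugate exponents $q$ and $p$ from the statement to each summand, $\E[|\inner{\eta^i - \eta^{i-1}}{g(X^{i-1})\Delta W^i}|] \le \|\eta^i - \eta^{i-1}\|_{L^q}\,\|g(X^{i-1})\Delta W^i\|_{L^p}$; note that this step needs no independence, so the fact that $\eta^i$ depends on $\Delta W^i$ is harmless. The decisive factor $k^{1/2}$ then appears because $g(X^{i-1})$ is $\F_{t_{i-1}}$-measurable while $\Delta W^i$ is independent of $\F_{t_{i-1}}$, so the expectation factorises, $\|g(X^{i-1})\Delta W^i\|_{L^p} = \|g(X^{i-1})\|_{L^p}\|\Delta W^i\|_{L^p}$, and the Gaussian moment bound gives $\|\Delta W^i\|_{L^p} \le C k^{1/2}$. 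A discrete H\"older inequality over $i$, again with exponents $q$ and $p$, then reduces $R_1$ to a product of $\big(k\sum_i \E[|\eta^i - \eta^{i-1}|^q]\big)^{1/q}$, bounded via \eqref{eq4:aprioriEta}, and $\big(k\sum_i \E[|g(X^{i-1})|^p]\big)^{1/p}$, bounded by the linear growth \eqref{eq4:lingrowthbg} together with the $L^p$-estimate in \eqref{eq4:aprioriX} (using $X^0 = X_0$ and Assumption~\ref{ass:X0} to absorb the boundary term). This yields $R_1 \le C k^{1/2}$.

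The term $R_2$ is estimated by exactly the same H\"older argument, the only difference being the extra prefactor $k$: since $k\sum_i \|\eta^i - \eta^{i-1}\|_{L^q}\|b(X^i)\|_{L^p}$ is bounded via \eqref{eq4:aprioriEta}, \eqref{eq4:lingrowthbg} and \eqref{eq4:aprioriX}, one obtains $R_2 \le C k$. The main obstacle is the self-interaction term $R_3$: a direct H\"older estimate would pair $|\eta^i - \eta^{i-1}| \in L^q$ with $|\eta^i| \in L^p$, but for $p > 2$ we only control $\eta$ in $L^q$ with $q < 2$, so the required $L^p$-norm of $\eta^i$ is unavailable. I would circumvent this by summation by parts, using $\inner{\eta^i - \eta^{i-1}}{\eta^i} = \frac{1}{2}(|\eta^i|^2 - |\eta^{i-1}|^2) + \frac{1}{2}|\eta^i - \eta^{i-1}|^2$ and telescoping the first difference, which gives
\[
  R_3 = -\frac{k^2}{2}\E\big[|\eta^N|^2\big] - \frac{k^2}{2}\sum_{i=1}^N \E\big[|\eta^i - \eta^{i-1}|^2\big] + \frac{k^2}{2}\E\big[|\eta^0|^2\big] \le \frac{k^2}{2}\E\big[|\eta^0|^2\big].
\]
The surviving term is finite and of order $k^2$, since $\E[|\eta^0|^2] \le 2\beta^2(1 + \E[|X_0|^{2p-2}]) < \infty$ by the growth bound in Assumption~\ref{ass:f} and by Assumption~\ref{ass:X0}.

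Adding the three contributions gives $R_1 + R_2 + R_3 \le C(k^{1/2} + k + k^2) \le K_{\delta\eta}\,k^{1/2}$ for $k \le T$, which is the desired upper bound. Finally, the degenerate case $p = 1$ (where $q = \infty$) is covered by the same scheme, using the boundedness of $\eta$ from Remark~\ref{rem:p1} in place of the $L^q$-estimate \eqref{eq4:aprioriEta} throughout.
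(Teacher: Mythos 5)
Your proposal is correct and follows essentially the same route as the paper's proof: the same insertion of the scheme \eqref{eq6:multiEuler}, the same telescoping identity to reduce the $\eta$-self-interaction term to $\tfrac{k^2}{2}\E[|\eta^0|^2]$, and the same H\"older-plus-a-priori-bound treatment of the remaining terms via Lemmas~\ref{lem:apriori} and \ref{ass:X0}. The only (harmless) deviations are cosmetic: you use the conjugate pair $(p,q)$ where the paper uses $\rho=\max(2,p)$ and its conjugate, and you bound $\|g(X^{i-1})\Delta W^i\|_{L^p}$ by independence and Gaussian moments (note the factorisation should be an inequality, $|g(X^{i-1})\Delta W^i|\le |g(X^{i-1})|\,|\Delta W^i|$) where the paper invokes Lemma~\ref{lem:hoelder}.
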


\begin{proof}
  The nonnegativity follows immediately from the monotonicity of $f$.
  To prove the second inequality, we insert the scheme 
  \eqref{eq6:multiEuler} 
  and obtain
  \begin{align}
    \notag &k \sum_{i=1}^{N} \E \big[ \inner{\eta^i - \eta^{i-1}}{X^i - 
    X^{i-1}}\big]\\
     &\quad = k \sum_{i=1}^{N} \E \big[ \inner{\eta^i - \eta^{i-1}}{k
     (b(X^i) - \eta^i) + g(X^{i-1}) \Delta W^i}\big] \notag\\
    \label{eq6:proofestimate1}
    &\quad = - k^2 \sum_{i=1}^{N}
    \E \big[ \inner{\eta^i - \eta^{i-1}}{\eta^i}\big]\\
    \label{eq6:proofestimate2}& \qquad + k \sum_{i=1}^{N} \E \big[ 
    \inner{\eta^i - \eta^{i-1} }{kb(X^i) + g(X^{i-1}) \Delta W^i}\big].
  \end{align}
  For \eqref{eq6:proofestimate1} we obtain 
  \begin{align*}
    - k^2 \sum_{i=1}^{N} \E \big[ \inner{\eta^i - \eta^{i-1}}{\eta^i}\big]
    &=- \frac{k^2}{2} \sum_{i=1}^{N} \E \big[ |\eta^i |^2 - |\eta^{i-1}|^2 
    + |\eta^i - \eta^{i-1} |^2\big] \\
    &\leq - \frac{k^2}{2} \big(\E \big[ |\eta^N |^2\big] - \E \big[ 
    |\eta^{0}|^2\big]\big)
    \leq \frac{k^2}{2} \E \big[ |\eta^0|^2\big]
  \end{align*}
  because of the telescopic structure. Furthermore, it follows
  from Assumptions~\ref{ass:f} and \ref{ass:X0} that 
  \begin{align*}
    \big(\E \big[ |\eta^0|^2\big]\big)^{\frac{1}{2}} 
    \le \beta \big( 1 + \big(\E \big[ |X_0|^{2p-2} \big]\big)^{\frac{1}{2}} \big) < 
    \infty.
  \end{align*}
  For \eqref{eq6:proofestimate2} we apply H\"older's inequality with $\rho = 
  \max(2,p)$ and $\frac{1}{\rho} + \frac{1}{\rho'} = 1$ 
  to obtain
  \begin{align*}
    &k \sum_{i=1}^{N} \E [\inner{\eta^i - \eta^{i-1} }{kb(X^i) 
    + g(X^{i-1}) \Delta W^i}]\\
    &\quad \leq k \sum_{i=1}^{N} \big(\E [ |\eta^i - \eta^{i-1} |^{\rho'}
    ]\big)^{\frac{1}{\rho'}} 
    \big(\E[ |k b(X^i) + g(X^{i-1}) \Delta W^i|^{\rho}]\big)^{\frac{1}{\rho}} \\
    &\quad \leq \Big(k \sum_{i=1}^{N} \E [ |\eta^i - \eta^{i-1}|^{\rho'}
    ]\Big)^{\frac{1}{\rho'}} 
    \Big(k \sum_{i=1}^{N} \E[ |k b(X^i) + g(X^{i-1}) \Delta
    W^i|^{\rho}]\Big)^{\frac{1}{\rho}}.  
  \end{align*}
  Then, from applications of the triangle inequality and
  Lemma~\ref{lem:apriori}, we get
  \begin{align*}
    \Big(k \sum_{i=1}^{N} \E [ |\eta^i - \eta^{i-1}|^{\rho'}
    ]\Big)^{\frac{1}{\rho'}} 
    &\le \Big(k \sum_{i=1}^{N} \E [ |\eta^i |^{\rho'}
    ]\Big)^{\frac{1}{\rho'}} 
    + \Big(k \sum_{i=1}^{N} \E [ |\eta^{i-1}|^{\rho'}
    ]\Big)^{\frac{1}{\rho'}} \\
    &\le K_\eta^{\frac{1}{\rho'}} + \big( K_\eta + \E[ |\eta^0|^{\rho'} ]
    \big)^{\frac{1}{\rho'}}
    \le 2 K_\eta^{\frac{1}{\rho'}} + \big( \E[ |\eta^0|^{\rho'} ]    
    \big)^{\frac{1}{\rho'}}.
  \end{align*}
  We apply the polynomial growth bound satisfied 
  by $f$ and see that, for $p \in [2,\infty)$,
  \begin{align*}
    \big( \E[ |\eta^0|^{\rho'} ]\big)^{\frac{1}{\rho'}}
    =\big( \E[ |\eta^0|^{q} ]\big)^{\frac{1}{q}}
    \le \beta( 1 + \| X_0 \|_{L^p(\Omega;\R^d)}^{p-1})
  \end{align*}
  is fulfilled, while for $p \in [1,2)$ we have
  \begin{align*}
    \big( \E[ |\eta^0|^{\rho'} ]\big)^{\frac{1}{\rho'}}
    = \big(\E[ |\eta^0|^2 ] \big)^{\frac{1}{2}}
    &\le \beta\big( 1 + \big(\E[ |X_0|^{2p-2} ]\big)^{\frac{1}{2}} \big)
    = \beta\big( 1 + \| X_0 \|_{L^{2p-2}(\Omega;\R^d)}^{p-1}  \big).
  \end{align*}
  In both cases the appearing terms are finite because of 
  Assumption~\ref{ass:X0}.
  Moreover, a further application of the triangle inequality yields
  \begin{align*}
    &\Big(k \sum_{i=1}^{N} \E[ |k b(X^i) + g(X^{i-1}) \Delta
    W^i|^{\rho}]\Big)^{\frac{1}{\rho}}\\
    &\quad \le \Big(k \sum_{i=1}^{N} \E[ |k b(X^i)|^{\rho}]\Big)^{\frac{1}{\rho}}
    + \Big(k \sum_{i=1}^{N} \E[ | g(X^{i-1}) \Delta
    W^i|^{\rho}]\Big)^{\frac{1}{\rho}}.
  \end{align*}
  Due to the linear growth bound \eqref{eq4:lingrowthbg} on $b$ and
  the \emph{a priori}
  bound \eqref{eq4:aprioriX}, it then follows that
  \begin{align*}
    \Big(k \sum_{i=1}^{N} \E[ |k b(X^i)|^{\rho}]\Big)^{\frac{1}{\rho}}
    &\le L_b k \Big( k \sum_{i=1}^{N} \E\big[ \big(1 + | X^i|
    \big)^{\rho}  \big]\Big)^{\frac{1}{\rho}}\\
    &\le L_b k \Big( T^{\frac{1}{\rho}} + \big( \max\big(\frac{1}{2 \mu},T\big) 
    K_X \big)^{\frac{1}{\rho}} \Big).
  \end{align*}
  By application of Lemma~\ref{lem:hoelder} with constant $C_{\rho}$, we obtain
  \begin{align*}
    \E[ |g(X^{i-1}) \Delta W^i|^{\rho}]
    = \E \Big[ \Big|\int_{t_{i-1}}^{t_i} g(X^{i-1}) \diff{W(s)}\Big|^{\rho} \Big]
    \leq C_{\rho}^{\rho} k^{\frac{\rho}{2}}
    \E \big[ |g(X^{i-1})|^{\rho} \big].
  \end{align*}
  Together with the linear growth bound \eqref{eq4:lingrowthbg} on
  $g$, this shows that 
  \begin{align*}
    \Big( k \sum_{i=1}^{N} \E[ |g(X^{i-1}) \Delta 
    W^i|^{\rho}]\Big)^{\frac{1}{\rho}}
    &\leq C_{\rho} k^{\frac{1}{2}} \Big( k \sum_{i=1}^{N} \E \big[
    |g(X^{i-1})|^{\rho} \big] \Big)^{\frac{1}{\rho}}\\
    &\le C_{\rho} L_g k^{\frac{1}{2}} \Big( T^{\frac{1}{\rho}} + \big(
    \max\big(\frac{1}{2 \mu},T\big) K_X \big)^{\frac{1}{\rho}} \Big).
  \end{align*}
  Putting the estimates together proves the desired bound.
\end{proof}

We are now prepared to state and prove the main result of this section.
While the main ingredients of the proof
still consist of techniques introduced in \cite[Sect.~4]{nochetto2000} for
deterministic problems, the proof is somewhat more technical than the proof of
Theorem~\ref{thm:conv_Hoelder}. In particular, 
due to the presence of Lipschitz perturbations in
the general problem \eqref{eq1:SDE} it is no longer possible to avoid an
application of a Gronwall lemma. Moreover, 
as in \cite[Sect.~4]{nochetto2000} we impose the following additional
assumption on the multi-valued mapping $f$.

\begin{assumption}
  \label{ass:NochettoIneq}
  There exists $\gamma \in (0,\infty)$ such that, for every $v,w, z
  \in D(f)$, $f_v \in f(v)$, $f_w \in f(w)$, and $f_z \in f(z)$, 
  \begin{align*}
    \inner{f_v - f_z}{z-w} \leq \gamma \inner{f_v - f_w}{v-w}.
  \end{align*}
\end{assumption}

In Lemma~\ref{lem:NochettoIneq}, we already proved that, if $f$ is the
subdifferential of a convex potential, then Assumption~\ref{ass:NochettoIneq}
is satisfied with $\gamma = 1$. For a further example, we refer to
Section~\ref{sec:examples}. 

\begin{theorem}
  \label{thm:conv}
  Let Assumptions~\ref{ass:f} -- \ref{ass:X0} and
  Assumption~\ref{ass:NochettoIneq} be satisfied. 
  Let the step size $k = \frac{T}{N}$, $N \in \N$, be such that $8 L_b k \in 
  [0,1)$. Then there exists a constant $C \in (0,\infty)$ independent 
  of $k$ such 
  that
  \begin{align*}
    \max_{t \in [0,T]} \| X(t) - \X(t) \|_{L^2(\Omega;\R^d)}
    \le C k^{\frac{1}{4}}.
  \end{align*}
\end{theorem}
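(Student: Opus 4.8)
The plan is to mirror the structure of the proof of Theorem~\ref{thm:conv_Hoelder}, replacing the single-valued drift by the multi-valued selection $\eta$ and the simple interpolation defect $g_0(\mathcal{W}-W)$ by the genuine stochastic-integral defect $G-\G$. Writing $E := X - \X$, I would use the exact representation \eqref{eq:sol} together with the discrete representation \eqref{eq6:Xlin} to split $E = E_1 + E_2$, where $E_1(t) = \int_0^t[(b(X(s)) - b(\overline{\X}(s))) - (\eta(s) - \overline{\Eta}(s))]\diff{s}$ is absolutely continuous with $E_1(0)=0$, and $E_2 = G - \G$. Since $E_1$ has absolutely continuous sample paths, for almost every $t$ one has $\frac{1}{2}\frac{\mathrm{d}}{\diff{t}}|E_1(t)|^2 = \inner{\dot E_1(t)}{E_1(t)} = \inner{\dot E_1(t)}{E(t)} - \inner{\dot E_1(t)}{E_2(t)}$, which after integration yields the multi-valued analogue of the error representation in Lemma~\ref{lem:err_rep2}.

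In the term $\inner{\dot E_1}{E}$ I would isolate the monotone contribution $-\inner{\eta(t) - \overline{\Eta}(t)}{E(t)}$ and, exactly as in Lemma~\ref{lem:err_rep2}, decompose $\X(t) = \tfrac{t-t_{n-1}}{k}\overline{\X}(t) + \tfrac{t_n-t}{k}\underline{\X}(t)$. The coefficient of $\overline{\X}$ is then annihilated by monotonicity of $f$ (Definition~\ref{def:mono}), while the coefficient of $\underline{\X}$ is controlled by Assumption~\ref{ass:NochettoIneq} applied with $v = \overline{\X}(t)$, $z = X(t)$, $w = \underline{\X}(t)$, giving the bound $\tfrac{t_n-t}{k}\gamma\inner{\eta^n - \eta^{n-1}}{X^n - X^{n-1}}$. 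Integrating the weight over each subinterval produces $\tfrac{\gamma}{2}k\sum_i\inner{\eta^i - \eta^{i-1}}{X^i - X^{i-1}}$, whose expectation is $O(k^{1/2})$ by Lemma~\ref{lem:coerc}; this single term is the source of the rate $k^{1/4}$. The remaining Lipschitz contribution $\inner{b(X) - b(\overline{\X})}{E}$ is bounded by $L_b|X-\overline{\X}||E|$; writing $X - \overline{\X} = E + (\X - \overline{\X})$ and using $|\X - \overline{\X}| \le |X^n - X^{n-1}|$ together with the a priori bound \eqref{eq4:aprioriX}, a weighted Young inequality converts this into a Gronwall term $C\int_0^{t_n}\E|E|^2\diff{s}$ plus an $O(k)$ remainder.

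The genuinely new difficulty is the cross term $-\int_0^{t_n}\inner{\dot E_1}{E_2}\diff{t}$, and in particular its multi-valued part $\int_0^{t_n}\inner{\eta - \overline{\Eta}}{G - \G}\diff{t}$. A crude Cauchy--Schwarz estimate is insufficient here: it would only produce $O(k^{1/4})$ for the squared error, because $\eta$ is a priori bounded only in $L^q(\Omega)$ (with possibly $q \le 2$) while \eqref{eq6:term3} controls $\|G-\G\|_{L^2}$ only to order $k^{1/4}$ before the estimate is closed. The missing half-power must be recovered from the martingale structure. On each subinterval $(t_{j-1},t_j]$ I would split $G - \G$ into its value at the left endpoint $t_{j-1}$, which is $\F_{t_{j-1}}$-measurable, and a conditionally centered increment; the pairing of the $\F_{t_{j-1}}$-measurable part of $\eta - \overline{\Eta}$ against this centered increment vanishes in expectation, and the surviving increment contributions are then estimated in $L^\rho(\Omega)$ by the regularity bounds \eqref{eq6:Greg1} and \eqref{eq6:Greg2}, which furnish the factor $k^{1/2}$, matched against $\eta - \overline{\Eta}$ in $L^q(\Omega)$ via Hölder's inequality and summed using \eqref{eq4:aprioriEta}. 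The Lipschitz part of the cross term is handled along the same lines but more easily using \eqref{eq6:term3}. I expect this bookkeeping --- carefully matching the Hölder exponents to the Burkholder--Davis--Gundy estimates of Lemma~\ref{lem:Gerror} while exploiting adaptedness to cancel the leading non-centered contributions, where $\eta$ lives only in $L^q$ --- to be the main obstacle.

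Finally, I would combine the three estimates with the bound \eqref{eq6:term3} on $\|E_2(t)\|_{L^2(\Omega;\R^d)}^2$ via $|E(t)|^2 \le 2|E_1(t)|^2 + 2|E_2(t)|^2$, noting that, in contrast to the single-valued case, $E_2$ no longer vanishes at the grid points, so the diffusion defect must be carried throughout. This produces an inequality of the form $\E|E(t)|^2 \le Ck^{1/2} + C\int_0^t\E|E(s)|^2\diff{s}$ valid for all $t \in [0,T]$. An application of Gronwall's lemma then gives $\max_{t\in[0,T]}\E|E(t)|^2 \le Ck^{1/2}$, and taking square roots completes the proof.
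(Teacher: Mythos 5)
Your overall architecture matches the paper's: the same splitting $E = E_1 + E_2$ with $E_2 = G - \G$, the same treatment of $\int_0^t\inner{\dot E_1(s)}{E(s)}\diff{s}$ via monotonicity, Assumption~\ref{ass:NochettoIneq} and Lemma~\ref{lem:coerc}, the same use of Lemma~\ref{lem:Gerror}, and a final Gronwall argument; you also correctly diagnose that a crude Cauchy--Schwarz estimate of the cross term fails. But there is a genuine gap in how you handle $-\int_0^t\inner{\dot E_1(s)}{E_2(s)}\diff{s}$. By replacing the exact expansion $|E|^2 = |E_1|^2 + 2\inner{E_1}{E_2} + |E_2|^2$ with the Young bound $|E|^2\le 2|E_1|^2+2|E_2|^2$, you discard the term $2\inner{E_1(t)}{E_2(t)}$, which the paper keeps and combines with $-2\int_0^t\inner{\dot E_1(s)}{E_2(s)}\diff{s}$ to obtain the identity \eqref{eq6:2int}, in which only \emph{increments} $E_2(t)-E_2(s)$ appear. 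Without that combination you are left with the values $E_2(s)$ themselves. Your subinterval splitting $E_2(s) = E_2(t_{j-1}) + \bigl(E_2(s)-E_2(t_{j-1})\bigr)$ does handle the short increment (which is $O(k^{1/2})$ by \eqref{eq6:Greg1}--\eqref{eq6:Greg2}, with or without any cancellation), but it leaves the left-endpoint contribution $\sum_j\int_{t_{j-1}}^{t_j}\inner{\dot E_1(s)}{E_2(t_{j-1})}\diff{s}$ unaddressed. This term does not vanish in expectation and cannot be estimated at order $k^{1/2}$ with the available bounds: $\dot E_1$ is controlled only in $L^{\rho'}(\Omega)$ with $\rho'=\min(2,q)\le 2$, so the H\"older pairing requires $E_2(t_{j-1})$ in $L^{\rho}(\Omega)$ with $\rho=\max(2,p)$, where one only has an $O(1)$ bound; the $O(k^{1/2})$-smallness of $E_2$ (modulo the Gronwall term) established in \eqref{eq6:term3} lives only in $L^2(\Omega)$. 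Moreover, the cancellation you invoke is vacuous: on $(t_{j-1},t_j]$ one has $\eta(s)-\overline{\Eta}(s)=\eta(s)-\eta^j$, and neither $\eta(s)$ nor $\eta^j$ is $\F_{t_{j-1}}$-measurable, so there is no ``$\F_{t_{j-1}}$-measurable part'' to pair against the conditionally centered increment.

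The repair is precisely the identity \eqref{eq6:2int}: keeping the cross term exactly yields $\int_0^t\inner{\dot E_1(s)}{E_2(t)-E_2(s)}\diff{s}$, which on each $(t_{i-1},t_i]$ with $t_i<t$ is split at $t_i$; the long increment $E_2(t)-E_2(t_i)$ pairs with the $\F_{t_i}$-measurable quantity $E_1(t_i)-E_1(t_{i-1})$ and has zero expectation, while the short increment $E_2(t_i)-E_2(s)$ is $O(k^{1/2})$ in $L^{\rho}$ by \eqref{eq6:Greg1}--\eqref{eq6:Greg2} and is matched against $\dot E_1$ in $L^{\rho'}$ exactly as you describe. (An Abel summation of your leftover term would in effect reconstruct this identity, but that step is absent from your argument.) The remainder of your sketch goes through.
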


\begin{proof}
  Let us first introduce some additional notation.
  We will denote the error between the exact solution $X$ to \eqref{eq1:SDE} 
  and the numerical approximation $\X$ defined in \eqref{eq6:Xlin}
  by $E(t):= X(t) - \X(t)$, $t \in [0,T]$. 
  Furthermore, it will be convenient to split the error into two parts 
  \begin{align*} 
     E(t) = E_1(t) + E_2(t), \quad t \in [0,T],
  \end{align*}
  where we define
  \begin{align}
    \label{eq4:errorE1}
    E_1(t) &:= \int_{0}^{t} \big( \overline{\Eta}(s) -  \eta(s) \big)
    \diff{s} 
    + \int_{0}^{t}  \big( b(X(s)) - b(\overline{\X}(s)) \big)
    \diff{s},\\
    \label{eq4:errorE2}
    E_2(t) &:=  G(t) - \G(t)
  \end{align}
  $\P$-almost surely for every $t \in (0,T]$.
  We expand the square of the norm of $E$ as
  \begin{align} 
    \label{eq4:normError}
    |E(t)|^2  = |E_1(t)|^2 + 2 \inner{E_1(t)}{E_2(t)} + |E_2(t)|^2, \quad t \in
    [0,T].
  \end{align}
  In order to estimate the terms on the right-hand side
  of \eqref{eq4:normError}, we first observe in \eqref{eq4:errorE1} that
  $E_1$ has absolutely continuous sample paths with $E_1(0)=0$.
  Hence we have $\frac{1}{2} \frac{\mathrm{d}}{\diff{t}}|E_1(t)|^2 
  = \langle \dot{E}_1(t), E_1(t) \rangle$ for almost every $t \in [0,T]$. 
  Therefore, after integrating from $0$ to $t \in (0,T]$ we get
  \begin{align} 
    \label{eq4:normE1}
    \begin{split}
      \frac{1}{2} |E_1(t)|^2 
    = \int_{0}^{t} \inner{\dot{E}_1(s)}{E_1(s)} \diff{s}
    = \int_{0}^{t} \inner{\dot{E}_1(s)}{E(s)} \diff{s}
    - \int_{0}^{t} \inner{\dot{E}_1(s)}{E_2(s)} \diff{s}.
    \end{split}
  \end{align}
  Furthermore, we also have
  \begin{align}
    \label{eq6:normE1E2}
    \inner{E_1(t)}{E_2(t)}
    =  \Big\langle \int_0^t \dot{E}_1(s) \diff{s}, E_2(t) \Big\rangle
    =  \int_0^t \inner{\dot{E}_1(s)}{E_2(t)} \diff{s}.
  \end{align}
  Thus, after combining \eqref{eq4:normE1} and \eqref{eq6:normE1E2} we obtain
  \begin{align}
    \label{eq6:2int}
    \frac{1}{2} |E_1(t)|^2 +  \inner{E_1(t)}{E_2(t)}
    &=  \int_0^t \inner{\dot{E}_1(s)}{E(s)} \diff{s}
    +  \int_0^t \inner{\dot{E}_1(s)}{E_2(t)-E_2(s)} \diff{s}.
  \end{align}
  For the first integral on the right-hand side of 
  \eqref{eq6:2int} we insert the derivative of $E_1$ and the definition of the 
  error process $E$. This yields, for almost every $s \in (0, T]$,
  \begin{align*}
    \inner{\dot{E}_1(s)}{E(s)}
    =  \inner{\overline{\Eta}(s) - \eta(s)}{X(s) - \X(s)}
    + \inner{b(X(s)) - b(\overline{\X}(s))}{X(s) - \X(s)}.
  \end{align*}
  After recalling the definition of $\X$ we use Assumptions~\ref{ass:f} 
  and \ref{ass:NochettoIneq}. Then, for almost every $s \in (t_{n-1},t_n]$ 
  and all $n \in \{1,\ldots,N\}$, we get
  \begin{align*}
    &\inner{\overline{\Eta}(s) - \eta(s)}{X(s) - \X(s)}\\
    &\quad = \frac{t_n - s}{k}\inner{\eta^n - \eta(s) }{X(s) -
      X^{n-1}}
    + \frac{s - t_{n-1}}{k} \inner{\eta^n - \eta(s) }
    {X(s) - X^n}\\
    &\quad \leq \gamma \frac{t_n - s}{k} \inner{\eta^n - \eta^{n-1} 
    }{X^n - X^{n-1}}
    - \frac{s - t_{n-1}}{k} \inner{\eta(s)- \eta^n}{X(s) - X^n}\\
    &\quad \leq \gamma \frac{t_n - s}{k} \inner{ \eta^n - \eta^{n-1} }{X^n -
    X^{n-1}},
  \end{align*}
  where the second term in the last step is non-positive due to the
  monotonicity of $f$ (cf.~Definition~\ref{def:mono}). Moreover, because of the
  Lipschitz continuity of $b$, we have for almost every $s \in (0,T]$ that
  \begin{align*}
    &\inner{b(X(s)) - b(\overline{\X}(s))}{X(s) - \X(s)}\\
    &\quad = \inner{b(X(s)) - b(\X(s))}{X(s) - \X(s)}
    + \inner{b(\X(s)) - b(\overline{\X}(s))}{X(s) - \X(s)}\\
    &\quad \leq L_b |E(s)|^2
    + L_b |\X(s) - \overline{\X}(s)| |E(s)|
    \leq \frac{3}{2} L_b |E(s)|^2
    + \frac{L_b}{2} |\X(s) - \overline{\X}(s)|^2,
  \end{align*}
  where we also made use of Young's inequality.
  In addition, for every $n \in \{1,\ldots,N\}$ and 
  $s \in (t_{n-1},t_n]$, we have that
  \begin{align*}
    \X(s) - \overline{\X}(s)
    = \frac{s-t_{n-1}}{k} X^n + \frac{t_n-s}{k} X^{n-1} - X^n
    = - \frac{t_n-s}{k} \big(X^n - X^{n-1}\big).
  \end{align*}
  Therefore,
  \begin{align*}
    \inner{b(X(s)) - b(\overline{\X}(s))}{X(s) - \X(s)}
    \leq \frac{3}{2} L_b |E(s)|^2
    + \frac{L_b(t_n-s)^2}{2 k^2} |X^n - X^{n-1}|^2.
  \end{align*}
  Altogether, for every $t \in (t_{n-1}, t_n]$ and $n \in
  \{1,\ldots,N\}$, we have shown that
  \begin{align*}
    \int_{t_{n-1}}^t \inner{\dot{E}_1(s)}{E(s)} \diff{s}
    &\le \frac{\gamma}{2} k
    \inner{ \eta^n - \eta^{n-1} }{X^n - X^{n-1}}\\
    &\quad + \frac{3}{2} L_b \int_{t_{n-1}}^{t} | E(s)|^2 \diff{s}
    + \frac{L_b}{6} k |X^n - X^{n-1}|^2,
  \end{align*}
  where we also inserted that
  $\int_{t_{n-1}}^{t} (t_n - s) \diff{s} 
  \le \int_{t_{n-1}}^{t_n} (t_n - s) \diff{s} 
  = \frac{1}{2} k^2$ as well as $\int_{t_{n-1}}^{t} (t_n - s)^2 \diff{s} 
  \le \frac{1}{3} k^3$. It follows that, for every
  $n \in \{1,\ldots,N\}$ and $t \in (t_{n-1},t_n]$,
  \begin{align*}
      &\int_0^t\inner{\dot{E}_1(s)}{E(s)} \diff{s}
      = \sum_{i = 1}^{n-1} \int_{t_{i-1}}^{t_i} 
      \inner{\dot{E}_1(s)}{E(s)} \diff{s}
      + \int_{t_{n-1}}^t \inner{\dot{E}_1(s)}{E(s)} \diff{s} 
      \notag\\
      &\quad \le \frac{\gamma}{2} k \sum_{i = 1}^n
      \inner{ \eta^i - \eta^{i-1} }{X^i - X^{i-1}}
      + \frac{L_b}{6} k \sum_{i = 1}^n
      |X^i - X^{i-1}|^2 \notag
      + \frac{3}{2} L_b \int_0^t |E(s)|^2 \diff{s}.
  \end{align*}
  Hence, together with Lemma~\ref{lem:apriori} and Lemma~\ref{lem:coerc}
  this shows that 
  \begin{align}
    \label{eq6:term1}
    \int_0^t \E\big[\inner{\dot{E}_1(s)}{E(s)}\big] \diff{s}
    &\le \frac{\gamma}{2} K_{\delta \eta} k^{\frac{1}{2}}
    + \frac{L_b}{3} K_X k + \frac{3}{2} L_b \int_0^t \E\big[|E(s)|^2\big] \diff{s}.
  \end{align}
  Next, we give an estimate for the second 
  integral on the right-hand side of \eqref{eq6:2int}. 
  For every $n \in \{1,\ldots,N\}$ and $t \in (t_{n-1},t_n]$ we decompose the
  integral as follows
  \begin{align}
    \label{eq6:term2}
    \begin{split}
      \int_{0}^{t} \inner{\dot{E}_1(s)}{E_2(t) - E_2(s)} \diff{s}
      &= \sum_{i = 1}^{n-1} \int_{t_{i-1}}^{t_i} 
      \inner{\dot{E}_1(s)}{E_2(t) - E_2(s)} \diff{s}\\
      &\quad + \int_{t_{n-1}}^t \inner{\dot{E}_1(s)}{E_2(t) - E_2(s)} \diff{s}.
    \end{split}
  \end{align}
  For every $i \in \{1,\ldots,n-1\}$ we then add
  and subtract $E_2(t_i)$ in the second slot of the inner product in the first
  term on the right-hand side of \eqref{eq6:term2}. This gives
  \begin{align*}
    \int_{t_{i-1}}^{t_i} \inner{\dot{E}_1(s)}{E_2(t) - E_2(s)} \diff{s}
    &= \int_{t_{i-1}}^{t_i} 
    \inner{\dot{E}_1(s)}{E_2(t) - E_2(t_{i})} \diff{s}\\
    &\quad + \int_{t_{i-1}}^{t_i} 
    \inner{\dot{E}_1(s)}{E_2(t_{i}) - E_2(s)} \diff{s}.
  \end{align*}
  After inserting the definition of $E_2$ from
  \eqref{eq4:errorE2} the first integral is then equal to
  \begin{align*}
    &\int_{t_{i-1}}^{t_i} \inner{\dot{E}_1(s)}{E_2(t) - E_2(t_{i})} \diff{s}
    = \Big\langle \int_{t_{i-1}}^{t_i} \dot{E}_1(s) \diff{s}, E_2(t) -
    E_2(t_{i}) \Big\rangle\\
    &\quad = \inner{E_1(t_i) - E_1(t_{i-1})}{E_2(t) - E_2(t_{i})}\\
    &\quad = \inner{E_1(t_i) - E_1(t_{i-1})}{G(t) -
    \G(t) - ( G(t_{i}) - \G(t_i)) } \\
    &\quad = \Big\langle E_1(t_i) - E_1(t_{i-1}),
    \int_{t_i}^{t} g(X(s)) \diff{W(s)} \Big\rangle\\
    &\qquad - \Big\langle E_1(t_i) - E_1(t_{i-1}),
    \int_{t_i}^{t_{n-1}} g(\underline{\X}(s)) \diff{W(s)} + 
    \frac{t - t_{n-1}}{k} g(X^{n-1}) \Delta W^n \Big\rangle
  \end{align*}
  for all $i, n \in \{1,\ldots,N\}$, $i < n$, and $t \in (t_{n-1},t_n]$.
  Since $E_1(t_i) - E_1(t_{i-1}) = E(t_i) - E(t_{i-1})
  - (E_2(t_i) - E_2(t_{i-1}))$ is square-integrable
  and $\F_{t_i}$-measurable it therefore follows that
  \begin{align*}
    \E \Big[ \int_{t_{i-1}}^{t_i} \inner{\dot{E}_1(s)}{E_2(t) - E_2(t_{i})}
    \diff{s} \Big] = 0
  \end{align*}
  for all $n \in \{1,\ldots,N\}$, $t \in (t_{n-1},t_n]$ and $t_i < t$.
  Hence, after taking expectations in \eqref{eq6:term2} we arrive at
  \begin{align*}
    &\E \Big[ \int_{0}^{t} \inner{\dot{E}_1(s)}{E_2(t) - E_2(s)} \diff{s}
    \Big]\\ 
    &= \sum_{i = 1}^{n-1} \E \Big[ \int_{t_{i-1}}^{t_i} 
    \inner{\dot{E}_1(s)}{E_2(t_i) - E_2(s)} \diff{s} \Big]
    + \E \Big[ \int_{t_{n-1}}^t \inner{\dot{E}_1(s)}{E_2(t) - E_2(s)} \diff{s}
    \Big]\\
    &\leq \sum_{i = 1}^{n} \E \Big[ \int_{t_{i-1}}^{t_i} 
    |\dot{E}_1(s)| |E_2(t_i) - E_2(s)| \diff{s} \Big].
  \end{align*}
  Inserting the definitions \eqref{eq4:errorE1} and
  \eqref{eq4:errorE2} of $E_1$ and $E_2$ and applying H\"older's inequality 
  with $\rho = \max(2,p)$ and
  $\frac{1}{\rho}+\frac{1}{\rho'}=1$, we get
  \begin{align*}
    &\E \Big[ \int_{0}^{t} \inner{\dot{E}_1(s)}{E_2(t) - E_2(s)} \diff{s}
    \Big]\\ 
    &\leq \sum_{i = 1}^{n} \int_{t_{i-1}}^{t_i} \E \big[
    \big(|\eta^i - \eta(s)| + |b(X(s)) - b(X^i)| \big)\\
    &\qquad \times \big(| G(t_i) - G(s)| + |\G(t_i) - \G(s)| \big) \big]
    \diff{s}\\
    &\leq \sum_{i = 1}^{n}
    \Big( \int_{t_{i-1}}^{t_i} \E \big[
    |\eta^i - \eta(s)|^{\rho'} \big] \diff{s} \Big)^{\frac{1}{\rho'}}
    \Big( \int_{t_{i-1}}^{t_i}
    \E \big[ | G(t_i) - G(s)|^\rho \big] \diff{s} \Big)^{\frac{1}{\rho}}\\
    &\quad + \sum_{i = 1}^{n}
    \Big( \int_{t_{i-1}}^{t_i} \E \big[
    |\eta^i - \eta(s)|^{\rho'} \big] \diff{s} \Big)^{\frac{1}{\rho'}}
    \Big( \int_{t_{i-1}}^{t_i}
    \E \big[ | \G(t_i) - \G(s)|^\rho \big] \diff{s} \Big)^{\frac{1}{\rho}}\\
    &\quad + \sum_{i = 1}^{n}
    \Big( \int_{t_{i-1}}^{t_i} \E \big[
    |b(X(s)) - b(X^i)|^{\rho'} \big] \diff{s} \Big)^{\frac{1}{\rho'}}
    \Big( \int_{t_{i-1}}^{t_i}
    \E \big[ | G(t_i) - G(s)|^\rho \big] \diff{s} \Big)^{\frac{1}{\rho}}\\
    &\quad + \sum_{i = 1}^{n}
    \Big( \int_{t_{i-1}}^{t_i} \E \big[
    |b(X(s)) - b(X^i)|^{\rho'} \big] \diff{s} \Big)^{\frac{1}{\rho'}}
    \Big( \int_{t_{i-1}}^{t_i}
    \E \big[ | \G(t_i) - \G(s)|^\rho \big] \diff{s} \Big)^{\frac{1}{\rho}}\\
    &=:  \Gamma_1 + \Gamma_2 + \Gamma_3 + \Gamma_4.
  \end{align*}
  In the following, we will estimate $\Gamma_1$, $\Gamma_2$, $\Gamma_3$, 
  and $\Gamma_4$ separately. For $\Gamma_1$ we obtain after an 
  application of H\"older's inequality for sums that
  \begin{align*}
    \Gamma_1 
    &\leq 
    \Big( \sum_{i = 1}^{n} \int_{t_{i-1}}^{t_i} \E \big[
    |\eta^i - \eta(s)|^{\rho'} \big] \diff{s} \Big)^{\frac{1}{\rho'}}
    \Big( \sum_{i = 1}^{n} \int_{t_{i-1}}^{t_i}
    \E \big[ | G(t_i) - G(s)|^\rho \big] \diff{s} \Big)^{\frac{1}{\rho}}\\
    &\leq 
    \Big(\Big( k \sum_{i = 1}^{n}
    \E \big[ |\eta^i|^{\rho'} \big] \Big)^{\frac{1}{\rho'}}
    + \Big( \int_{0}^{t_n} \E \big[
    |\eta(s)|^{\rho'} \big] \diff{s}
    \Big)^{\frac{1}{\rho'}}\Big)\\
    &\qquad \times
    \Big( \sum_{i = 1}^{n} \int_{t_{i-1}}^{t_i}
    \E \big[ | G(t_i) - G(s)|^\rho \big] \diff{s} \Big)^{\frac{1}{\rho}}. 
  \end{align*}
  If $p \in [2,\infty)$ then $\rho = p$ and $\rho' = q$. In this case all 
  integrals appearing are finite due to the bounds in 
  Theorem~\ref{thm:exact} and Lemma~\ref{lem:apriori}.
  Moreover, if $p \in (1,2)$ then $\rho = \rho' = 2 < q$. Then it follows from
  further applications of H\"older's inequality and Jensen's inequality that
  \begin{align*}
    k \sum_{i = 1}^{n} \E \big[ |\eta^i|^{2} \big]     
    \le T^{\frac{q-2}{2}} \Big( k \sum_{i = 1}^{n}
    \E \big[ |\eta^i|^{q} \big] \Big)^{\frac{2}{q}} 
  \end{align*}
  as well as
  \begin{align*}
    \int_{0}^{t_n} \E \big[ |\eta(s)|^{2} \big] \diff{s}
    \le T^{\frac{q-2}{2}} \Big(
    \int_{0}^{t_n} \E \big[ |\eta(s)|^{q} \big] \diff{s}
    \Big)^{\frac{2}{q}}.
  \end{align*}
  Hence, we arrive at the same conclusion. If $p = 1$ then the processes 
  $(\eta(t))_{t \in [0,T]}$ and $(\eta^n)_{n \in \{1,\ldots,N\}}$ are globally 
  bounded due to the bound on $f$ in Assumption~\ref{ass:f}. Using 
  Lemma~\ref{lem:Gerror} we see that
  \begin{align*}
    \Big( \sum_{i = 1}^{n} \int_{t_{i-1}}^{t_i}
    \E \big[ | G(t_i) - G(s)|^\rho \big] \diff{s} \Big)^{\frac{1}{\rho}}
    \leq K_{\rho} k^{\frac{1}{2}}
    \Big( \int_0^{t_n} \big( 1 + \E \big[ | X(s)|^\rho \big] \big) \diff{s}
    \Big)^{\frac{1}{\rho}}.
  \end{align*}
  Altogether, this yields
  \begin{align*}
    \Gamma_1 \le C_{\Gamma_1} k^{\frac{1}{2}}
  \end{align*}
  for a suitable constant $C_{\Gamma} \in (0,\infty)$, which is independent 
  of $k$.
  To estimate $\Gamma_2$ we argue analogously as in the case for $\Gamma_1$ to obtain 
  that
  \begin{align*}
    \Gamma_2 
    &\leq 
    \Big(\Big( k \sum_{i = 1}^{n}
    \E \big[ |\eta^i|^{\rho'} \big] \Big)^{\frac{1}{\rho'}}
    + \Big( \int_{0}^{t} \E \big[
    |\eta(s)|^{\rho'} \big] \diff{s}
    \Big)^{\frac{1}{\rho'}}\Big)\\
    &\qquad \times
    \Big( \sum_{i = 1}^{n} \int_{t_{i-1}}^{t_i}
    \E \big[ | \G(t_i) - \G(s)|^\rho \big] \diff{s} \Big)^{\frac{1}{\rho}}. 
  \end{align*}
  The first factor is bounded as we saw in the case for $\Gamma_1$. Furthermore, using 
  Lemma~\ref{lem:Gerror}, we have that
  \begin{align*}
    \Big( \sum_{i = 1}^{n} \int_{t_{i-1}}^{t_i}
    \E \big[ | \G(t_i) - \G(s)|^\rho \big] \diff{s} \Big)^{\frac{1}{\rho}}
    \leq K_{\rho} k^{\frac{1}{2}}
    \Big( k \sum_{i =1}^{n} \big( 1 + \E \big[ | X^{i-1}|^\rho \big] \big) \diff{s}
    \Big)^{\frac{1}{\rho}}.
  \end{align*}
  Due to the \emph{a priori} bound \eqref{eq4:aprioriX}, it follows that there exists a 
  constant $C_{\Gamma_2} \in (0,\infty)$, which does not depend on $k$ 
  such that
  \begin{align*}
    \Gamma_2 \le C_{\Gamma_2} k^{\frac{1}{2}}.
  \end{align*}
  The estimates $\Gamma_3$ and $\Gamma_4$ follow analogously with the 
  only new term that appears is of the form
  \begin{align*}
    &\Big( \sum_{i = 1}^{n} \int_{t_{i-1}}^{t_i} \E \big[
    |b(X(s)) - b(X^i)|^{\rho'} \big] \diff{s} \Big)^{\frac{1}{\rho'}}\\
    &\leq L_b \Big( \sum_{i = 1}^{n} \int_{t_{i-1}}^{t_i} \E \big[
    |X(s) - X^i|^{\rho'} \big] \diff{s} \Big)^{\frac{1}{\rho'}}\\
    &\leq L_b \Big( \int_{0}^{t_n} \E \big[ |X(s)|^{\rho'} \big] \diff{s} 
    \Big)^{\frac{1}{\rho'}}
    + L_b \Big( k \sum_{i = 1}^{n} \E \big[|X^i|^{\rho'} \big] \diff{s} 
    \Big)^{\frac{1}{\rho'}},
  \end{align*}
  which is bounded due to Theorem~\ref{thm:exact} and the \emph{a priori} bound 
  \eqref{eq4:aprioriX}. Therefore, there exist constants $C_{\Gamma_3}, 
  C_{\Gamma_4} \in (0,\infty)$ such that
  \begin{align*}
    \Gamma_3 \le C_{\Gamma_3} k^{\frac{1}{2}}
    \quad \text{and} \quad
    \Gamma_4 \le C_{\Gamma_4} k^{\frac{1}{2}}.
  \end{align*}
  Hence, we obtain 
  \begin{align}
    \label{eq6:term2c}
    \E \Big[ \int_{0}^{t} \inner{\dot{E}_1(s)}{E_2(t) - E_2(s)} \diff{s}
    \Big] \le (C_{\Gamma_1} + C_{\Gamma_2} + C_{\Gamma_3} + 
    C_{\Gamma_4}) k^{\frac{1}{2}}
    =: C_{\Gamma} k^{\frac{1}{2}} .
  \end{align}
  After taking expectations in \eqref{eq4:normError}
  and inserting \eqref{eq6:2int}, \eqref{eq6:term1}, \eqref{eq6:term2c} as well
  as \eqref{eq6:term3} from Lemma~\ref{lem:Gerror}, 
  we obtain for every $t \in (0,T]$ that
  \begin{align*}
    \E\big[ |E(t)|^2 \big] 
    &\le \gamma K_{\delta \eta} k^{\frac{1}{2}}
    + \frac{2 L_b}{3} K_X k + 2 C_\Gamma k^{\frac{1}{2}} 
    + K_G k + \big( 3 L_b + 2 L_g^2 \big) \int_0^t |E(s)|^2 \diff{s}.
  \end{align*} 
  The assertion then follows from an application of Gronwall's lemma, see 
  for example, \cite[Appendix~B]{evans1998}.
\end{proof}

\begin{remark}\label{rem:noteta}
  Up to this point, we only proved convergence for $X$ but not for $\eta$.
  However, from the existence of $X^n$ we also obtain that 
  \begin{align*}
    k \eta^n = - (X^n - X^{n-1}) + k b(X^n) + g(X^{n-1}) \Delta W^n 
    \text{ a.s. in } \Omega.
  \end{align*}
  Analogously, we can write for the exact solution $\eta$ that 
  \begin{align*}
    \int_{0}^{t} \eta(s) \diff{s} = - X(t) + X_0 + \int_0^t b(X(s)) \diff{s} 
    + \int_{0}^{t} g(X(s)) \diff{W(s)}.
  \end{align*}
  Therefore, from the convergence of $\X$ to $X$ and the Lipschitz continuity
  of $b$ and $g$ we also obtain the estimate 
  \begin{align*}
    \Big\| \int_{0}^{t_n} \eta(s) \diff{s} - k\sum_{j = 1}^n \eta^j 
    \Big\|_{L^2(\Omega;\R^d)} \le C k^{\frac{1}{4}}
  \end{align*}
  for every $n \in \{1,\ldots,N\}$. 
\end{remark}

\section{Examples}
\label{sec:examples}

\subsection{Discontinuous drift coefficient}
In this example, we show that Assumption~\ref{ass:f} 
includes overdamped Langevin-type equations with a possibly 
discontinuous drift $f$. We consider the convex,
nonnegative, yet not continuously differentiable function $\Phi(x) := |x|$, $x
\in \R$, which has a multi-valued subdifferential $f \colon \R \to 2^{\R}$
defined by 
\begin{align*}
  f(x) :=
  \begin{cases}
    \{1\},& \text{ if } x > 0,\\
    [-1,1],& \text{ if } x = 0,\\
    \{-1\},& \text{ if } x < 0.
  \end{cases}
\end{align*}
This mapping fulfills Assumption~\ref{ass:f} for $p=1$. To be more precise, $f$
is a monotone function and there exists no proper  
monotone extension of its graph. In fact, the subdifferential of any proper,
lower semi-continuous and convex function is a maximal monotone mapping by a 
well-known theorem of Rockafellar, cf. \cite[Cor.~31.5.2]{rockafellar1997} or 
\cite[Satz~3.23]{ruzicka2004}. 

Furthermore, we notice that $f_x x = \text{sgn} (x) x = |x|$ as well as $|f_x | 
\leq 1$ for every $x \in \R$ and $f_x \in f(x)$. This shows that $f$ fulfills all 
the conditions of Assumption~\ref{ass:f}. It remains to verify 
Assumption~\ref{ass:NochettoIneq}. Since $f$ is the subdifferential of $\Phi$ 
the variational inequality \eqref{eq1:varineq} is still satisfied in the sense
that 
\begin{align*}
  f_x(y-x) \leq  \Phi(y) - \Phi(x)
\end{align*}
for all $x,y \in \R$ and $f_x \in f(x)$. Following the same steps as in the
proof of Lemma~\ref{lem:NochettoIneq} but replacing $f(v)$, $f(w)$, 
and $f(z)$ by arbitrary elements $f_v \in f(v)$, $f_w \in f(w)$, 
and $f_z \in f(z)$, respectively, shows that 
Assumption~\ref{ass:NochettoIneq} is fulfilled.
Therefore, the backward Euler--Maruyama method \eqref{eq1:multiEuler} 
is well-defined and yields an approximation of the exact solution $X$ of 
\begin{align*}
  \begin{cases}
    \diff{X(t)} + f(X(t)) \diff{t} \ni b(X(t)) \diff{t} + g(X(t)) \diff{W(t)}, 
    \quad t \in (0,T],\\
    X(0) = X_0,
  \end{cases}
\end{align*}
where $b \colon \R \to \R$ and $g \colon \R \to \R^{1,m}$ are Lipschitz 
continuous and $X_0 \in L^2(\Omega)$. To be more precise, the
piecewise linear 
interpolant $\X$ of the values $(X^n)_{n\in \{0,\dots,N\}}$ defined in 
\eqref{eq6:Xlin} fulfills
\begin{align*}
  \max_{t \in [0,T]} \| X(t) - \X(t) \|_{L^2(\Omega)}
  \leq C k^{\frac{1}{4}}
\end{align*}
for $C \in (0,\infty)$ that does not depend on the step size $k = 
\frac{T}{N}$. However, let us mention that the strong order of convergence of
$1/4$ is not necessarily optimal in this particular example. We refer the
reader to \cite{dareiotis2018} for a corresponding result on the forward
Euler--Maruyama method.

\subsection{Stochastic $p$-Laplace equation}

As a second example, we consider the discretization of the
stochastic $p$-Laplace equation. A 
similar setting is studied in \cite{BreitHofmanova2019}. For a more detailed 
introduction to this class of problems, we refer the reader to this work and 
the references therein.

For $p \in [2,\infty)$ and $T \in (0,\infty)$
the stochastic $p$-Laplace equation is given by
 \begin{align}
  \label{eq7:pLap}
  \begin{cases}
    \diff{}u(t,\xi) - \nabla \cdot \big(|\nabla u(t,\xi)|^{p-2} \nabla u(t,\xi)\big)
    \diff{t} = \Psi(u(t,\xi)) \diff{W(t)},\hspace{-3cm}\\
    &\text{for all } (t,\xi) \in (0,T)\times \D,\\
    u(t,\xi) = 0, &\text{for all } (t,\xi) \in (0,T) \times \partial \D,\\
    u(0,\xi) = u_0(\xi), &\text{for all }  \xi \in \D,
  \end{cases}
\end{align}
where $\D \subset \R^n$, $n \in \N$, is a bounded Lipschitz domain.
By $W \colon [0,T] \times \Omega \to \R^m$, $m \in \N$, we denote a 
standard
$(\F_t)_{t \geq 0}$-adapted Wiener process. We also assume that the initial 
value $u_0 \colon \D \times \Omega \to \R$ fulfills
\begin{align}
  \label{eq7:ini}
  \E \big[ \|u_0\|_{L^2(\D)}^2\big] = \E \Big[ \int_\D |u_0|^2 \diff{\xi}
  \Big] < \infty.
\end{align}
Furthermore, let $\Psi \colon \R \to \L_2(\R^m;\R)$ be a Lipschitz 
continuous mapping, where $\L_2(\R^m;\R)$ denotes the space of 
Hilbert--Schmidt operators from $\R^m$ to $\R$.
Note that the Nemytskii operator $\tilde{\Psi} \colon L^2(\D) \to 
\L_2(\R^m;L^2(\D))$, given by $[\tilde{\Psi}(u)](x) = \Psi(u(x))$ for
$u \in L^2(\D)$, is also 
Lipschitz continuous and will be of importance in the weak formulation 
below.

Further, let $W^{1,p}_0(\D)$ be the Sobolev space of weakly differentiable 
and $p$-fold integrable functions on $\D$ with vanishing trace on the boundary
$\partial \D$, see \cite[Section~1.2.3]{roubicek.2013} or
\cite[Section~4.5]{Winkert2018} for a precise definition. The dual space of
$W_0^{1,p}(\D)$ is denoted by $W^{-1,p}(\D)$ in the following.
Then, the stochastic $p$-Laplace equation \eqref{eq7:pLap} has a 
solution $(u(t))_{t \in [0,T]}$ which is progressively measurable and an
element of $L^2(\Omega; C([0,T]; L^2(\D))) \cap L^p(\Omega;
L^p(0,T;W_0^{1,p}(\D)))$. For further details we refer to
\cite[Example~4.1.9, Theorem~4.2.4]{LiuRoeckner2015}.

For a spatial discretization of \eqref{eq7:pLap}, we use a family of 
finite element spaces $(V_h)_{h>0}$ such that $V_h \subset W_0^{1,p}(\D)$ 
for every $h >0$. Hereby, we interpret  $h$ as a spatial refinement 
parameter. In the following, we consider a fixed parameter value $h >0$.
By $d \in \N$ we then denote the dimension of the space $V_h$.

The spatially semi-discrete problem is to find
a progressively measurable stochastic process $(u_h(t))_{t \in [0,T]}$ 
in the space $L^2(\Omega; C([0,T]; L^2(\D))) \cap 
L^p(\Omega; L^p(0,T;V_h))$ such that 
\begin{align}\label{eq7:FEM}
  \begin{split}
    \int_{\D} u_h(t) v_h \diff{\xi} 
    + \int_{\D} \int_{0}^{t} |\nabla u_h(s)|^{p-2} \nabla u_h(s) \cdot \nabla 
    v_h 
    \diff{s} \diff{\xi}\\
    = \int_{\D} P_h u_0 v_h \diff{\xi} + \int_{\D} \int_{0}^{t} P_h 
    \tilde{\Psi}(u_h(s)) \diff{W(s)} v_h \diff{\xi}
  \end{split}
\end{align}
for every $v_h \in V_h$ and $t\in [0,T]$. Hereby, $P_h \colon L^2(\D) \to V_h$
is the $L^2(\D)$-orthogonal projection onto $V_h$.

In order to apply our results from the previous sections, we rewrite
\eqref{eq7:FEM} as a problem in $\R^d$. To this end, we consider a one-to-one
relation between $V_h$ and $\R^d$ given by
\begin{align}\label{eq7:idRdH}
  v_x = \sum_{i=1}^{d} x_i \varphi_i \in V_h \quad \text{for } x =
  [x_1,\ldots,x_d]^{\top} \in \R^d
\end{align}
for a basis $\{\varphi_1,\dots,\varphi_d\}$ of $V_h$. 
Through \eqref{eq7:idRdH} we induce additional norms on $\R^d$ which are
given by
\begin{align*}
  \|x\|_1 := \|v_x\|_{W_0^{1,p}(\D)},
  \quad
  \|x\|_0 := \|v_x\|_{L^2(\D)},
  \quad
  \|x\|_{-1} := \|v_x\|_{W^{-1,p}(\D)},
\end{align*}
for every $x \in \R^d$. Observe that the norm $\|\cdot\|_0$ 
is also induced by the inner product 
\begin{align*}
  \inner[0]{x}{y} := \inner[L^2(\D)]{v_x}{v_y} = \inner{M_h x}{y}, \quad 
  \text{with } M_h = (\inner[L^2(\D)]{\varphi_i}{\varphi_j})_{i,j \in \{1,\dots,d\}},
\end{align*}
where the mass matrix 
$M_h$ is symmetric and positive definite.
Since all norms on $\R^d$ are equivalent, for each $i \in \{-1,0,1\}$ there exists 
$c_i, C_i \in (0,\infty)$ such that
\begin{align*}
  c_i \|x\|_i \leq |x| \leq C_i \|x\|_i
\end{align*}
for all $x \in \R^d$. 

The $p$-Laplace operator in the spatially semi-discrete problem
\eqref{eq7:FEM} can be written as $A_h \colon V_h \to V_h$ which is implicitly
defined by
\begin{align*}
  \inner[L^2(\D)]{A_h(v_h)}{w_h}
  = \int_{\D} |\nabla v_h|^{p-2} \nabla v_h \cdot \nabla w_h \diff{\xi}
\end{align*}
for all $v_h, w_h \in V_h$. By the same arguments as in
\cite[Example~4.1.9]{LiuRoeckner2015} one can easily verify that $A_h$
fulfills 
\begin{align*}
  &\inner[L^2(\D)]{A_h (v_h) - A_h (w_h)}{v_h - w_h} \geq 0,\\
  &\inner[L^2(\D)]{A_h (v_h)}{v_h} = \|v_h\|_{W_0^{1,p}(\D)}^p, \quad
  \|A_h (v_h)\|_{W^{-1,p}(\D)} \leq \|v_h\|_{W_0^{1,p}(\D)}^{p-1}
\end{align*}
for all $v_h, w_h \in V_h$.
Then, for $x, y \in \R^d$ and associated $v_x, v_y \in V_h$, 
we introduce mappings $\tilde{f} \colon \R^d \to \R^d$ and
$\tilde{g} \colon \R^d \to \R^{d,m}$ implicitly by
\begin{align*}
  \sum_{i=1}^{d} [\tilde{f}(x)]_i \varphi_i = A_h (v_x), \quad
  \sum_{i=1}^{d} [\tilde{g}(x)z ]_i \varphi_i = P_h \tilde{\Psi}(v_x) z,\quad
  \sum_{i=1}^{d} [X_0]_i \varphi_i = P_h u_0
\end{align*}
for $z \in \R^m$ and use these functions to define $f(x) := M_h \tilde{f}(x)$ 
as well as $g(x) := M_h^{\frac{1}{2}}\tilde{g}(x)$ for every $x \in \R^d$.
As we assumed that $v_x \mapsto \tilde{\Psi} (v_x )$ is Lipschitz 
continuous, 
there exists $L_g \in (0,\infty)$ such that
\begin{align*}
  |g(x) - g(y) |^2 
  &= \sum_{j=1}^{m} | M_h^{\frac{1}{2}}\tilde{g}(x)e_j - 
  M_h^{\frac{1}{2}}\tilde{g}(y)e_j|^2\\
  &= \sum_{j=1}^{m} \| P_h \tilde{\Psi}(v_x)e_j - P_h \tilde{\Psi}(v_y)e_j 
  \|_{L^2(\D)}^2\\
  &= \| P_h \tilde{\Psi}(v_x) - P_h \tilde{\Psi}(v_y) \|_{\L_2(\R^m;L^2(\D))}^2\\
  &\leq L_g^2 \| v_x - v_y \|_{L^2(\D)}^2
  \leq \frac{L_g^2}{c_0^2} | x - y |^2
\end{align*}
for $x,y \in \R^d$ and $v_x,v_y \in V_h$ fulfilling \eqref{eq7:idRdH} and
an orthonormal basis $\{e_j\}_{j \in \{1,\dots,m\}}$ of $\R^m$.
Thus, $g$ fulfills Assumption~\ref{ass:g}. 
Due the integrability condition to \eqref{eq7:ini} for $u_0$, it follows that 
$X_0$ fulfills Assumption~\ref{ass:X0}.

Moreover, we see that $f$ is monotone, coercive, and bounded as we can 
write
\begin{align*}
  \inner{f(x) - f(y)}{x - y}
  &= \inner[0]{\tilde{f}(x) - \tilde{f}(y)}{x - y}\\
  &=\sum_{i=1}^{d} \sum_{j=1}^{d} \big([\tilde{f}(x)]_i - [\tilde{f}(y)]_i\big) 
  \big(x_j - y_j\big)
  \inner[L^2(\D)]{\varphi_i }{\varphi_j}\\
  &= \inner[L^2(\D)]{A_h (v_x) - A_h (v_y)}{v_x - v_y}
  \geq 0
\end{align*}
as well as
\begin{align*}
  \inner{f(x)}{x}
  =\sum_{i=1}^{d} \sum_{j=1}^{d} [\tilde{f}(x)]_i x_j
  \inner[L^2(\D)]{\varphi_i }{\varphi_j}
  = \inner[L^2(\D)]{A_h (v_x)}{v_x}
  = \|x\|_1^p
  \geq C_1^{-p} |x|^p
\end{align*}
and
\begin{align*}
  |f(x) | &\leq \|M_h\|_{\L(\R^m)} |M_h^{-1} f(x) |
  \leq C_{-1} \|M_h\|_{\L(\R^m)} \| \tilde{f}(x) \|_{-1} \\
  &= C_{-1} \|M_h\|_{\L(\R^m)} \|A_h (v_x) \|_{W^{-1,p}(\D)} 
  \leq C_{-1} \|M_h\|_{\L(\R^m)} \|v_x\|_{W^{1,p}(\D)}^{p-1} \\
  &= C_{-1} \|M_h\|_{\L(\R^m)} \| x \|_1^{p-1}
  = \frac{C_{-1}}{c_1^{p-1}} \|M_h\|_{\L(\R^m)} |x|^{p-1}
\end{align*}
for all $x,y \in \R^d$ and $v_x,v_y \in V_h$ fulfilling \eqref{eq7:idRdH}. Here, 
$\|\cdot\|_{\L(\R^m)}$ denotes the matrix norm in $\R^m$ which is induced 
by $|\cdot|$.
Therefore, Assumption~\ref{ass:f} is satisfied. To prove 
that $f$ fulfills Assumption~\ref{ass:NochettoIneq} we note that
the mapping $\Phi \colon V_h \to [0,\infty)$ given by
\begin{align*}
  \Phi (v_h) = \frac{1}{p} \int_{\D} |\nabla v_h|^p \diff{\xi}, \quad v_h \in
  V_h,
\end{align*}
is a potential of $A_h$, compare \cite[Example~4.23]{roubicek.2013}. Since
$\Phi$ is convex it follows that
\begin{align*}
  \Phi (v_h) \geq \Phi (w_h) + \inner[L^2(\D)]{A_h(w_h)}{v_h - w_h},
  \quad \text{ for all } v_h, w_h \in V_h,
\end{align*}
where we use \cite[Kapitel~III,~Lemma~4.10]{Gajewski.1974}.
In the same way as in Lemma~\ref{lem:NochettoIneq} we obtain that 
\begin{align*}
  \inner[L^2(\D)]{A_h(v_x) - A_h(v_y)}{v_y - v_z}
  \leq \inner[L^2(\D)]{A_h(v_x) - A_h(v_z)}{v_x - v_z}
\end{align*}
for all $v_z,v_x,v_y \in V_h$. Applying the definition of $f$, we then get
\begin{align*}
  \inner{f(x) - f(y)}{y - z}
  &= \inner[L^2(\D)]{A_h(v_x) - A_h(v_y)}{v_y - v_z}\\
  &\leq \inner[L^2(\D)]{A_h(v_x) - A_h(v_z)}{v_x - v_z}
  = \inner{f(x) - f(z)}{x - z}
\end{align*}
for $x,y,z \in \R^d$ and $v_x,v_y,v_z \in V_h$ fulfilling \eqref{eq7:idRdH}.
This shows that $f$ also fulfills Assumption~\ref{ass:NochettoIneq}. 

Consequently, the results of the previous sections are applicable. More 
precisely, the backward Euler scheme \eqref{eq1:multiEuler} has a unique 
solution $(X^n)_{n\in \{0,\dots,N\}}$ (cf.~Theorem~\ref{thm:existence_multi}).
Theorem~\ref{thm:conv} then states that the piecewise linear interpolant $\X$
of the values $(X^n)_{n\in \{1,\dots,N\}}$ defined in \eqref{eq6:Xlin} fulfills
\begin{align*}
  \max_{t \in [0,T]} \| X(t) - \X(t) \|_{L^2(\Omega;\R^d)}
  \leq C k^{\frac{1}{4}}
\end{align*}
for $C \in (0,\infty)$ that does not depend on the step size $k$ where $X$ 
is the solution to the single-valued stochastic differential equation
\begin{align*}
  \begin{cases}
    \diff{X(t)} + f(X(t)) \diff{t} = g(X(t)) \diff{W(t)}, \quad t \in
    (0,T],\\
    X(0) = X_0.
  \end{cases}
\end{align*}

Observe that our proof does not yet rule out that the constant $C$ above
depends on the dimension $d$ of the finite element space $V_h$. Hence, this is
not a complete analysis of a full discretization of the stochastic partial
differential equation \eqref{eq7:pLap} and a more detailed analysis is subject
to future work. We refer to \cite{BreitHofmanova2019} for a related result in
this direction. 

Let us emphasize that, unlike the results in \cite{BreitHofmanova2019},
we do not have to impose any temporal regularity assumption on the 
exact solution of \eqref{eq7:pLap} or on the solution of the semi-discrete
problem \eqref{eq7:FEM}. Since such regularity conditions are often not easily
verified for quasi-linear stochastic partial differential equations
we are confident that our approach could lead to 
interesting new insights in the numerical analysis of such infinite dimensional
problems.

\section*{Acknowledgment}

ME would like to thank the Berlin
Mathematical School for the financial support. RK also gratefully 
acknowledges
financial support by the German Research Foundation (DFG) through the 
research
unit FOR 2402 -- Rough paths, stochastic partial differential equations and
related topics -- at TU Berlin.

\end{document}